\newcommand*\circled[1]{\tikz[baseline=(char.base)]{
   \node[shape=circle,draw,inner sep=1pt] (char) {#1};}}
\newtheorem{theorem}{Theorem}
\theoremstyle{plain}
\newtheorem{conjecture}{Conjecture}
\newtheorem{corollary}{Corollary}
\newtheorem{definition}{Definition}
\newtheorem{lemma}{Lemma}
\newtheorem{proposition}{Proposition}
\newtheorem{remark}{Remark}
\numberwithin{equation}{section}
\begin{document}

\title[Periods of a max-type equation]{Periods of a max-type equation } 
\author{A. Linero Bas and D. Nieves Rold\'an}
\begin{abstract}
We consider the max-type equation $$x_{n+4}=\max\{x_{n+3},x_{n+2},x_{n+1},0\}-x_n,$$ with arbitrary real initial conditions. We describe completely its set of periods $\mathrm{Per}(F_4)$, as well as its associate periodic orbits. We also prove that there exists a natural number $N\notin\mathrm{Per}(F_4)$  for which 
$$\left\{N+m:m\geq 1,m\in\mathbb N\right\}\subset\mathrm{Per}(F_4).$$ 
\end{abstract}
\maketitle
Keywords: Max-type difference equations; periodic solution; periodic structure; rational dependence; coin problem; oriented graph.\newline
Mathematics Subject Classification: 39A23; 11A99; 37E99.\newline
This is a draft. The final version will be published in Journal of Difference Equations and Applications.

\section{Introduction}

In the last decades the interest on the behaviour of the solutions of max-type difference equations has rapidly grown. As a first comment, let us mention that they can be related to some piecewise linear difference equations, see \cite{Lozi}, by means of suitable changes of variables. With this procedure, from a piecewise model, we find the well-known max-type generalized Lyness difference equation, $$x_{n+1} = \frac{\max\{ x_n^k, A\}}{x_n^{\ell}x_{n-1}}, $$
where $A$ is a positive real number and the exponents $k$ and $\ell$ are integer numbers (for details, consult \cite{Ladas95} and \cite{GroveL05}).

That equation motivated a lot of research about the behaviour of its solutions for different values of the parameters involved. Indeed, mathematicians studied its invariants, boundedness, persistence, convergence, stability or its oscillatory behaviour (for example, see \cite{JanKLS95}, \cite{JanKLT98} and \cite{Feuer03}, as well as \cite{GroveL05} for a general amount of information).

Encouraged by those results, other max-type difference equations were studied. It should be highlighted the reciprocal difference equation with maximum, see \cite{Ladas96}, namely $$ x_{n+1} = \max \left\{ \frac{A_0}{x_n}, \dots, \frac{A_k}{x_{n-k}} \right\}, $$

\noindent where the coefficients $A_j$, $j=0,\ldots,k$, are positive real numbers, which stands out for the eventual periodicity of its solutions (for example, see \cite{CranstonK13} and \cite{BidwellF08}). Moreover, modifications of the reciprocal difference equation have been made by considering powers, periodic coefficient or changes of variables (\cite{Stevic08}, \cite{Stevic09}). Furthermore, the idea of max-type difference equations has been even generalized to rank-type difference equations, see \cite{Sauer11} and \cite{BerryS12}. 

This type of equations have applications in automatic control theory, for instance, when the law of regulation of certain systems depends on the maximum values of some regulated state parameters over certain time intervals. According to \cite{Bainov}, in \cite{Popov} we can find a model of a system for regulating the voltage of a generator of constant current with parallel simulation and the regulated quantity was the voltage at the source electric current. The equation describing the work of the regulator involves the maximum of the unknown function ant it has the form: $$T_0u'(t) + u(t) + q\cdot \max_{s\in[t-h,t]} u(s) = f(t), $$
 
\noindent where $T_0$ and $q$ are constants characterizing the object, $u(t)$ is the regulated voltage and $f(t)$ is the perturbed effect. Notice that the discretization of the problem yields to a max-type difference equation.    

The reader interested in this type of difference equations, as well as different characteristics of its dynamics,  may consult the monograph \cite{GroveL05}.

In this paper, we focus our attention on the possible periodic character of the solutions of 
\begin{equation}\label{Eq:Gk}
	x_{n+k}=\max\{x_{n+k-1},x_{n+k-2},\ldots, x_{n+1},0\}-x_n,
\end{equation}
where the initial conditions are real numbers.

First, let us mention that one of the first appearances of Equation~(\ref{Eq:Gk}) occurred in \cite{Golomb92}, where M. Golomb raised the problem of proving that if the initial conditions are monotonic, then the sequence $\left(x_n\right)_n$ -generated by iteration of (\ref{Eq:Gk})- is periodic of period $3k-1$. The solution was provided by David Callan among others. Notice that if $k=2$ or $k=3$, then the corresponding equations are globally periodic, of periods $5$ and $8$, respectively, that is, all the solutions are periodic and the least common multiple of its periods are $5$ and $8$, respectively.   
	
Next, some properties and generalities for Equation~(\ref{Eq:Gk}) were established by Cs\"{o}rnyei and Laczkovich in \cite{CsLa2001}. For instance, they prove that every solution of Equation (\ref{Eq:Gk}) is bounded and that the equation is not globally periodic, that is, it is possible to find appropriate initial conditions for which the generated sequence is not periodic. Nevertheless, they mention that the set of periods is unbounded. 
							
As a consequence, a natural question arises: to analyze the structure of the set of periods. As a first step, being already known the situation for the cases $k=2$ and $k=3$, we restrict our attention to $k=4$, 
\begin{equation}\label{Eq:G4}
	x_{n+4}=\max\{x_{n+3},x_{n+2}, x_{n+1},0\}-x_n.
\end{equation}
The main goal of this paper is to provide a complete description of the set of periods of~(\ref{Eq:G4}). 
Let us mention that in \cite{Barb} a few aspects on the periodicity of this recursion were established. The authors comment that for $k\geq 4$, the situation ``becomes more complicated, and the sequence could be periodic with a variety of periods, or nonperiodic''. Moreover, they remark that, in the region $0\leq x_1\leq x_2\leq \ldots\leq x_k$, the sequence has a ``universal period'', that is, $3k-1$. But, as far as we know, the question of determining the elements of the set of periods remained unsolved. In this work, we will prove that the set of periods of Equation~(\ref{Eq:G4}) is precisely
$$\left\{1,8,11\right\}\bigcup \left\{10\cdot a + 11\cdot b \ | \gcd(a,b)=1, a\geq 1, b\geq 2a+1 \right\}.$$
Additionally, apart from the fact that the set of periods is unbounded,  we will prove that $1674$ is the biggest natural number not included in it.

The paper is organized as follows. In Section~\ref{S:pre}, we present the basic notions and give some preliminary results, both for the general Equation (\ref{Eq:Gk}) and for the particular (\ref{Eq:G4}). Next, in Section~\ref{S:PerF4}, we will focus on Equation~(\ref{Eq:G4}), and  show the different possible ways in order to obtain periodic sequences, where the strategy consists of translating  the problem into a graph framework.  Jointly with the description of the periodic orbits, we obtain the set of periods of (\ref{Eq:G4}), as well as possible realizations of such periods. Then, a question arises, to determine whether there exists some natural number $N$ such that $n$ is automatically a period if $n\geq N$. This is outlined in Section~\ref{S:Maximum}, where such a number is found (the complete proof of the result can be consulted in \cite{LiNi}). Finally, in the last section we present some comments and several open problems for the corresponding generalized Equation (\ref{Eq:Gk}).
				
\section{Preliminaries}\label{S:pre}
In general, let $x_{n+k}=f(x_{n+k-1},\ldots,x_{n+1},x_{n})$ be an autonomous difference equation of order $k$, where $f:\Omega\subseteq X^k\rightarrow X$ is defined on some subset $\Omega$ of a finite Cartesian product of a set $X$. A solution $(x_n)$ is called periodic if $x_{n+m}=x_n$ for all $n\geq 1$ and some $m\in \mathbb{N}:=\{1,2,3, \ldots\}$. The smallest of such values $m$ is called the period of the solution. If there exists $n_0\geq 1$ such that $x_{n+m}=x_n$ for all $n\geq n_0$ and $m \in \mathbb{N}$, we say that the sequence is eventually periodic. 

For the particular case of~Equation (\ref{Eq:Gk}), we use $\mathrm{Per}(F_k)$ to denote its set of periods, $k\geq 2$. Our main task in this paper deals with the search of this set of periods. As a precedent, it is a well-known fact (see \cite{Barb}) that  $\mathrm{Per}(F_2)=\{1,5\}$, and $\mathrm{Per}(F_3)=\{1, 2, 8\}$. For~(\ref{Eq:G4}) the situation changes, the solutions are not necessarily periodic as was shown in \cite{CsLa2001}. Even, in the same reference, the unbounded character of $\mathrm{Per}(F_4)$ was proved. However, nothing is said about a complete description of the set of periods.

Before concentrating in Equation~(\ref{Eq:G4}), we give some properties for the general case. After this, our task will be focused on the case $k=4$. 

\subsection{Properties for the general case} The first observation which merits our attention is the fact that every eventually periodic sequence generated by Equation~(\ref{Eq:Gk}) is, in fact, periodic. To this end, realize that 
\begin{equation}\label{Eq:GkInversa}
x_n=\max\{x_{n+k-1},x_{n+k-2},\ldots, x_{n+1},0\}-x_{n+k}
\end{equation} can be viewed as the inverse difference equation linked to Equation~(\ref{Eq:Gk}). Or, in other terms, notice that 
$$F(x_1,x_2,\ldots,x_{k-1},x_k):=\left(x_2,x_3,\ldots,x_k,\max\left\{x_2,x_3,\ldots,x_k,0\right\}-x_1\right)$$ is a bijective map from $\mathbb{R}^k$ into itself, whose inverse is given by $$F^{-1}(y_1,y_2,\ldots,y_{k-1},y_k)=\left(\max\left\{y_1,y_2,\ldots,y_{k-1},0\right\}-y_k,y_2,y_3,\ldots,y_k\right).$$ 

\begin{proposition} \label{Prop_eventually}
Every eventually periodic sequence of Equation~(\ref{Eq:Gk}) is periodic.
\end{proposition}

The following result is immediate and tells us that there exists a (unique) fixed point for~(\ref{Eq:Gk}). 
\begin{proposition} \label{Prop_equi}
The unique equilibrium point of~(\ref{Eq:Gk}) is $\overline{x}=0$. In particular, $1\in\mathrm{Per}(F_k)$ for all $k\geq 2$.
\end{proposition}

In general, we can give a few results for the periods of~(\ref{Eq:Gk}).

\begin{proposition}\label{Remark_Per2} For Equation~(\ref{Eq:Gk}), it holds:
	\begin{itemize}
		\item[(a)] $2k\in\mathrm{Per}(F_k)$ for all $k\geq 4$.
		\item[(b)] For $k\geq 3$, $2\in\mathrm{Per}(F_k)$ if and only if $k$ is odd; a sequence $(x_n)$ has period $2$ if and only if for some $a>0$, either $x_{2j-1}=a$ and $x_{2j}=0$ for all $j\geq 1$, or $x_{2j-1}=0$ and $x_{2j}=a$ for all $j\geq 1$.  
	\end{itemize}
\end{proposition}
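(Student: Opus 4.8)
The plan is to treat the two parts separately, reducing each to an explicit finite computation.

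For part (b), I would first note that a period-$2$ solution is completely determined by two values, say $x_{2j-1}=a$ and $x_{2j}=b$ for all $j$, so the task is to find all admissible pairs $(a,b)$ with $a\neq b$. Since $k\geq 3$, any window of $k-1\geq 2$ consecutive terms $x_{n+1},\ldots,x_{n+k-1}$ contains both an odd- and an even-indexed term, hence $\max\{x_{n+1},\ldots,x_{n+k-1},0\}=\max\{a,b,0\}$ for every $n$. Substituting into~(\ref{Eq:Gk}) gives $x_{n+k}=\max\{a,b,0\}-x_n$, and I would then split on the parity of $k$. If $k$ is even then $x_{n+k}=x_n$, forcing $2x_n=\max\{a,b,0\}$ for all $n$; this makes $x_n$ constant, whence $a=b=0$ and no genuine period-$2$ orbit exists. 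If $k$ is odd then $x_{n+k}$ equals the value of opposite parity, and both the odd and the even case collapse to the single scalar equation $a+b=\max\{a,b,0\}$. A short case analysis of this equation, according to whether the maximum is $a$, $b$, or $0$, shows that apart from $a=b=0$ the only solutions are $(a,b)=(a,0)$ with $a>0$ and $(a,b)=(0,b)$ with $b>0$; conversely each of these is readily checked to satisfy~(\ref{Eq:Gk}). This yields exactly the two families in the statement and proves $2\in\mathrm{Per}(F_k)$ if and only if $k$ is odd.

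For part (a), I would exploit the positive homogeneity of~(\ref{Eq:Gk}) to look for a period-$2k$ orbit taking only the values $0$ and $1$, viewed cyclically on $\mathbb{Z}/2k\mathbb{Z}$. Rearranged, the recurrence reads $x_n+x_{n+k}=\max\{x_{n+1},\ldots,x_{n+k-1},0\}$, whose right-hand side lies in $\{0,1\}$. I would search within the special family of $0/1$ words satisfying $x_{n+k}=1-x_n$ for all $n$, so that the pattern and its $k$-shift are complementary; for such words the left-hand side is identically $1$, and the recurrence holds precisely when every block of $k-1$ consecutive entries contains a $1$, i.e. when there is no run of $k-1$ consecutive zeros. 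I would then exhibit an explicit admissible word: take the first $k$ entries to be $(x_1,\ldots,x_k)=(1,0,1,1,\ldots,1)$ and extend by $x_{n+k}=1-x_n$. A direct inspection shows the zero positions are $2$, $k+1$ and $k+3,\ldots,2k$, so the longest run of zeros has length exactly $k-2<k-1$; hence both requirements hold and the word genuinely solves~(\ref{Eq:Gk}).

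It remains to confirm that the period is exactly $2k$ and not a proper divisor, and this is the step I expect to be the most delicate. The rule $x_{n+k}=1-x_n$ already forbids any period dividing $k$ (as $1-x_n\neq x_n$), but it does not by itself exclude the other proper divisors of $2k$. To do so I would use that the maximal zero-run, of length $k-2\geq 2$, occurs exactly once in a full cycle while every other zero-run has length $1$; since a period $p<2k$ dividing $2k$ would force this strictly-longest run to reappear $2k/p\geq 2$ times, it is impossible, and the orbit has minimal period $2k$. This gives $2k\in\mathrm{Per}(F_k)$ for all $k\geq 4$. The main work throughout is the bookkeeping in part (a): setting up the complementary-shift reformulation correctly and verifying the zero-run lengths, and the arguments are uniform in $k$ precisely because $k-2\geq 2$ makes the longest run unique.
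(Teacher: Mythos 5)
Your proof is correct in both parts, and it ends up constructing for part (a) the very same periodic orbit as the paper, but verified and shown minimal by different means. For part (b) your argument is essentially the paper's: the key point in both is that any window of $k-1\geq 2$ consecutive terms of a $2$-periodic sequence contains both values, so the recurrence collapses to $x_{n+k}=\max\{a,b,0\}-x_n$, and parity of $k$ decides everything; the only difference is that you carry out the case analysis of $a+b=\max\{a,b,0\}$ explicitly, whereas the paper declares the characterization ``immediate'' and omits it, so your write-up is the more complete of the two. For part (a), the paper takes initial conditions $(0,x,0,x,x,\ldots,x)$, computes the iterates directly, and rules out proper periods by excluding $q\in\{1,2,k-1,k\}$ (using $(k-1)\nmid 2k$ for $k\geq 4$) and then noting that any remaining divisor $q\leq k-2$ would be swallowed by the constant block $(x_4,\ldots,x_{k+1})=(x,\ldots,x)$, forcing the sequence to be constant. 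Your word $(1,0,1,1,\ldots,1)$ extended by $x_{n+k}=1-x_n$ is exactly that orbit shifted by one position, but your route to it is different: you first characterize all $0/1$ solutions satisfying the complementarity $x_{n+k}=1-x_n$ as cyclic words with no run of $k-1$ zeros, which reduces the verification of the recurrence to a run-length check, and you then obtain minimality from the uniqueness of the longest zero-run (length $k-2\geq 2$, which is where $k\geq 4$ enters). This buys two things: the run criterion describes a whole family of period-$2k$ solutions rather than a single example, and the uniqueness-of-longest-run argument kills all proper divisors of $2k$ at once, avoiding the paper's divisor-by-divisor exclusion (including its slightly glossed-over step that every divisor of $2k$ other than $k$ and $2k$ is at most $k-2$). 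The paper's computation, in exchange, is more elementary and self-contained.
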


\begin{proof}
(a) Consider $\left(x_1,x_2,\ldots,x_k\right)=(0,x,0,x,x,\ldots,x)$, with $x>0$  as the initial conditions. Then, applying Equation~(\ref{Eq:Gk}) we obtain $0,x,0,x,x,\ldots,x,$ $\,\hspace{1mm}  x,0,x,0,0,\ldots, 0,$ $\,\hspace{1mm} 0, x, 0, x, x,\ldots, x, \dots, $
as can be easily checked. Being $(x_n)$ a periodic sequence, denote by $q$ its period (observe that $q|2k$). Obviously, $q\notin\{1,2,k\}$ as $x>0$. Even, $q\neq k-1$, because $(k-1)\nmid 2k$ for $k\geq 4$. If $q=2k$, we finish the proof. Otherwise, since $q \leq k-2$ and $(x_{4},x_5,\ldots, x_k,x_{k+1})=(x,x,\ldots,x,x)$ is a string of the periodic sequence with length $k-2$, we would deduce that, in fact, $(x_n)=(x)_n$, which would give $x=0$, contrarily to our hypothesis.  Therefore, $q=2k$.  
							
(b) If $k$ is odd, $k\geq 3$, take initial conditions $(x,0,x,0,\ldots,x,0,x,0,x),$ with $x>0$. Then it is straightforward to check that these initial conditions generate a periodic sequence $(x_n)$ of order $2$. 

If $k=2m$ is even, then $2\not\in\mathrm{Per}(F_{2m})$. For if $(x_1,x_2,\ldots,x_{2m-1},x_{2m})=(a,b,\ldots,a,b)$ are initial conditions providing the two-periodic sequence, with $a\geq b$, then $a=x_1=x_{2m+1}=\max\{a,b,0\}-a=\max\{a,0\}-a$, so $2a=\max\{a,0\}$, which implies $a=0$; similarly, $b=x_2=x_{2m+2}=\max\{a,b,0\}-b=0-b=-b,$ hence $b=0$. Therefore $a=b=0$ and we would obtain the equilibrium point $\overline{x}=0$, a contradiction.  

The proof of the characterization of two-periodic sequences is immediate and we omit it.
\end{proof}

\begin{proposition}\label{P:maxNoNeg}
Let $\left(x_n\right)_n$ be a periodic solution of Equation~(\ref{Eq:Gk}) of period $p$. Then $m=\max\left\{x_j:j=1,\ldots,p\right\} \geq 0$. Additionally, if $m=0$, then the solution is $\mathbf{\overline{0}}=(0,0,0,0,\ldots)$.
\end{proposition}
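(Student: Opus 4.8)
The plan is to exploit the one feature of Equation~(\ref{Eq:Gk}) that makes the sign analysis tractable: the constant $0$ sitting inside the maximum. The key observation is that \emph{whenever all the relevant terms of the sequence are non-positive, the maximum in~(\ref{Eq:Gk}) is forced to equal $0$}, so the recursion collapses to the purely linear relation $x_{n+k}=-x_n$. Both assertions will then follow by feeding this simplified relation back into itself. Note first that, since $(x_n)_n$ is periodic of period $p$, the quantity $m=\max\{x_j:j=1,\ldots,p\}$ is genuinely the maximum of the orbit over all indices, and it is attained.

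For the first claim I would argue by contradiction. Suppose $m<0$, so that $x_j<0$ for every index $j$. Then for each $n$ the set $\{x_{n+k-1},\ldots,x_{n+1},0\}$ has maximum equal to $0$, whence~(\ref{Eq:Gk}) gives $x_{n+k}=-x_n$. But $x_n<0$ forces $x_{n+k}=-x_n>0$, contradicting that every term is negative. Hence $m\geq 0$.

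For the additional statement, assume $m=0$, so that $x_j\leq 0$ for all $j$. Exactly as above, the maximum in~(\ref{Eq:Gk}) equals $0$ for every $n$, and therefore $x_{n+k}=-x_n$ for all $n\geq 1$. A sign comparison now finishes the argument: the left-hand side satisfies $x_{n+k}\leq 0$ while the right-hand side satisfies $-x_n\geq 0$, so the equality $x_{n+k}=-x_n$ can hold only if both sides vanish. Consequently $x_n=0$ for every $n$, i.e.\ the solution is $\mathbf{\overline{0}}$.

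I do not anticipate a genuine obstacle here; the only point requiring a little care is the justification that the maximum inside~(\ref{Eq:Gk}) really reduces to $0$, which is immediate once one records that periodicity makes $m$ the global maximum of the orbit. The whole proof rests on turning the nonlinear max-recursion into the linear recursion $x_{n+k}=-x_n$ on the region where all coordinates are non-positive, and then reading off the incompatible signs.
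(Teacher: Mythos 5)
Your proof is correct and takes essentially the same approach as the paper: the key mechanism in both is that when every term of the (periodic) sequence is non-positive, the maximum in the recursion collapses to $0$, yielding $x_{n+k}=-x_n$, which is incompatible with the signs unless everything vanishes. The paper applies this at the single index where the maximum is attained and leaves the $m=0$ case as immediate, whereas you run it for all $n$ and write out the $m=0$ case explicitly, but this is the same argument.
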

				
\begin{proof}
Without loss of generality, reordering the periodic sequence if necessary, we can assume that $x_1=\max\left\{x_j:j=1,\ldots,p\right\}$. Suppose that $x_1<0$ and derive a contradiction. Since $x_1=\max\left\{x_p,x_{p-1},\ldots,x_{p-(k-2)},0\right\}-x_{p-(k-1)}$, where the indexes are taken $\mathrm{mod} (p)$ if $p<k$, and $$\max\left\{x_p,x_{p-1},\ldots,x_{p-(k-2)},0\right\}=0,$$ we deduce that $x_1+x_{p-(k-1)}=0$, which is not possible from the fact that all the sequence is negative.
It is immediate to check that if $m = 0$, then necessarily $(x_n)=\mathbf{\overline{0}}.$
\end{proof}

From now on, for the periodic sequence $(x_n)$ of period $p$, we will assume that $x_1=\max\left\{x_j:j=1,\ldots,p\right\}\geq 0$.
				
\begin{proposition}\label{P:Positivos}
Let $\left(x_n\right)_n$ be a periodic solution of Equation~(\ref{Eq:Gk}) of period $p$, $p\geq 2$. Then 
$$x_j\geq 0, \,\, \text{for} \,\, j=1,\ldots,k, \ \ \ \text{and} \ \ \ x_{p-j}\geq 0, \,\, \text{for} \,\, j=0,\ldots,k-2.$$	Additionally, $x_{k+1}\leq 0$ and $x_{p-k+1}\leq 0.$ (All the indexes are taken $\mathrm{mod}(p)$.)
\end{proposition}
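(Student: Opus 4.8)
The plan is to reduce everything to one rearrangement of the recurrence. Writing (\ref{Eq:Gk}) as
\[
x_n+x_{n+k}=\max\{x_{n+1},x_{n+2},\ldots,x_{n+k-1},0\},
\]
valid for every $n$ (indices mod $p$), I obtain an identity whose right-hand side is both $\geq 0$ and $\geq x_{n+j}$ for each $1\leq j\leq k-1$. Combined with the normalization $x_1=m:=\max\{x_j:j=1,\ldots,p\}\geq 0$ (so that $x_i\leq m$ for all $i$), this single identity yields the two elementary bounds
\[
0\leq x_n+x_{n+k}\leq m\qquad\text{for all }n,
\]
the upper one because a maximum of quantities each $\leq m$ (together with $0\leq m$) cannot exceed $m$. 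These two inequalities are the only structural facts I will use.

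For the one-sided estimates I would instantiate the upper bound at the maximum. Since $x_1=m$, applying $x_n+x_{n+k}\leq m$ with $n=1$ gives $x_{k+1}\leq m-x_1=0$, and applying it with $n=1-k$ (so that $x_{n+k}=x_1$) gives $x_{1-k}\leq 0$, i.e. $x_{p-k+1}\leq 0$. This disposes of the two claims $x_{k+1}\leq 0$ and $x_{p-k+1}\leq 0$.

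The positivity blocks come from sharpening the lower bound exactly at the windows that straddle the maximum. For $n\in\{2-k,3-k,\ldots,0\}$ the integer $1$ lies in $[n+1,n+k-1]$, so $x_1=m$ is literally one of the listed terms and hence $x_n+x_{n+k}\geq m$. Since also $x_n\leq m$ and $x_{n+k}\leq m$, each summand is forced to be nonnegative: if $a+b\geq m$ with $a,b\leq m$ then $a\geq m-b\geq 0$ and likewise $b\geq 0$. Letting $n$ run through $\{2-k,\ldots,0\}$ and reading off the terms $x_{n+k}$ produces $x_2,x_3,\ldots,x_k\geq 0$, which together with $x_1\geq 0$ gives the first block; reading off the terms $x_n$ produces $x_0,x_{-1},\ldots,x_{2-k}\geq 0$, that is $x_p,x_{p-1},\ldots,x_{p-(k-2)}\geq 0$, which is the second block.

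The only point demanding care is the bookkeeping of indices modulo $p$, particularly when $p$ is small relative to $k$ and the windows wrap around the cycle. This causes no real trouble: since the integer $1$ itself always lies in the interval $[n+1,n+k-1]$ for the chosen $n$, periodicity guarantees that $x_1$ appears among $x_{n+1},\ldots,x_{n+k-1}$, so the lower bound $x_n+x_{n+k}\geq m$ holds irrespective of the size of $p$; symmetrically, the universal upper bound $x_n+x_{n+k}\leq m$ never depends on wrap-around. Hence I anticipate no genuine obstacle. One could alternatively derive the backward conclusions—the second block and $x_{p-k+1}\leq 0$—from the forward ones by noting that the reflected sequence $y_m:=x_{2-m}$ also solves (\ref{Eq:Gk}), but the direct computation above already delivers all four statements at once.
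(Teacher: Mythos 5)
Your proof is correct and rests on essentially the same idea as the paper's: instantiate the recurrence at the length-$(k-1)$ windows that straddle the maximal term $x_1=m$, so the maximum in the equation equals $m$, and use the trivial bound that this maximum never exceeds $m$ to get the two non-positivity claims. Your symmetric packaging $0\le x_n+x_{n+k}\le m$ merely unifies in one pass what the paper does in two (forward recurrence for $x_2,\dots,x_k$, inverse recurrence for $x_p, x_{p-1},\dots$), since the identity $x_{n+k}=x_1-x_n$ at such windows read in either direction yields both blocks; this is a tidy streamlining but not a different argument.
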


\begin{proof}
By periodicity of the sequence, for $j=2,\ldots,k$, we find 
\begin{eqnarray*}
x_{j}&=&\max\{x_{j-1},\ldots,x_1,x_{p},x_{p-1},\ldots,x_{p-(k-j-1)},0\}-x_{p-(k-j)}\\
&=& x_1 - x_{p-(k-j)} \geq 0.
\end{eqnarray*}
For the second set of inequalities, we will apply again the periodicity of the sequence going backward by Equation (\ref{Eq:GkInversa}). For instance, 
$$x_p=\max\left\{x_1,x_2,\ldots,x_{k-1},0\right\}-x_k=x_1-x_k\geq 0,$$ $$x_{p-k+2}=\max\left\{x_{p-k+3},x_{p-k+4}, \ldots,x_{p},x_{1},0\right\}-x_2=x_1-x_2\geq 0.$$
Finally, $$x_{k+1}=\max\left\{x_k,x_{k-1},\ldots,x_{2},0\right\}-x_1\leq 0,$$ $$x_{p-k+1}=\max\left\{x_{p-k+2},x_{p-k+3},\ldots,x_{p},0\right\}-x_1\leq 0.$$
\end{proof}

\begin{corollary} \label{lemma0}
Let $(x_n)$ be a $p$-periodic sequence of (\ref{Eq:G4}). If there exists a $j \in \{1,...,p\}$ such that $x_j = x_1$, then $x_{j+1}, x_{j+2}$ and $x_{j+3}$ are non-negative and $x_{j+4}$ is non-positive.
\end{corollary}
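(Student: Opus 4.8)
The plan is to recognize that Corollary~\ref{lemma0} is essentially a restatement of Proposition~\ref{P:Positivos} specialized to $k=4$, but applied at an arbitrary index $j$ where the maximum is attained rather than only at $j=1$. The key observation is that Proposition~\ref{P:Positivos} was proven under the normalizing assumption that $x_1=\max\{x_j:j=1,\ldots,p\}$, and the entire argument there only used that $x_1$ equals the maximum of the sequence together with the periodicity of $(x_n)$. Since the hypothesis $x_j=x_1$ together with the standing convention $x_1=\max\{x_i:i=1,\ldots,p\}$ guarantees that $x_j$ is itself a maximal value, we may cyclically reindex the sequence so that this maximal entry plays the role of the first coordinate.

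First I would make the reindexing explicit: define $y_i:=x_{j+i-1}$ (indices mod $p$), so that $(y_i)_i$ is again a $p$-periodic solution of~(\ref{Eq:G4}) by the translation-invariance of the equation, and $y_1=x_j=x_1=\max\{y_i:i=1,\ldots,p\}$. Then $(y_i)$ satisfies the standing hypothesis of Proposition~\ref{P:Positivos}. Applying that proposition with $k=4$ yields $y_2,y_3,y_4\geq 0$ (the first block of inequalities $x_i\geq 0$ for $i=1,\ldots,k$) and $y_5=y_{k+1}\leq 0$ (the stated sign condition $x_{k+1}\leq 0$). Translating back through $y_i=x_{j+i-1}$ gives precisely $x_{j+1},x_{j+2},x_{j+3}\geq 0$ and $x_{j+4}\leq 0$, which is the claim.

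I do not anticipate a serious obstacle here, since the result is a direct corollary; the only point requiring care is the bookkeeping of indices modulo $p$ and the verification that the cyclic shift of a periodic solution is again a solution of the \emph{same} autonomous equation. The latter is immediate because~(\ref{Eq:G4}) is autonomous, so if $(x_n)$ satisfies it then so does any shift $(x_{n+s})$; combined with periodicity, the shifted sequence inherits both the period $p$ and the property that its leading term is the global maximum. Thus the mild subtlety is purely notational: one must confirm that the inequalities of Proposition~\ref{P:Positivos} indexed by the shifted sequence correspond, after undoing the shift, to the indices $j+1,j+2,j+3,j+4$ claimed in the corollary, and that no wrap-around issue spoils the sign pattern. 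Once the reindexing is set up correctly, the proof is a one-line invocation of the previous proposition.
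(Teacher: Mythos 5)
Your proof is correct and is precisely the argument the paper intends: the corollary is stated without proof as an immediate consequence of Proposition~\ref{P:Positivos}, and the cyclic reindexing $y_i:=x_{j+i-1}$ you spell out (using autonomy of the equation and that $x_j=x_1$ is the global maximum under the standing convention) is exactly that implicit reasoning, specialized to $k=4$. The only pedantic caveat is that Proposition~\ref{P:Positivos} assumes $p\geq 2$, but for $p=1$ the solution is the equilibrium $\mathbf{\overline{0}}$ and the claim holds trivially.
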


Notice that if we multiply the initial conditions of a periodic sequence by a positive constant $\alpha$, the sequence is still periodic with the same period. Indeed, the proof follows directly by taking out the factor $\alpha$ in Equation $(\ref{Eq:Gk})$ and will be omitted. 

\begin{proposition} \label{Prop_scalar}
Let $(x_n)$ be a periodic solution of Equation $(\ref{Eq:Gk})$ generated by the initial conditions $(x_1,\ldots,x_k)$. Then the sequence $(\alpha \cdot x_n)_n$ with $\alpha>0$ is periodic with the same period.
\end{proposition}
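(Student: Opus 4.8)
The plan is to exploit the \emph{positive homogeneity of degree one} of the right-hand side of Equation~(\ref{Eq:Gk}). The single fact that makes everything work is that scaling by a positive constant commutes with the maximum operation and fixes the constant $0$: for any $\alpha>0$ and any reals $t_1,\ldots,t_{k-1}$, one has $\max\{\alpha t_1,\ldots,\alpha t_{k-1},0\}=\alpha\max\{t_1,\ldots,t_{k-1},0\}$, because a positive factor preserves the order of its arguments and $\alpha\cdot 0=0$. This is the whole content of the statement; there is essentially no obstacle beyond organizing it correctly.

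First I would set $y_n:=\alpha x_n$ and verify directly that $(y_n)_n$ again satisfies~(\ref{Eq:Gk}). Substituting and pulling the positive factor through the maximum gives
\begin{align*}
\max\{y_{n+k-1},\ldots,y_{n+1},0\}-y_n
&=\alpha\max\{x_{n+k-1},\ldots,x_{n+1},0\}-\alpha x_n\\
&=\alpha\bigl(\max\{x_{n+k-1},\ldots,x_{n+1},0\}-x_n\bigr)\\
&=\alpha\,x_{n+k}=y_{n+k},
\end{align*}
so $(y_n)_n$ is precisely the solution of~(\ref{Eq:Gk}) generated by the scaled initial conditions $(\alpha x_1,\ldots,\alpha x_k)$.

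Next I would transfer periodicity. Since $(x_n)_n$ has period $p$, we have $x_{n+p}=x_n$ for all $n\geq 1$, whence $y_{n+p}=\alpha x_{n+p}=\alpha x_n=y_n$; thus $(y_n)_n$ is periodic and its (least) period divides $p$. The only point requiring a word of care is that the period is \emph{exactly} $p$ and not a proper divisor. This I would settle by symmetry: as $\alpha>0$ is invertible with $\alpha^{-1}>0$, the identical argument applied to the positive factor $\alpha^{-1}$ shows that $(x_n)_n=(\alpha^{-1}y_n)_n$ has period dividing that of $(y_n)_n$. Combining the two divisibilities forces the periods of $(x_n)_n$ and $(\alpha x_n)_n$ to coincide, which completes the proof.
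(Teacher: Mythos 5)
Your proof is correct and is exactly the argument the paper has in mind: the paper omits the details, stating only that the result ``follows directly by taking out the factor $\alpha$ in Equation~(\ref{Eq:Gk})'', which is precisely your homogeneity computation. Your additional care in pinning down the \emph{least} period via the symmetric argument with $\alpha^{-1}$ is a welcome completion of the detail the paper glosses over.
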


Finally, it is relevant to mention that every solution of Equation (\ref{Eq:Gk}) is bounded. This result was proved in \cite[Th. 12]{CsLa2001}.
\begin{proposition} \label{Boundedness}
Every solution $(x_n)$ of Equation (\ref{Eq:Gk}) is bounded.
\end{proposition}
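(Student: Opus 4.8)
The plan is to control a single sliding-window maximum and to show that its increments telescope. For $n\ge 1$ set $u_n=\max\{x_n,x_{n+1},\ldots,x_{n+k-1},0\}$ and $b_n=\max\{x_{n+1},\ldots,x_{n+k-1},0\}$, and write $t^{+}=\max\{t,0\}$, $t^{-}=\max\{-t,0\}$ for the positive and negative parts of a real number $t$. Equation~(\ref{Eq:Gk}) reads $x_{n+k}=b_n-x_n$, so that $u_{n+1}=\max\{b_n,x_{n+k}\}=\max\{b_n,\,b_n-x_n\}=b_n+(x_n)^{-}$, while trivially $u_n=\max\{x_n,b_n\}=b_n+(x_n-b_n)^{+}$.

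Subtracting the two expressions gives the clean increment formula $u_{n+1}-u_n=(x_n)^{-}-(x_n-b_n)^{+}$. The key observation is that $x_n-b_n=-x_{n+k}$ by the recurrence, hence $(x_n-b_n)^{+}=(x_{n+k})^{-}$, and therefore $u_{n+1}-u_n=(x_n)^{-}-(x_{n+k})^{-}$. Summing from $n=1$ to $N$, the right-hand side telescopes, since the term $(x_{n+k})^{-}$ cancels against $(x_m)^{-}$ for $m=n+k$, leaving $u_{N+1}=u_1+\sum_{n=1}^{k}(x_n)^{-}-\sum_{n=N+1}^{N+k}(x_n)^{-}$. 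Since every negative part is nonnegative, we obtain, for all $N$, the uniform upper bound $u_{N+1}\le u_1+\sum_{n=1}^{k}(x_n)^{-}=:C$, a constant depending only on the initial data. In particular $x_n\le u_n\le C$ for every $n\ge 1$.

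It then remains to bound the sequence from below, and this is immediate once the upper bound is in hand. Reading the recurrence backwards, i.e. using the inverse relation~(\ref{Eq:GkInversa}), we have $x_n=b_n-x_{n+k}$ with $b_n\ge 0$; combining this with $x_{n+k}\le C$ yields $x_n\ge -x_{n+k}\ge -C$ for every $n\ge 1$. Hence $|x_n|\le C$ for all $n$, which proves the boundedness of the solution together with an explicit bound in terms of the initial conditions.

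The substantive point — and the step I expect to be the main obstacle — is the choice of the right potential. The naive candidate, the sup-norm $\max\{|x_n|,\ldots,|x_{n+k-1}|\}$ of a window, is genuinely not monotone: when $x_n<0$ the window maximum can strictly increase. The one-sided quantity $u_n$ repairs this, because its increments pair up through the identity $x_{n+k}+x_n=b_n\ge 0$ and telescope. Verifying the telescoping for small indices $N<k$, and checking that $u_n\le C$ also holds at $n=1$, are routine and I would dispatch them directly.
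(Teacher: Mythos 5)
Your argument is correct; I checked each step. The identities $u_{n+1}=\max\{b_n,\,b_n-x_n\}=b_n+(x_n)^{-}$ and $u_n=\max\{x_n,b_n\}=b_n+(x_n-b_n)^{+}$ are elementary; the recurrence gives $x_n-b_n=-x_{n+k}$, hence $(x_n-b_n)^{+}=(x_{n+k})^{-}$ and the increment formula $u_{n+1}-u_n=(x_n)^{-}-(x_{n+k})^{-}$; summing telescopes with lag $k$, and the boundary formula $u_{N+1}=u_1+\sum_{n=1}^{k}(x_n)^{-}-\sum_{n=N+1}^{N+k}(x_n)^{-}$ is indeed valid for every $N\ge 1$, including $N<k$ (splitting $\sum_{n=N+1}^{N+k}=\sum_{n=N+1}^{k}+\sum_{n=k+1}^{N+k}$ shows the two expressions agree); finally $C=u_1+\sum_{j=1}^{k}(x_j)^{-}\ge 0$ and $x_n=b_n-x_{n+k}\ge -x_{n+k}\ge -C$ give the lower bound. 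Be aware, though, that there is nothing in this paper to compare your proof against: the authors do not prove Proposition~\ref{Boundedness} at all, they import it from \cite[Th.~12]{CsLa2001}. So your argument is neither ``the same as'' nor ``a variant of'' the paper's proof --- it supplies a proof where the paper delegates to the literature. What your route buys is substantial: it is short, self-contained and purely elementary, and it is quantitative, producing the explicit bound $|x_n|\le C=u_1+\sum_{j=1}^{k}(x_j)^{-}$ in terms of the initial data, which is more than the qualitative statement the paper quotes. The genuinely clever point is the one you flag yourself: the two-sided window sup-norm fails as a potential, while the one-sided window maximum $u_n$ (including $0$) works because its increments pair up across a lag of $k$ through the identity $x_n+x_{n+k}=b_n\ge 0$.
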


\subsection{Properties for $k=4$}

From now on, we leave the general recursion~(\ref{Eq:Gk}) and focus our attention on the corresponding difference equation of order $k=4$,~(\ref{Eq:G4}): $x_{n+4}=\max\{x_{n+3},x_{n+2}, x_{n+1},0\}-x_n.$
				
\begin{proposition} \label{Prop_monotonic}
Let $(x_n)$ be a solution of Equation (\ref{Eq:G4}). If there exist four consecutive monotonic terms, then the solution is an 11-cycle.
\end{proposition}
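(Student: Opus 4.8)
The plan is to combine the time-reversibility of~(\ref{Eq:G4}) with a single explicit eleven-step computation, using a reduction lemma to kill all the sign cases. First I would reduce to the non-decreasing case. By~(\ref{Eq:GkInversa}), reading a solution backwards again gives a solution: if $(x_n)$ solves~(\ref{Eq:G4}), then $y_j:=x_{C-j}$ solves~(\ref{Eq:G4}) for any constant $C$, and a block of four consecutive non-increasing terms of $(x_n)$ becomes a block of four consecutive non-decreasing terms of $(y_j)$, with $(x_n)$ an $11$-cycle iff $(y_j)$ is. Since the equation is autonomous I may also shift indices, so without loss of generality the four monotone terms are $x_1\le x_2\le x_3\le x_4$; write $a\le b\le c\le d$ for them.

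Second, I would prove a reduction lemma that removes every sign subcase: \emph{if $x_n\le x_{n+1}\le x_{n+2}\le x_{n+3}$ and $x_n<0$, then $x_{n+1}\le x_{n+2}\le x_{n+3}\le x_{n+4}$, and this new window contains exactly one fewer negative term.} This is immediate from the recursion: since $\max\{x_{n+3},x_{n+2},x_{n+1},0\}\ge\max\{x_{n+3},0\}$, we get both $x_{n+4}\ge x_{n+3}-x_n>x_{n+3}$ (so monotonicity persists) and $x_{n+4}\ge -x_n>0$ (so the new entry is positive); thus the negative entry $x_n$ is dropped and replaced by a positive one. As the negative entries of a monotone block form its initial segment, iterating the lemma at most four times shifts the window forward to one of the form $0\le a'\le b'\le c'\le d'$. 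Because shifting the window does not change the underlying sequence, it suffices to treat this non-negative base case.

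Third comes the heart of the argument, the base case $0\le a\le b\le c\le d$, where I would simply iterate~(\ref{Eq:G4}) eleven times. Using monotonicity and non-negativity to evaluate each maximum, one checks term by term that
\[
(x_5,\dots,x_{11})=(d-a,\ d-b,\ d-c,\ -a,\ a-b,\ b-c,\ c-d),
\]
and then that $(x_{12},x_{13},x_{14},x_{15})=(a,b,c,d)=(x_1,x_2,x_3,x_4)$. Hence the state returns after $11$ steps, i.e.\ $F_4^{11}$ fixes the orbit, so the period divides $11$. Since $11$ is prime the period is $1$ or $11$, and by Proposition~\ref{Prop_equi} it equals $1$ only for the constant solution $\mathbf{\overline 0}$; in every other case it is exactly $11$.

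The routine part is the eleven maximum-evaluations in the base case, where at each step one confirms which argument is selected (the value $d=x_4$ for the first three steps, then $x_5,x_6,x_7$, and finally $0$); these are elementary given $0\le a\le b\le c\le d$. The point requiring genuine care -- and what I expect to be the main obstacle if one proceeds naively -- is the proliferation of sign patterns for $(a,b,c,d)$. The reduction lemma of the second step is precisely what collapses all of them onto the single non-negative computation, so pinning that lemma down (monotonicity of the shifted window together with the strict decrease in the number of negative entries) is the decisive step.
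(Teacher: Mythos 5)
Your proof is correct, but it takes a genuinely different route from the paper's. The paper disposes of the statement in two lines: when the four monotone terms are the initial conditions it simply cites Golomb's problem \cite{Golomb92} (monotone initial data generate a sequence of period $3k-1$, here $11$), and when the monotone window occurs later it invokes Proposition~\ref{Prop_eventually} to upgrade eventual periodicity to periodicity. You instead reprove the Golomb result for $k=4$ from scratch: time reversal via~(\ref{Eq:GkInversa}) reduces non-increasing windows to non-decreasing ones; your sign-reduction lemma (a monotone window with negative leading term shifts to a monotone window with exactly one fewer negative entry) collapses all sign patterns onto the non-negative case; and your explicit eleven-step computation --- which checks out term by term, $(x_5,\dots,x_{11})=(d-a,d-b,d-c,-a,a-b,b-c,c-d)$ followed by $(x_{12},\dots,x_{15})=(a,b,c,d)$ --- shows the state recurs, whence the prime period divides $11$ and, by primality and Proposition~\ref{Prop_equi}, equals $11$ unless the solution is $\mathbf{\overline{0}}$. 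What the paper's proof buys is brevity, at the cost of outsourcing the core computation to an external reference; what yours buys is a self-contained argument that also makes the degenerate case explicit (all four monotone terms zero, period $1$), a caveat the paper only acknowledges elsewhere (e.g.\ in Proposition~\ref{P:1,8,11}, where monotone initial conditions are required to satisfy $x_1\neq 0$). Two small points to tighten: the step from ``the map fixes one state after $11$ iterations'' to ``the \emph{entire} solution, including the terms before the window, is periodic'' needs the invertibility of the associated map $F$ --- this is exactly Proposition~\ref{Prop_eventually}, so cite it there rather than leaving it implicit; and the time-reversal construction $y_j=x_{C-j}$ should be phrased on the bi-infinite extension $(x_n)_{n\in\mathbb{Z}}$ that the bijectivity of $F$ provides, since a solution given only for $n\geq 1$ has no backward terms a priori.
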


\begin{proof}
Firstly, notice that if the four consecutive monotonic terms are the initial conditions, the result follows by \cite{Golomb92}. Otherwise, we will have an eventually periodic sequence, but by Proposition \ref{Prop_eventually}, it will be periodic. 
\end{proof}

In the case of $8$-cycles, we present the following characterization.
\begin{proposition}\label{P:periodo8}
Given Equation~(\ref{Eq:G4}) of order $k=4$, a sequence $\left(x_n\right)_n$ is periodic of period $8$ if and only if 
\begin{equation} \label{Eq:8p}
\left(x_n\right)_n=\left(\ldots, x, 0, x, \alpha, 0, x, 0, x-\alpha, x, 0, x, \alpha, 0, x, 0, x-\alpha, \ldots\right),
\end{equation}
with $x>0$ and $\alpha\in[0,x]$.
\end{proposition}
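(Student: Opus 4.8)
The plan is to prove the two implications separately, treating the ``if'' direction as a routine verification and concentrating the real work on the ``only if'' direction. For the easy direction, I would take the proposed sequence~(\ref{Eq:8p}) with $x>0$ and $\alpha\in[0,x]$ and simply check that it satisfies~(\ref{Eq:G4}) at each of the eight residues modulo $8$; since every window of four consecutive terms consists of the values displayed in~(\ref{Eq:8p}), each verification reduces to a small $\max$ computation, and one confirms that $8$ is genuinely the period (not a proper divisor) by noting that for $\alpha\notin\{0,x\}$ the four distinct values $0,\alpha,x,x-\alpha$ already rule out periods $1,2,4$, while the boundary cases $\alpha=0$ and $\alpha=x$ must be inspected to confirm they still have period $8$ and not $4$.

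For the substantive ``only if'' direction, I would start from a period-$8$ sequence and invoke the normalization already fixed in the excerpt, namely $x_1=\max\{x_j:j=1,\ldots,8\}=:x\ge 0$, with $x>0$ by Proposition~\ref{P:maxNoNeg} (otherwise the sequence is $\mathbf{\overline 0}$, which is not $8$-periodic). The first step is to extract all the sign and equality information forced by the general structural results applied with $k=4$, $p=8$: Proposition~\ref{P:Positivos} gives $x_1,x_2,x_3,x_4\ge 0$ and $x_8,x_7,x_6\ge 0$ together with $x_5\le 0$ and $x_{p-k+1}=x_5\le 0$, and Corollary~\ref{lemma0} will be the main engine, since any index $j$ with $x_j=x_1$ forces $x_{j+1},x_{j+2},x_{j+3}\ge 0$ and $x_{j+4}\le 0$. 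The idea is to run the recursion~(\ref{Eq:G4}) and its inverse~(\ref{Eq:GkInversa}) around the full cycle of length $8$, writing each of the eight terms as a $\max$ of its predecessors minus one earlier term, and to use the established nonnegativities to collapse each $\max$ to $x_1=x$. This should pin down the skeleton $x_1=x$, $x_3=x$, $x_5=0$, $x_7=x$ and the two forced zeros, leaving a single free parameter $\alpha:=x_4$ constrained by $x_4+x_8=x$ (so $x_8=x-\alpha$) and by the bound $0\le\alpha\le x$ coming from the maximality of $x_1$ and the nonnegativity of $x_8$.

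The main obstacle, and the step I would spend the most care on, is disentangling which entries the $\max$ actually selects as one walks around the cycle: with $p=8$ and $k=4$ the index windows wrap around modulo $8$ and overlap heavily, so a naive application of Proposition~\ref{P:Positivos} does not immediately identify \emph{which} term realizes the maximum at each step. The careful bookkeeping is to show that at every one of the eight positions the relevant four-term window contains an entry equal to $x$ (equivalently, that the positions carrying the value $x$ are spaced so that no window of four consecutive indices misses them), which is exactly what lets each $\max$ evaluate to $x$ and forces the alternating pattern $x,0,x,\alpha,0,x,0,x-\alpha$. Once that selection is justified, solving the resulting linear relations is immediate and yields precisely~(\ref{Eq:8p}); the final bookkeeping step is to rule out the degenerate possibility that the period is a proper divisor of $8$, which is handled exactly as in the ``if'' direction by examining the value multiset.
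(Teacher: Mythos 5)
Your ``if'' direction coincides with the paper's (direct verification that the pattern satisfies~(\ref{Eq:G4}), then exclusion of the proper divisors $1,2,4$ of $8$), and the broad strategy you propose for the ``only if'' direction --- run~(\ref{Eq:G4}) and its inverse~(\ref{Eq:GkInversa}) around the cycle and collapse each maximum to $x_1$ --- is also the paper's. But there is a genuine gap at precisely the step you single out as the main obstacle. Your plan asserts that the collapsing argument ``pins down the skeleton $x_1=x$, $x_3=x$, $x_5=0$, $x_7=x$'' with a single free parameter $\alpha:=x_4$ and $x_8=x-\alpha$. That skeleton is impossible. In the target pattern~(\ref{Eq:8p}) the value $x$ sits at positions $1,3,6$ and $x_7=0$; normalizing a cyclic shift of~(\ref{Eq:8p}) so that $x_1=x$, the positions carrying the value $x$ can only be $\{1,3,6\}$, $\{1,4,6\}$ or $\{1,4,7\}$, never $\{1,3,7\}$. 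One can also see the contradiction directly: with $x_1=x_3=x_7=x$, $x_2=x_6=0$, $x_4=\alpha$, the recursion forces $x_6=\max\{x_5,x_4,x_3,0\}-x_2=x\neq 0$. This is not a cosmetic slip: it reflects the fact that no single skeleton \emph{can} be forced, because the normalization $x_1=\max\{x_j\}$ does not tell you where in the $8$-cycle you have landed; the conclusion is necessarily ``a cyclic shift of~(\ref{Eq:8p})'', and reaching it requires a case analysis your plan omits.

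The missing mechanism is the one the paper supplies. Backward periodicity gives $x_8=x_1-x_4$, $x_7=x_1-x_3$, $x_6=x_1-x_2$; comparing the first of these with the forward step $x_8=\max\{x_7,x_6,x_5,0\}-x_4$ yields $\max\{x_7,x_6,x_5,0\}=x_1$, and since $x_5\leq 0<x_1$ this forces $x_6=x_1$ \emph{or} $x_7=x_1$ --- an unavoidable branching, each branch splitting again (via $x_1=\max\{x_3,x_4\}$, respectively $x_1=\max\{x_2,x_4\}$) into two sub-cases, producing four families of solutions, each a shift of~(\ref{Eq:8p}). Your key claim that ``every four-term window contains an entry equal to $x$'' is a property of the \emph{conclusion}, not a usable hypothesis: a priori only the windows meeting index $1$ (mod $8$) are known to contain $x$, and it is exactly the displayed max-identity plus the case analysis that certifies the remaining windows. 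Without it, the ``resulting linear relations'' you intend to solve are never produced.
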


\begin{proof}
The sufficiency can be shown in a direct way as follows. Without loss of generality, suppose that $(x_1,x_2,x_3,x_4)=(x,0,x,\alpha),$ with $x>0$ and $\alpha\in[0,x]$. It is immediate, by the computation of the following terms, $x_j, \ j=5,\ldots, 12$, that the sequence has a period which is a divisor of $8$.

Since $x\neq 0$, period $1$ is excluded. Also, period $2$ is excluded by Proposition~\ref{Remark_Per2}, and if the period were $4$, then $x=x_1=x_5=0$, which is a new contradiction. Therefore, the sequence has exactly period $8$.
							
Now, suppose that $(x_n)$ is a periodic sequence of period $8$.  Notice that, by Proposition \ref{P:maxNoNeg}, $x_1 = \max\{x_n: n\geq 1 \} > 0$. Also, by Proposition~\ref{P:Positivos}, $x_1,x_2,x_3$ and $x_4$, as well as $x_6, x_7$ and $x_8$, are non-negative terms, while $x_5\leq 0$. Even more, we have 
\begin{equation}\label{Eq:tresen8}
x_8=x_1-x_4; \ \ x_7=x_1-x_3; \ \ x_6=x_1-x_2.
\end{equation}

Since $x_8=\max\{x_7,x_6,x_5,0\}-x_4=x_1-x_4,$ we deduce that $x_1=\max\{x_7,x_6,x_5,0\},$ with $x_5\leq 0$.
\begin{itemize}
\item[(i)] If $x_1=x_6$, from (\ref{Eq:tresen8}) we deduce that $x_2=0$. Also, $x_5=x_1-x_1=0$, and our periodic sequence has the form $\left(x,0, \cdot, \cdot, 0,x,\cdot,\cdot\right).$ On the other hand, $0=x_5=\max\{x_4,x_3,x_2,0\}-x_1=\max\{x_4,x_3\}-x_1,$ that is, $x_1=\max\{x_4,x_3\}.$ If $x_1=x_3$, then $x_7=0$ according to (\ref{Eq:tresen8}); in this case the choice of $x_4$ is arbitrary, with $0\leq x_4\leq x_1$, and $x_8=x_1-x_4$; if we put $\alpha=x_4\in[0,x_1]$, we obtain the sequence 
$$\left(x_1,0, x_1, \alpha, 0,x_1, 0,x_1-\alpha,\ldots\right),$$ and it is easily seen that we generate a periodic sequence of period $8$. If, otherwise, $x_1=x_4$, now $x_8=0$, and if we set an arbitrary $\beta\in[0,x_1]$, we obtain the $8$-periodic sequence 
$$\left(x_1,0, \beta, x_1, 0,x_1, x_1-\beta,0\ldots\right).$$
								
\item[(ii)] If $x_1=x_7$, (\ref{Eq:tresen8}) gives $x_3=0$. Similarly to case (i), $x_5=0$ and 
we have $x_1=\max\{x_2,x_4\}.$ If $x_1=x_2$, again (\ref{Eq:tresen8}) gives $x_6=0$, and taking into account that $x_6=\max\{x_5,x_4,x_3,0\}-x_2=x_4-x_1=0$, we deduce that $x_4=x_1$, $x_8=x_1-x_4=0$ and arrive to 
$$\left(x_1,x_1, 0, x_1, 0,0,x_1, 0,\ldots\right).$$
If, on the contrary, $x_1=x_4$, from (\ref{Eq:tresen8}) $x_8=0$, and as above $x_5=0$. If we put $x_2=\gamma\in[0,x_1]$, then $x_6=x_1-\gamma$ and we obtain the periodic sequence of period $8$  
$$\left(x_1,\gamma, 0, x_1, 0, x_1-\gamma, x_1, 0,\ldots\right).$$
\end{itemize}
\end{proof}

In fact, in the above proposition, we have implicitly found the unique periodic solutions whose terms are all non-negative. 
\begin{proposition}\label{P:todosPositivos}
Let $(x_n)$ be a periodic sequence of Equation~(\ref{Eq:G4}), with $x_n\geq 0$ for all $n\geq 0$. Then, either the sequence is $\mathbf{\overline{0}}=(0,0,0,0,\ldots)$ or it is a periodic sequence of period $8$ given by~(\ref{Eq:8p}).
\end{proposition}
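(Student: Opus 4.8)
The plan is to exploit the fact that nonnegativity collapses the maximum in~(\ref{Eq:G4}) to $\max\{x_{n+3},x_{n+2},x_{n+1}\}$ and then to reduce everything to the local behaviour around a term realizing the global maximum. First I would set $m=\max\{x_n:n\geq 1\}$. By Proposition~\ref{P:maxNoNeg} we have $m\geq 0$, and if $m=0$ the sequence is $\mathbf{\overline{0}}$; so from now on assume $m>0$ and, after a cyclic reindexing, that $x_1=m$. Applying Corollary~\ref{lemma0} gives $x_5\leq 0$, which together with the hypothesis $x_5\geq 0$ forces $x_5=0$. Feeding this back into the recursion, $0=x_5=\max\{x_2,x_3,x_4\}-x_1$, so $\max\{x_2,x_3,x_4\}=m$: the maximum necessarily reappears among the three terms immediately following any of its occurrences.

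The core of the argument is then a finite case analysis according to which of $x_2,x_3,x_4$ equals $m$, partitioned as $x_4=m$, or $x_4<m$ with $x_3=m$, or $x_3,x_4<m$ with $x_2=m$. I record the two structural facts I will use repeatedly: whenever $x_j=m$, nonnegativity together with Corollary~\ref{lemma0} gives $x_{j+4}=0$ and hence $\max\{x_{j+1},x_{j+2},x_{j+3}\}=m$, while the inverse equation~(\ref{Eq:GkInversa}) yields the balance identity $x_{j-1}+x_{j+3}=m$. In the case $x_4=m$ these relations compute $x_6=m-x_2$, $x_7=m-x_3$, $x_8=0$, and the nonnegativity of $x_8=-\min\{x_2,x_3\}$ forces $x_2=0$ or $x_3=0$; either way one checks $x_9=m$ and $(x_9,x_{10},x_{11},x_{12})=(x_1,x_2,x_3,x_4)$. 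In the case $x_3=m$ with $x_4<m$, the balance identity and $\max\{x_4,x_5,x_6\}=m$ force $x_2=0$, and a direct computation again returns $(x_9,\ldots,x_{12})=(x_1,\ldots,x_4)$, producing precisely $(m,0,m,x_4,0,m,0,m-x_4)$, i.e. the family~(\ref{Eq:8p}). Finally, the case where $x_2=m$ but $x_3,x_4<m$ is \emph{impossible}: it would demand $\max\{x_3,x_4,x_5\}=\max\{x_3,x_4,0\}=m$, contradicting $x_3,x_4<m$.

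In every admissible branch the computation shows $x_{n+8}=x_n$, so the period divides $8$. To finish I would rule out the proper divisors: period $1$ and period $4$ are excluded because $x_5=0\neq m=x_1$, and period $2$ is excluded by Proposition~\ref{Remark_Per2}(b), since $k=4$ is even. Hence the period is exactly $8$, and the explicit forms obtained in the case analysis coincide with~(\ref{Eq:8p}); alternatively, once period $8$ is established one may simply invoke Proposition~\ref{P:periodo8}.

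I expect the main obstacle to be the bookkeeping that turns the local constraints into the global statement $x_{n+8}=x_n$: in each branch one must verify that the nonnegativity of $x_6,x_7,x_8$ pins down the single free parameter (the value $\alpha\in[0,m]$ of~(\ref{Eq:8p})) and that the forward iteration genuinely closes up after eight steps rather than merely being consistent for a few terms. The delicate point is recognizing that $x_2=m$ is admissible only when the maximum is \emph{also} attained at position $3$ or $4$, so that the seemingly separate sub-cases all fall back into the $x_3=m$ or $x_4=m$ branches and no exotic nonnegative orbit can survive.
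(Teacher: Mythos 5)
Your proposal is correct and follows essentially the same route as the paper: reduce to $x_1=m>0$, deduce $x_5=0$ and $\max\{x_2,x_3,x_4\}=m$, and then run a finite case analysis on which of $x_2,x_3,x_4$ attains $m$, where nonnegativity of $x_6,x_7,x_8$ forces the required zero entries and a direct eight-step computation closes the orbit. Your bookkeeping differs only cosmetically from the paper's proof (disjoint cases with the pure $x_2=m$ branch ruled out rather than analyzed, the backward balance identity $x_{j-1}+x_{j+3}=m$ from Equation~(\ref{Eq:GkInversa}), and the explicit elimination of periods $1$, $2$, $4$), but the decomposition and the key deductions coincide.
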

\begin{proof}
By Proposition~\ref{P:Positivos}, we know that $x_j\geq 0$ for $j=1,2,3,4$ and $j=p-2,p-1,p$. Moreover, $x_5\leq 0$. Since we suppose that $x_j\geq 0$ for all $j$, we deduce that $x_5=0$. From the recurrence~(\ref{Eq:G4}), we have that $x_1=\max\{x_2,x_3,x_4\}.$ We now distinguish the corresponding cases.
							
(i) If $x_1=x_2$, it turns to $x_6=\max\{x_5,x_4,x_3,0\}-x_2=\max\{x_4,x_3\}-x_1$, and by the choice of $x_1,$ again $x_6\leq 0$, that is, $x_6=0$ from our hypothesis. Therefore, either $x_3=x_1$ or $x_1=x_4$. When $x_1=x_2=x_3$, necessarily $x_4=x_1$, otherwise $x_7=\max\{x_6,x_5,x_4,0\}-x_3=x_4-x_1<0,$ contrary to our initial assumption; thus, putting $x_1=x$ the initial conditions are $(x,x,x,x),$ with $x\geq 0$, and the sequence generated is $\left(x,x,x,x,0,0,0,-x,0,\ldots\right)$, consequently $x=0$ and we have the equilibrium point $\overline{\textbf{0}}=(0,0,0,0,\ldots)$. And for $x_1=x_2=x_4$, with $0\leq x_3\leq x_1$,  putting $x_1=x, x_3=y$ we find the sequence $\left(x,x,y,x, 0, 0, x-y, -y, \ldots\right),$ so $y=0$ and the recurrence provides an $8$-periodic sequence as can be easily verified.
							
(ii) If $x_1=x_3,$ apart from $x_5=0$ we have that  $x_6=\max\{x_5,x_4,x_3,0\}-x_2=x_3-x_2=x_1-x_2,$ and 
$x_7=\max\{x_6,x_5,x_4,0\}-x_3=\max\{x_1-x_2,x_4\}-x_1 \leq 0,$ so $x_7= 0$, that is, $\max\{x_1-x_2,x_4\}=x_1.$ Then, either $x_2=0$, and we obtain the $8$-periodic sequence generated by the initial conditions $(x,0,x,y),$ with $0\leq y\leq x$ (here $x=x_1$ and $x_4=y$); or $x_1=x_4$, and in this situation it can be checked without difficulty that $(x,y,x,x)$ (with $x_1=x, x_2=y$, $y\leq x$) generates the sequence $\left(x,y,x,x,0,x-y,0,-y,\ldots\right)$, and as in case (i), $y=0$ and the initial conditions yield an $8$-periodic sequence. 
							
(iii) If $x_1=x_4$, then it is immediate to obtain $x_6=x_1-x_2$ and $x_7=x_1-x_3$. Consequently, $x_8=\max\{x_1-x_3,x_1-x_2\}-x_1\leq 0,$ thus $x_8=0$ and $x_1=\max\{x_1-x_3,x_1-x_2\}$. If $x_2=0$, from $(x_1,0,x_3,x_1)$ we obtain a periodic sequence of period $8$; if $x_3=0$, now we obtain the $8$-periodic sequence $(x_1,x_2,0,x_1,0, x_1-x_2, x_1, 0, x_1, x_2, 0, x_1, \ldots)$. 
\end{proof}

In the next result, we prove that $1, 8, 11$ are the first periods in $\mathrm{Per}(F_4)$.
\begin{proposition}\label{P:1,8,11}
It holds $\mathrm{Per}(F_4) \cap [1,11]=\left\{1,8,11\right\}.$
\end{proposition}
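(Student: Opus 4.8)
The plan is to prove the two inclusions separately. The inclusion $\{1,8,11\}\subseteq\mathrm{Per}(F_4)$ is already within reach of the preceding results: $1\in\mathrm{Per}(F_4)$ by Proposition~\ref{Prop_equi}; the initial data $(x,0,x,\alpha)$ with $x>0$ and $\alpha\in[0,x]$ generate an exact $8$-cycle by Proposition~\ref{P:periodo8}, so $8\in\mathrm{Per}(F_4)$; and any strictly monotonic data, e.g.\ $(1,2,3,4)$, produce an $11$-cycle by Proposition~\ref{Prop_monotonic} (the orbit is nonzero and $11$ is prime, so the period is exactly $11$), whence $11\in\mathrm{Per}(F_4)$. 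The content of the statement is thus the reverse inclusion: that no $p\in\{2,3,4,5,6,7,9,10\}$ occurs as a period.

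For the exclusions I would fix a $p$-periodic solution, normalize it by the standing convention so that $x_1=\max\{x_j:1\le j\le p\}=:m\ge0$, and assume $m>0$ (Proposition~\ref{P:maxNoNeg} forces the zero solution, hence period $1$, when $m=0$). The two basic tools are Proposition~\ref{P:Positivos}, which fixes the signs at both ends of a period ($x_1,x_2,x_3,x_4\ge0$, $x_5\le0$, $x_{p-2},x_{p-1},x_p\ge0$, $x_{p-3}\le0$) together with the identities $x_{p-2}=x_1-x_2$, $x_{p-1}=x_1-x_3$, $x_p=x_1-x_4$ read off from its proof, and the recurrence~(\ref{Eq:G4}) itself. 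The case $p=2$ is immediate from Proposition~\ref{Remark_Per2}(b), since $k=4$ is even. For $p\in\{3,4,5,6,7\}$ the two sign blocks overlap, so that $x_{p-3}\le0$ (and, for the relevant $p$, the appearance of $x_5$ inside the nonnegative end block) collides with nonnegativity and forces one or more coordinates to vanish; substituting these zeros into~(\ref{Eq:G4}) and iterating a few steps forward then forces $x_1=0$, contradicting $m>0$. These are short deterministic computations with no case distinctions.

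The values $p=9$ and $p=10$ are the substantive ones, and I expect them to be the main obstacle, because now the negative positions ($x_5,x_6$ for $p=9$; $x_5,x_7$ for $p=10$) no longer overlap the nonnegative end block, so the sequence is not collapsed outright and one must carefully track which term attains each maximum. For $p=9$, iterating~(\ref{Eq:G4}) under the sign constraints first forces $x_3=0$ with $x_2\ge x_4\ge0$; computing $x_{10},x_{11}$ yields $x_2=x_1-x_4$, and then $x_{12}=\max\{x_2,x_1,x_4-x_5,0\}$, which is $\ge x_1>0$, must equal $x_3=0$ by periodicity, a contradiction.

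For $p=10$ I would feed the backward identities into the forward iteration: evaluating $x_9$ and $x_{10}$ two ways gives $x_5=x_3-x_2$ (so $x_3\le x_2$) and $x_6=x_4-x_3$, while the recurrence also gives $x_6=\max\{x_3,x_4\}-x_2$. Comparing these two expressions for $x_6$ produces a clean dichotomy: either $x_2=x_3$, which upon pushing the relations (using Corollary~\ref{lemma0}) forces $x_1=x_2=x_3=x_4$; or $x_4=2x_3-x_2$ with $x_2>x_3>x_4$, forcing $x_1>x_2>x_3>x_4$. In both branches $(x_1,x_2,x_3,x_4)$ is monotonic, so Proposition~\ref{Prop_monotonic} makes the solution an $11$-cycle, contradicting $p=10$. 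The cleanest closing move in both hard cases is to reduce either to the monotonicity criterion or to a forced $x_1=0$; the delicate part throughout is the bookkeeping of the maxima, which is where care is needed.
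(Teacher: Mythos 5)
Your proposal is correct and is essentially the paper's own argument: the inclusion is handled identically, and every exclusion rests on the same ingredients --- the sign blocks of Proposition~\ref{P:Positivos}, the backward identities $x_{p-2}=x_1-x_2$, $x_{p-1}=x_1-x_3$, $x_p=x_1-x_4$, and a closing step that either forces $x_1=0$ (for $p\le 9$) or forces $x_4\le x_3\le x_2\le x_1$ and invokes Proposition~\ref{Prop_monotonic} (for $p=10$). The only deviations are cosmetic and harmless: the paper dispatches $p\le 7$ in one stroke via Proposition~\ref{P:todosPositivos} rather than by hand, and in your $p=10$ dichotomy the branch $x_2=x_3$ actually forces $x_2=x_3=x_4$ (via $x_7=x_4-x_3\le 0$, which already gives monotonicity and suffices) rather than the stated $x_1=x_2=x_3=x_4$.
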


\begin{proof}
We know that $\{1,8,11\}\in \mathrm{Per}(F_4)$: 
the sequence $\mathbf{\overline{0}}=(0,0,0,\ldots)$ has period $1$; the initial conditions $(x,0, x, \alpha)$, with $x>0, \alpha\in[0,x]$, generate a $8$-cycle; and if the initial conditions are monotonic, with $x_1\neq 0$, then the sequence $(x_n)$ has period $11$ according to Proposition~\ref{Prop_monotonic}. 

Next, suppose that $(x_n)$ is a $p$-periodic sequence of period $p\leq 10.$ Without loss of generality, assume that 
$x_1=\max\{x_j:j\geq 1\}>0$. 
\begin{itemize}
\item If $p\leq 7$, by Proposition~\ref{P:Positivos} we find that $x_1,\ldots, x_p$ are non-negative. So, by Proposition~\ref{P:todosPositivos}, either $p=1$ or $p=8$. Thus,  $p=1$. 
\item If $p=9$, Proposition~\ref{P:Positivos} gives  $x_5\leq 0, x_6\leq 0$, and  $x_j\geq 0$ for $j=1,2,3,4,7,8,9$. Moreover, by the recurrence,
\begin{equation}\label{Eq:pas1}
x_8=\max\{x_7,x_6,x_5,0\}-x_4=x_7-x_4,
\end{equation}
\begin{equation}\label{Eq:pas2}
x_7=\max\{x_6,x_5,x_4,0\}-x_3=x_4-x_3.
\end{equation} 
  From (\ref{Eq:pas1}) and (\ref{Eq:pas2}) we deduce $x_8=x_7-x_4=(x_4-x_3)-x_4=-x_3$, so $x_3+x_8=0$ with $x_3,x_8\geq 0$, which implies $x_3=x_8=0$. On the other hand, if we consider the recurrence acting backward, we have $x_8=\max\{x_9,x_1,x_2,0\}-x_3=x_1-x_3$, and consequently $x_1=0$, so the sequence is $\mathbf{\overline{0}}$, a contradiction. 
	\item If $p=10$, now Proposition~\ref{P:Positivos} yields $x_j\geq 0$ for $j\in\{1,2,3,4,8,9,10\}$, whereas $x_5\leq 0, x_7\leq 0$. 
	From $x_7=\max\{x_6,x_5,x_4,0\}-x_3=\max\{x_6,x_4\}-x_3\leq 0$, we deduce that $\max\{x_6,x_4\}\leq x_3,$ in particular, 
	$x_4\leq x_3.$ By periodicity acting backward, $x_4=\{x_3,x_2,x_1,0\}-x_{10}=x_1-x_{10},$ so $x_{10}=x_1-x_4$. Similarly, 
		$x_9=x_1-x_3$ and $x_8=x_1-x_2$. By using that $x_5\leq 0$, and $x_5=\max\{x_6,x_7,x_8,0\}-x_9=\max\{x_6,x_8\}-x_9$, we deduce 
		that $x_8\leq x_9$, or equivalently, $x_3\leq x_2$. Therefore, we have 	$x_4\leq x_3 \leq x_2$ and $x_2\leq x_1$. 
		From Proposition~\ref{Prop_monotonic} we conclude that $(x_n)$ is $11$-periodic, a contradiction.
\end{itemize}
\end{proof}

Finally, we will define an equivalence relation in $\mathbb{R}^4$, which will be very useful in the sequel. Notice that, by Equation (\ref{Eq:GkInversa}), for given initial conditions, we can build a unique sequence $(x_n)_{n\in\mathbb{Z}}$. 

\begin{definition} \label{Def_equivalence}
Let $\mathbf{x}, \mathbf{y} \in \mathbb{R}^4$. We will say that $\mathbf{x}=(x_1,x_2,x_3,x_4) \sim \mathbf{y}=(y_1,y_2,y_3,y_4)$ if and only if $\mathbf{x}$ and $\mathbf{y}$ generate under Equations (\ref{Eq:G4}) and (\ref{Eq:GkInversa}) the same sequences $(x_n)_{n\in\mathbb{Z}}$ and $(y_n)_{n\in\mathbb{Z}}$ up to a shift. In particular, $\mathbf{x}\sim \mathbf{y}$ if $\mathbf{x}$ and $\mathbf{y}$ generate the same periodic sequence under Equation (\ref{Eq:G4}).
\end{definition}

Notice that $\sim$ is an equivalence relation. For instance, $(x,y,z,y) \sim (x,z,z,y)$ with $x > y > z > 0$, since under Equation (\ref{Eq:G4}) the tuple evolves as follows: $$x, y, z, y, y-x, 0, y-z, -z, x-z, x-z, x-y, x, z, z, y.$$

\section{Description of periodic sequences. Characterization of the set of periods $\mathrm{Per}(F_4)$} \label{S:PerF4}
In this section, we deeply analyze the possible form of a periodic sequence of Equation~(\ref{Eq:G4}), or to better say, we try to locate the possible configurations of the initial conditions in order to obtain a periodic sequence. By carrying out this study we will be able to set the associate period of the periodic sequence.
We divide this section as follows. The first part will be devoted to study the movement of a tuple of initial conditions $(x_1,x_2,x_3,x_4)$ with $x_1 = \max\{x_n:n\geq 1\}$ that generate a periodic sequence; we will see that they can be divided in five Cases $C_i, i=1, \ldots, 5$, and that the orbit of a solution visits these cases in a concrete way provided that the initial conditions verify a particular property that will be called Condition U. The study of the orbit's movement by the different cases will provide us a form of computing the period of the solution (by adding blocks of ten or eleven elements) in most situations. In the second part, Subsection \ref{Sec_controversial}, we will start analysing what initial conditions yield to periodicity between cases, that is, whenever periodicity holds in the middle of the corresponding block of ten or eleven iterations that goes from a case $C_i$ into another $C_j$. We will find these tuples, that we will call controversial cases, and by using Definition \ref{Def_equivalence} we will reduce them into two classes of equivalence, namely $(x,y,0,z)$ and $(x,z,y,0)$, where $x\geq y \geq z \geq 0$ and $x>z$. Additionally, in Subsection \ref{Sec_specific}, we will determine the period of the two classes mentioned above. Next, we will focus on the initial conditions that verify more than one case $C_i$. For instance, $(x,y,z,y)$ with $x>y>z\geq 0$ satisfies $C_1$ and $C_4$. These tuples, which form the intersection between cases, will be analysed in detail in Subsection \ref{Sec_intersection}, and we will find their corresponding periods. After that, Subsection \ref{Sec_diagram} is devoted to the study of the remaining possibility, i.e., the initial conditions belong to a unique case $C_i$ and the movement of those initial conditions is directed by Diagram \ref{Diagrama} (see Figure \ref{Diagrama} described in the forthcoming Subsection \ref{Sec_routes}), starting and ending in the same case $C_i$ by the corresponding blocks of ten or eleven iterations shown in Proposition \ref{prop_diag} (we will call this property Condition U).
Finally, we gather in Subsection \ref{Sec_main} the study of the previous possibilities in the form of a main theorem, Theorem \ref{Th_PerF4}, on the set of periods of Equation~(\ref{Eq:G4}).

\subsection{The routes of periodic solutions and their periods} \label{Sec_routes}
From now on, let us suppose that the solution $(x_n)$ is $p$-periodic with $p\geq 12$. The following result establishes that, if $x_1$ is the biggest element of the solution, $(x_n)$, then indefinitely we will obtain again $x_1$ after ten or eleven iterations.
\begin{proposition} \label{prop_diag}
Let $(x_n)$ be a periodic sequence of period $p$, with $x_1=\max\{x_j: 1\leq j \leq p\}$. Then either $x_{11}=x_1$ or $x_{12}=x_1$. In both cases, $x_j, x_{j+1}, x_{j+2}, x_{j+3}$ are non-negative for $j=11$ in the first case and for $j=12$ in the second one.
\end{proposition}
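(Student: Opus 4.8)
Throughout write $a:=x_1$ for the peak value. The plan is to separate the two assertions: the non-negativity claim will follow at once from Corollary~\ref{lemma0}, so the entire content is the statement that $a$ reappears at index $11$ or $12$, which I would prove by iterating~(\ref{Eq:G4}) forward from the peak window $(a,x_2,x_3,x_4)$.

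For the set-up, note that since $p\geq 12$ the solution is not $\mathbf{\overline{0}}$, so $a>0$; and because $a=\max\{x_j:1\leq j\leq p\}$, Proposition~\ref{P:Positivos} gives $0\leq x_2,x_3,x_4\leq a$, whence $x_5=\max\{x_2,x_3,x_4,0\}-a\leq 0$. It is precisely the inequalities $x_2,x_3,x_4\geq 0$ (furnished by periodicity through Proposition~\ref{P:Positivos}) that I expect to be essential: they prevent the forward orbit from overshooting $a$, and without them the value $a$ can be exceeded within a few steps. Granting the first assertion, the second is immediate: if $x_{11}=a$ apply Corollary~\ref{lemma0} with $j=11$, and if $x_{12}=a$ apply it with $j=12$; either way $x_{j+1},x_{j+2},x_{j+3}\geq 0$ while $x_j=a\geq 0$, so the four terms $x_j,\dots,x_{j+3}$ are non-negative, as required.

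The heart of the matter is therefore $x_{11}=a$ or $x_{12}=a$. I would first remove the monotone configuration: if $x_2\geq x_3\geq x_4$, then $x_1\geq x_2\geq x_3\geq x_4$ are four consecutive monotone terms, so by Proposition~\ref{Prop_monotonic} the solution would be an $11$-cycle, contradicting $p\geq 12$. Hence $(x_2,x_3,x_4)$ must contain an ascent; splitting on its order type (and on the attendant ties and vanishings) gives the five configurations $C_i$ to be analysed. For each, I would iterate~(\ref{Eq:G4}) from $(a,x_2,x_3,x_4)$, branching whenever a $\max$ is evaluated according to which argument is largest. Using $x_2,x_3,x_4\in[0,a]$ and $x_5\leq 0$, each of $x_5,\dots,x_{12}$ then comes out as an explicit piecewise-linear expression in $a,x_2,x_3,x_4$, and one reads off directly that either $x_{11}=a$ or $x_{12}=a$.

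I expect the only real difficulty to be the bookkeeping of this case analysis rather than any single idea. The recurrence branches at essentially every step, so within each order type there are further sub-cases coming from ties among $x_2,x_3,x_4$, from a vanishing coordinate, and from the sign of intermediate differences such as $\max\{x_3,x_4\}-x_2$; each leaf must be carried eight steps without sign errors. Two devices keep this under control: the exclusion of the non-increasing type by Proposition~\ref{Prop_monotonic} above, and the time-reversal symmetry of the equation. Indeed, if $(x_n)$ solves~(\ref{Eq:G4}) then so does $y_n:=x_{2-n}$ (this is just the inverse recurrence~(\ref{Eq:GkInversa})); $y$ again has its peak at index $1$, and by Proposition~\ref{P:Positivos} its window is $(a,a-x_4,a-x_3,a-x_2)$. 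Pairing each order type of $(x_2,x_3,x_4)$ with its reflection transfers a computed branch to its mirror image and so roughly halves the forward iterations that must be carried out by hand.
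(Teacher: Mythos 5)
Your core route coincides with the paper's: Proposition~\ref{P:Positivos} places the peak window $(a,x_2,x_3,x_4)$ in $[0,a]^4$, the non-increasing window is set aside via Proposition~\ref{Prop_monotonic} (the paper, not leaning on $p\geq 12$ at this point, instead notes that this window gives an $11$-cycle or the equilibrium, for which the conclusion holds anyway), and the five remaining order types of $(x_2,x_3,x_4)$ are iterated forward through~(\ref{Eq:G4}), with sub-branches such as $x_3\gtrless x_2+x_4$, until the peak reappears at $x_{11}$ or $x_{12}$. Your one real deviation on the second assertion --- invoking Corollary~\ref{lemma0} at $j=11$ or $j=12$ instead of computing $x_{13},x_{14},x_{15}$ explicitly --- is correct and is a genuine economy (the paper computes those extra terms because it needs their exact values later, for Table~\ref{Table_cases}).

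The genuine gap is the time-reversal device: it cannot ``transfer a computed branch to its mirror image,'' and if you rely on it the reflected cases are left unproven. The symmetry itself is correct: $y_n:=x_{2-n}$ solves~(\ref{Eq:G4}), has maximum $a$ at index $1$, and has window $(a,a-x_4,a-x_3,a-x_2)$, so reflection pairs type (i) with type (iii), type (ii) with type (iv), and fixes type (v). But reflection reverses time. If type (iii) has been computed forward and $x$ has a type-(i) peak window, then applying the type-(iii) conclusion to $y$ gives $y_{11}=a$ or $y_{12}=a$, i.e.\ $x_{-9}=a$ or $x_{-10}=a$: the peak recurs $10$ or $11$ steps \emph{before} $x_1$. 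That is not the assertion $x_{11}=a$ or $x_{12}=a$, and it cannot be upgraded to it by any general argument: as a combinatorial statement about the set $S$ of peak positions in $\mathbb{Z}_p$, ``each $s\in S$ has $s-10\in S$ or $s-11\in S$'' does not imply ``each $s\in S$ has $s+10\in S$ or $s+11\in S$'' (take $S=\{0,11,21,22\}$ in $\mathbb{Z}_{31}$: the first property holds at every element, yet $22+10\notin S$ and $22+11\notin S$). The directional mismatch is also visible concretely: the paper's case (i) always produces $x_{12}=a$, an $11$-step return, whereas the reflected window can produce a $10$-step return $x_{11}=a$ --- for instance $(10,9,1,8)$ is type (i), while its reflection $(10,2,9,1)$ is type (iii) with $x_3\geq x_2+x_4$ --- so no forward-to-forward substitution between mirror cases can exist. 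The remedy is simply to do what the paper does: carry out the eight-step forward iteration in all five order types; with that replacement your outline becomes the paper's proof.
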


\begin{proof}
By Proposition~\ref{P:Positivos}, we know that $x_2,x_3,x_4$ are non-negative. Realize that if $x_1\geq x_2\geq x_3\geq x_4$, we have an $11$-cycle or the equilibrium point if $x_1=0$. From now on, we assume that $x_1>0$ and distinguish several cases:
\begin{itemize}
\item[(i)] Suppose $x_1 \geq x_2 \geq x_4\geq x_3\geq 0$. Then, by iterating under (\ref{Eq:G4}), 
\begin{eqnarray*}
	x_5&=&x_2-x_1 \leq 0,\\
	x_6&=& \max\left\{x_5,x_4,x_3,0\right\}- x_2=x_4-x_2 \leq 0, \\
	x_7&=& \max\left\{x_6,x_5,x_4,0\right\}- x_3=x_4-x_3,\\
	x_8&=& \max\left\{x_7,x_6,x_5,0\right\}- x_4=x_7-x_4=-x_3 \leq 0,\\
	x_9&=& \max\left\{x_8,x_7,x_6,0\right\}- x_5=x_7-x_5=x_4-x_3-x_2+x_1,\\
	x_{10}&=& \max\left\{x_9,x_8,x_7,0\right\}- x_6=x_9-x_6= x_1-x_3,\\            x_{11}&=& \max\left\{x_{10},x_9,x_8,0\right\}- x_7= x_{10}-x_7 = x_1-x_4,\\
	x_{12}&=& \max\left\{x_{11},x_{10},x_9,0\right\}- x_8 = x_1-x_3+x_3=x_1.
\end{eqnarray*}
Moreover, we also find that the following three terms are non-negative: $x_{13} = x_2 + x_3 - x_4; \ \ x_{14} = x_3; \ \ x_{15} = x_4.$ 
				
\item[(ii)] Let $x_1 \geq x_3 \geq x_4\geq x_2\geq 0$. In this case, 
$$x_5=x_3-x_1 \leq 0, \ \ x_6 = \max\left\{x_5,x_4,x_3,0\right\}- x_2=x_3-x_2, $$
and $x_7=\max\left\{x_6,x_5,x_4,0\right\}- x_3=\max\left\{x_3-x_2,x_4,0\right\}-x_3.$

-- If additionally $x_3\geq x_2+x_4,$ then we continue
\begin{eqnarray*}
	x_7&=&\max\left\{x_6,x_5,x_4,0\right\}- x_3=x_3-x_2-x_3=-x_2\leq 0,\\
	x_8&=& \max\left\{x_7,x_6,x_5,0\right\}- x_4=x_6-x_4=x_3-x_2-x_4 \geq 0,\\
	x_9&=& \max\left\{x_8,x_7,x_6,0\right\}- x_5=x_3-x_2-x_3+x_1=x_1-x_2,\\
	x_{10}&=& \max\left\{x_9,x_8,x_7,0\right\}- x_6=x_1- x_2-x_3+x_2=x_1-x_3,\\    x_{11}&=& \max\left\{x_{10},x_9,x_8,0\right\}- x_7= x_{9}-x_7 = x_1-x_2+x_2=x_1;
\end{eqnarray*}
even more, the next three terms are also non-negative (recall that $x_3\geq x_2+x_4$): $x_{12} = (x_1 - x_3) + (x_2 + x_4); \ \ x_{13} = x_2; \ \ x_{14} = x_3.$

-- If, in the contrary, $x_3\leq x_2+x_4,$ we will now obtain 
\begin{eqnarray*}
	x_7&=&\max\left\{x_6,x_5,x_4,0\right\}- x_3=x_4-x_3 \leq 0,\\
	x_8&=& \max\left\{x_7,x_6,x_5,0\right\}- x_4=x_6-x_4=x_3-x_2-x_4 \leq 0,\\
	x_9&=& \max\left\{x_8,x_7,x_6,0\right\}- x_5= x_6-x_5=x_3-x_2-x_3+x_1=x_1-x_2,\\
	x_{10}&=& \max\left\{x_9,x_8,x_7,0\right\}- x_6=x_9-x_6=x_1-x_2-x_3+x_2=x_1-x_3,\\                    
	x_{11}&=& \max\left\{x_{10},x_9,x_8,0\right\}- x_7= (x_1-x_4)+(x_3-x_2)\geq 0,\\
	x_{12}&=& \max\left\{x_{11},x_{10},x_9,0\right\}- x_8= x_{11}-x_8 =x_1\geq 0,
\end{eqnarray*} $x_{13} = x_2, x_{14} = x_3$ and $x_{15} = x_4 + x_2 - x_3$ are non-negative.

\item[(iii)] Let $x_1 \geq x_3 \geq x_2\geq x_4\geq 0$. The first terms of the sequence are:
\begin{equation*}
	x_5=x_3-x_1 \leq 0,\, \hspace{5mm} x_6=x_3-x_2,\, \hspace{5mm} x_7=\max\left\{x_3-x_2,x_4\right\}-x_3.
\end{equation*}

Now, we must discern two possibilities, the procedure is similar, so we left in charge of the reader the checking of the computations: 

-- if $x_3\geq x_2+x_4,$ then $x_7 = -x_2\leq 0; \ \ x_8 = x_3-x_2-x_4; \ \  x_9 =  x_1-x_2; \ \  x_{10} = x_1-x_3; \ \  x_{11} = x_1,$ and the subsequent three terms are non-negative: $x_{12}= (x_1-x_3)+(x_2+x_4); \ \ x_{13}= x_2; \ \ x_{14}= x_3$.

-- if $x_3\leq x_2+x_4,$ and therefore $0\leq x_1-x_2-x_4+x_3\leq x_1$, we will have $x_7= x_4-x_3\leq 0; \ \ x_8= x_3-x_2-x_4 \leq 0; \ \ x_9= x_1-x_2; \ \ 
x_{10}= x_1-x_3;\ \ x_{11}= x_1-x_2-x_4+x_3; \ \ x_{12}=x_1;$ and $x_{13}= x_2; \ \ x_{14}= x_3; \ \ x_{15}= x_2+x_4-x_3. $
				
\item[(iv)] Consider $x_1 \geq x_4 \geq x_2\geq x_3\geq 0$. During the computation of the terms we have to use that $x_1-x_3\geq x_1+x_2-x_3-x_4\geq 0$, 
$x_2-x_3-x_4\leq 0$ and $x_1+x_2\geq x_3+x_4$. Then, the reader is encouraged to check that $x_5=x_4-x_1 \leq 0; \ \ x_6= x_4-x_2; \ \ x_7= x_4-x_3; \ \ x_8= -x_3 \leq 0; \ \ x_9= x_1-x_3; \ \ x_{10}= x_1+x_2-x_3-x_4; \ \ x_{11}= x_1-x_4; \ \ x_{12}= x_1$ and $x_{13}= x_3; \ \ x_{14}= x_4+x_3-x_2; \ \ x_{15} = x_4.$
 
 \item[(v)] Consider $x_1 \geq x_4 \geq x_3\geq x_2\geq 0$. It is easy to check that $x_5=x_4-x_1 \leq 0; \ \ x_6= x_4-x_2; \ \ x_7= x_4-x_3; \ \ x_8= -x_2 \leq 0; \ \ x_9= x_1-x_2; \ \ x_{10}= x_1-x_4; \ \ x_{11}= x_1-x_2-x_4+x_3; \ \ x_{12}= x_1$  and $x_{13}= x_2; \ \ x_{14}= x_4; \ \	x_{15}= x_2+x_4-x_3.$   

\end{itemize}
\end{proof}

By inspection of the different cases developed in the proof of Proposition \ref{prop_diag}, we observe that if the orbit were an 11-cycle, then there would exist four consecutive monotonic terms. For example in case (i), if the initial conditions generate an $11$-cycle, then $x_{13}=x_2$ implies $x_3=x_4$, and $(x_1,x_2,x_3,x_3)$ are four consecutive monotonic terms. Jointly with Proposition \ref{Prop_monotonic} we obtain:
				
\begin{corollary}
Let $(x_n)$ be a solution of Equation (\ref{Eq:G4}). Then the solution is an 11-cycle if and only if there exist four consecutive monotonic terms.
\end{corollary}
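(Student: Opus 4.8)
The statement is a biconditional, and one implication is already in hand: four consecutive monotonic terms force an $11$-cycle by Proposition~\ref{Prop_monotonic}. So the plan is to prove only the converse, that every $11$-cycle contains four consecutive monotonic terms. First I would normalize. By Proposition~\ref{P:maxNoNeg} the maximum of a periodic solution is non-negative, and since an $11$-cycle is not the constant solution $\mathbf{\overline{0}}$, that maximum is strictly positive; after a cyclic shift I may assume $x_1=\max\{x_j:1\le j\le 11\}>0$. By Proposition~\ref{P:Positivos} then $x_2,x_3,x_4\ge 0$. If $x_2\ge x_3\ge x_4$, then $(x_1,x_2,x_3,x_4)$ is already four consecutive (non-increasing) monotonic terms and there is nothing to prove. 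Otherwise $(x_2,x_3,x_4)$ realizes one of the five remaining orderings, i.e. exactly one of the Cases (i)--(v) appearing in the proof of Proposition~\ref{prop_diag}.

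The engine of the argument is that periodicity lets me read the terms past $x_{11}$ in two independent ways. On one hand, period $11$ forces $x_{11}=x_0$, $x_{12}=x_1$, $x_{13}=x_2$, $x_{14}=x_3$, $x_{15}=x_4$; on the other, the inverse recurrence~(\ref{Eq:GkInversa}) gives $x_0=\max\{x_1,x_2,x_3,0\}-x_4=x_1-x_4$, so that $x_{11}=x_1-x_4$ holds for \emph{every} $11$-cycle normalized as above. Comparing these identities with the explicit formulas for $x_{11},\ldots,x_{15}$ that are computed inside each case of Proposition~\ref{prop_diag} yields linear equations that force coincidences among $x_2,x_3,x_4$. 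For instance, in Case~(i) the computed value $x_{13}=x_2+x_3-x_4$ must equal $x_2$, whence $x_3=x_4$, and together with the case ordering $x_1\ge x_2\ge x_4\ge x_3$ the quadruple $(x_1,x_2,x_3,x_4)$ becomes non-increasing.

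I would then run the same matching through the remaining cases. In Case~(ii) with $x_3\le x_2+x_4$ and in Case~(iii) with $x_3\le x_2+x_4$, the relation $x_{11}=x_1-x_4$ collapses the computed $x_{11}$ to the single equation $x_3=x_2$; in Case~(iv) the pair $x_{13}=x_2$, $x_{14}=x_3$ forces $x_3=x_2$ and $x_4=x_2$; and in Case~(v) the pair $x_{14}=x_3$, $x_{15}=x_4$ forces $x_4=x_3$ and $x_2=x_3$. In each instance the forced equalities, fed back into the ordering defining that case, turn $(x_1,x_2,x_3,x_4)$ into a non-increasing string, as desired.

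The delicate subcases are (ii) and (iii) with $x_3\ge x_2+x_4$, where the proof of Proposition~\ref{prop_diag} yields $x_{11}=x_1$ rather than a term involving $x_4$; here the forward identity $x_{12}=x_1$ alone is not enough, and I expect the main obstacle to be precisely this point. One must also invoke the backward value $x_{11}=x_1-x_4$ to conclude $x_4=0$, and only then does combining with $x_{12}=x_1$ (which gives $x_3=x_2+x_4$) together with the case ordering collapse the quadruple to a monotonic one (to $(x_1,x_2,x_2,0)$ in Case~(iii) and to the degenerate $(x_1,0,0,0)$ in Case~(ii)). The secondary care needed throughout is to check that the deduced equalities really produce \emph{monotonicity} rather than merely two equal coordinates; this is what makes me read each equality back against the defining inequalities of its case. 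Once all five cases are disposed of in this way, the converse, and hence the corollary, follows.
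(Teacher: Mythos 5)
Your proof is correct and takes essentially the same route as the paper: the forward implication via Proposition~\ref{Prop_monotonic}, and the converse by inspecting the five cases computed in the proof of Proposition~\ref{prop_diag} and matching $x_{11},\ldots,x_{15}$ against the $11$-periodicity constraints (the paper states this inspection in one line, giving only case (i) as an example). Your extra step of invoking the backward recurrence to get $x_{11}=x_0=x_1-x_4$, which settles the subcases of (ii) and (iii) with $x_3\geq x_2+x_4$ where the forward computation alone gives $x_{11}=x_1$, is a careful treatment of a detail the paper leaves implicit.
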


Notice that in cases (ii) and (iii) of the proof of Proposition~\ref{prop_diag}, the condition that establishes if $x_{11} = x_1$ or $x_{12} = x_1$ is the fact that $x_2 + x_4 \leq x_3$, or the reverse inequality, respectively. In this sense, we can sum up the cases for initial conditions in the following five:

\textbf{Case 1 (}$\mathbf{C_1}$\textbf{):} $x_1 \geq x_2 \geq x_4 \geq x_3$.

\textbf{Case 2 (}$\mathbf{C_2}$\textbf{):} $x_1 \geq x_3 \geq \max\{x_2, x_4\}$ with $x_3 \geq x_2 + x_4$.

\textbf{Case 3 (}$\mathbf{C_3}$\textbf{):} $x_1 \geq x_3 \geq \max\{x_2, x_4\}$ with $x_3 \leq x_2 + x_4$. 

\textbf{Case 4 (}$\mathbf{C_4}$\textbf{):} $x_1 \geq x_4 \geq x_2 \geq x_3$.

\textbf{Case 5 (}$\mathbf{C_5}$\textbf{):} $x_1 \geq x_4 \geq x_3 \geq x_2$.

In Figure \ref{Diagrama} we show the relationship between the different cases based on the analysis realised in Proposition \ref{prop_diag}. An arrow from Case $C_k$ to Case $C_l$ indicates that if we start with initial conditions $(x_1,x_2,x_3,x_4)$ holding the inequalities of Case $C_k$, then after $j=10$ or $j=11$ iterations the new  non-negative terms $(x_{j+1},x_{j+2},x_{j+3},x_{j+4})$ satisfy the conditions of Case $C_l$.

\begin{figure}[ht] 
\includegraphics[scale=0.45]{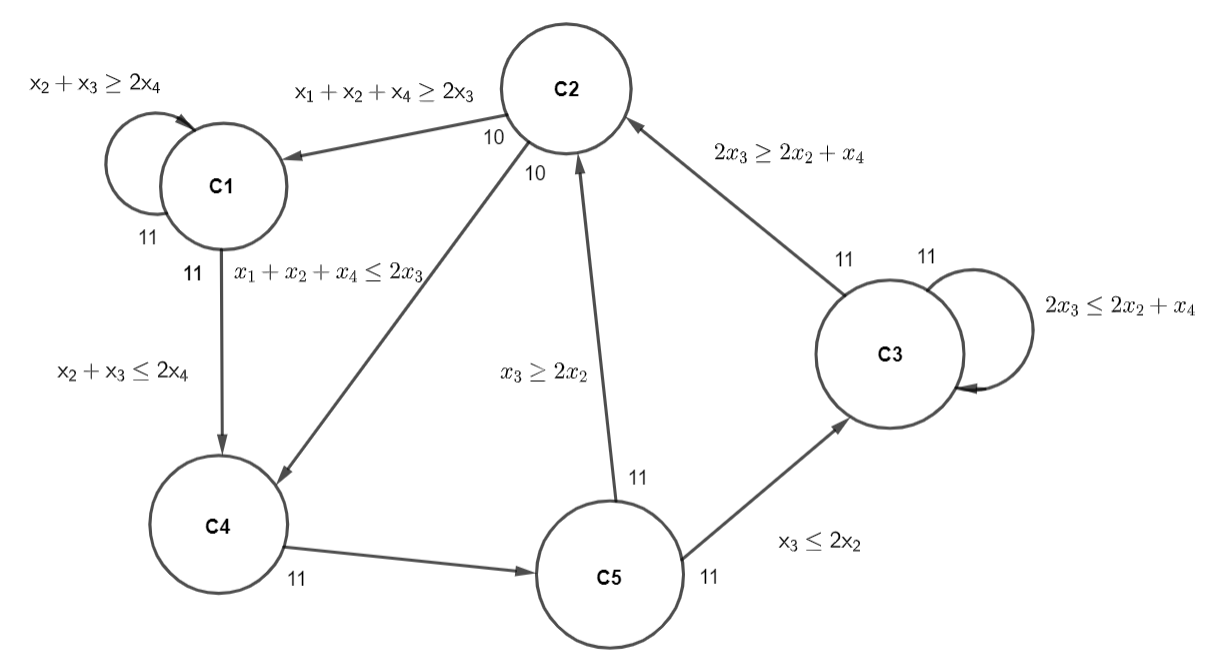}
\caption{The movement of Cases when we iterate Equation~(\ref{Eq:G4}).}
\label{Diagrama}
\end{figure}

Next, we summarize in Table \ref{Table_cases} how a tuple $(x_1,x_2,x_3,x_4)$ evolves under Equation (\ref{Eq:G4}) when it satisfies the conditions of each case. Notice that the information collected in the table follows by the inspection of the proof of Proposition \ref{prop_diag}.

\begin{table}[ht] 
\begin{tabular}{@{}cccc@{}}
\toprule
\textbf{$C_1:$} & $x_1 \geq x_2 \geq x_4 \geq x_3$ & $\xrightarrow{11}$ & $(x_{12}= x_1, x_2+x_3-x_4, x_3, x_4)$ \\ \midrule
\textbf{$C_2:$} & \begin{tabular}[c]{@{}c@{}}$x_1 \geq x_3 \geq \max\{x_2, x_4\}$\\ $x_3 \geq x_2 + x_4$\end{tabular}  & $\xrightarrow{10}$ & $(x_{11}= x_1, x_1-x_3+x_2+x_4, x_2, x_3)$ \\ \midrule
\textbf{$C_3:$} & \begin{tabular}[c]{@{}c@{}}$x_1 \geq x_3 \geq \max\{x_2, x_4\}$ \\ $x_3 \leq x_2 + x_4$\end{tabular} & $\xrightarrow{11}$ & $(x_{12}= x_1, x_2, x_3, x_2+x_4-x_3)$ \\ \midrule
\textbf{$C_4:$} & $x_1 \geq x_4 \geq x_2 \geq x_3$ & $\xrightarrow{11}$ & $(x_{12}= x_1, x_3, x_4+x_3-x_2, x_4)$ \\ \midrule
\textbf{$C_5:$} & $x_1 \geq x_4 \geq x_3 \geq x_2$ & $\xrightarrow{11}$ & $(x_{12}= x_1, x_2, x_4, x_2+x_4-x_3)$     \\ \bottomrule
\end{tabular}
\caption{Evolution of a tuple $(x_1,x_2,x_3,x_4)$ in the different cases.}
\label{Table_cases}
\end{table}

Once we have presented Table \ref{Table_cases}, let us justify the diagram of Figure~\ref{Diagrama}. If we start in $C_1$, notice that $x_{12} \geq x_{13}$, since $x_1 + x_4 \geq x_2 + x_3$. Now,
\begin{itemize}
    \item If $x_2 + x_3 \geq 2x_4$, then $x_{12} \geq x_{13} \geq x_{15} \geq x_{14}$ and we are in $C_1$.
    \item If $x_2+x_3 \leq 2x_4$, then $x_{12} \geq x_{15} \geq x_{13} \geq x_{14}$, since $x_3-x_4 \geq 0$. In this case, we are in $C_4$.
\end{itemize} 

Now, consider that the tuple satisfies $C_4$. As $x_4-x_2 \geq 0$, it follows that $x_{14} \geq x_{13}$. On the other hand, $x_3 - x_2 \leq 0$, so $x_{15} \geq x_{14}$. To sum up, $x_{12} \geq x_{15} \geq x_{14} \geq x_{13}$ and we are in $C_5$.

Next, if we start in $C_5$, $x_{12}\geq x_{14} \geq x_{13}$. Notice that $x_2 - x_3 \leq 0$, which implies $x_{14} \geq x_{15}$. So,
\begin{itemize}
    \item If $x_3 \geq 2x_2$, then $x_4 \geq 2x_2+x_4-x_3$, which implies $x_{14} \geq x_{13}+x_{15}$ and we are in $C_2$.
    \item If $x_3 \leq 2x_2$, then $x_4 \leq 2x_2+x_4-x_3$. Thus, $x_{14} \leq x_{13}+x_{15}$ and we are in $C_3$.
\end{itemize} 
    
Now, we study the case where the tuple verifies $C_3$. Then, $x_{12}\geq x_{14} \geq x_{13}, x_{15}$. Thus,
\begin{itemize}
    \item If $2x_3 \geq 2x_2+x_4$, then $x_3 \geq 2x_2+x_4-x_3$. So $x_{14} \geq x_{13}+x_{15}$ and we are in $C_2$.
    \item If $2x_3 \leq 2x_2+x_4$, then $x_3 \leq 2x_2+x_4-x_3$. Therefore $x_{14} \leq x_{13}+x_{15}$ and we are in $C_3$ again.
\end{itemize} 

Finally, we consider the case $C_2$. Due to the relation between the initial conditions, $x_{11}\geq x_{14} \geq x_{13}$. Therefore,
\begin{itemize}
    \item If $x_1+x_2+x_4 \geq 2x_3$, then $x_1-x_3+x_2+x_4 \geq x_3$, which implies $x_{12} \geq x_{14}$. So, $x_{11} \geq x_{12} \geq x_{14}\geq x_{13}$ and we are in $C_1$. 
    \item If $x_1+x_2+x_4 \leq 2x_3$, then $x_1-x_3+x_2+x_4 \leq x_3$, thus $x_{12} \leq x_{14}$. So, $x_{11} \geq x_{14} \geq x_{12}\geq x_{13}$ and we are in $C_4$.
\end{itemize}

Once we have justified the diagram, it is highly relevant to assure that any tuple of initial conditions $(x_1,x_2,x_3,x_4)$ verify only one of the five classes $C_i$. Moreover, in principle, we have two possibilities for the way in which periodicity is achieved: either doing a cycle along the diagram starting and ending in the same case, or getting again the initial conditions in the passage between one case to another one in Diagram \ref{Diagrama}. 

\subsection{Controversial cases} \label{Sec_controversial} We assume that the initial conditions of the periodic sequence $(x_n)$ are $(x_1,x_2,x_3,x_4)$, with $x_1 = \max\{x_n:n\geq1\}$. Traveling by Diagram \ref{Diagrama}, after ten or eleven iterations, these initial conditions become into $(z_1,z_2,z_3,z_4)$ belonging to some case $C_i$. We will suppose that at some point of the movement in Diagram \ref{Diagrama} the new conditions $(z_1,z_2,z_3,z_4)$ in case $C_i$ goes to the tuple $(w_1,w_2,w_3,w_4)$ living in case $C_k$, and in the transition from $C_i$ to $C_k$, we get the initial conditions $(x_1,x_2,x_3,x_4)$. Notice that  $z_1 = x_1$. 

From now on, we will use $C_{i,j}$ to mean that a tuple $(z_1,z_2,z_3,z_4)$ verifies the case $C_i$ and that we achieve periodicity in the term $z_j$, $j=2,\ldots,11$ (if $i=1,3,4,5$), or $j=2,\ldots, 10$ (if $i=2$), that is, $z_j = x_1$, $z_{j+1} = x_2$, $z_{j+2} = x_3$ and $z_{j+3} = x_4$.

Now we will study each case. Due to the analogy between cases, we will only develop in detail the first one. In the others, we will only indicate the results. From now on, $x,y,z$ will be real numbers such that $x \geq y \geq z \geq 0$.

$\bullet$ Case $C_1$: By Proposition \ref{prop_diag} we know the following terms: $z_5 = z_2 - x_1 \leq 0;$  $z_6 = z_4 - z_2 \leq 0;$ $z_7 = z_4 - z_3;$ $z_8 = -z_3 \leq 0;$  $z_9 = z_4 - z_3 - z_2 + x_1;$ $z_{10} = x_1 - z_3;$ $z_{11} = x_1 - z_4;$ $z_{12} = x_1;$ $z_{13} = z_2 + z_3 - z_4;$ $z_{14} = z_3;$  $z_{15} = z_4$.

As $x_1 > 0$, we could reach periodicity in $z_2, z_3, z_4, z_7, z_9, z_{10}$ or $z_{11}$. Let us see the seven possible cases. 

$C_{1,2}: z_2 = x_1; \ z_3 = x_2; \ z_4 = x_3; \ z_2 - x_1 = x_4$ which implies that $(z_1,z_2,z_3,z_4) = (x_1,x_1,x_2,x_3)$. This is a controversial case, $(x,x,z,y)$. 

$C_{1,3}: z_3 = x_1; \ z_4 = x_2; \ z_5 = z_2 - x_1 = x_3 \leq 0; \ z_6 = z_4 - z_2 = x_4 \leq 0$. Notice that the two last equalities   imply $x_3 = 0$ and $x_4 = 0$. Then the initial conditions of the sequence are $(x_1, x_2, 0, 0)$ which is an $11$-cycle due to the monotonicity of the initial terms. 

$C_{1,4}: z_4 = x_1; \ z_2 - x_1 = x_2 \leq 0; \ z_4 - z_2 = x_3 \leq 0; \ z_4 - z_3 = x_4$. By the second and third equality we have that $x_2 = x_3 = 0$, so the initial conditions are $(x_1, 0, 0, x_4)$. Then we have a controversial case, $(x,0,0,y)$. 

$C_{1,7}: z_7 = z_4 - z_3 = x_1; \ -z_3 = x_2 \leq 0; \ z_4 - z_3 - z_2 + x_1 = x_3; \ x_1 - z_3 = x_4.  $ If we solve the system, we get that the initial conditions of the sequence are $(x_1, 0, x_3, x_1)$, but this tuple is an 8-cycle by Proposition \ref{P:periodo8}. 

$C_{1,9}: z_9 = z_4 - z_3 - z_2 + x_1 = x_1; \ x_1 - z_3 = x_2; \ x_1 - z_4 = x_3; \ x_1 = x_4$. These conditions imply that $(z_1,z_2,z_3,z_4) = (x_1, x_2 - x_3, x_1 - x_2, x_1 - x_3)$. Moreover, it verifies the case $C_1$, so we have that $z_1 \geq z_2 \geq z_4 \geq z_3$, which yields to $x_2 = x_1$ and then we are able to simplify the expression of the tuple into $(x_1, x_1-x_3, 0, x_1-x_3)$. In this sense we have reached another controversial case, that is, $(x,y,0,y)$. 

$C_{1,10}: z_{10} = x_1 - z_3 = x_1; \ x_1-z_4 = x_2; \ x_1 = x_3; \ z_2+z_3-z_4 = x_4$. That yields to $(z_1,z_2,z_3,z_4) = (x_1,x_1+x_4-x_2,0,x_1-x_2)$, which is a controversial case of the type $(x,y,0,z)$. 

$C_{1,11}: z_{11} = x_1 - z_4 = x_1; \ x_1 = x_2; \ z_2+z_3-z_4 = x_3; \ z_3 = x_4$. Notice that by the second and third equation we get $z_4 = 0$ and $z_2= x_3 - x_4$. Therefore, the tuple is of the form $(x_1, x_3-x_4,x_4,0)$, but as they verify the case $C_1$ we have that $x_4 = 0$. So, the four terms are monotonic and we can assure that the sequence is an 11-cycle. 

$\bullet$ Case $C_2:$ We compute the following terms of the tuple $(x_1,z_2,z_3,z_4)$. By Proposition \ref{prop_diag}: $z_5 = z_3 - x_1 \leq 0$, $z_6 =z_3 - z_2$, $z_7 = -z_2 \leq 0$, $z_8 = z_3 - z_2 - z_4$, $z_9 = x_1 - z_2$, $z_{10} =x_1 - z_3$, $z_{11} = x_1$, $z_{12} = x_1 - z_3 + z_2+z_4$, $z_{13} = z_2 $ and $z_{14} = z_3$. Now, if we consider the positive terms, the cycle can take place in $z_2, z_3, z_4,$ $z_6, z_8, z_9, z_{10}$. We indicate what happens in each case.

$C_{2,2}:$ we have an eleven cycle by monotonicity of the initial conditions. 

$C_{2,3}:$ we get the controversial case $(x,0,z,y)$.

$C_{2,4}:$ the initial conditions are $(x_1,0,x_1,0)$ and we have an eight cycle. 

$C_{2,6}:$ the initial conditions are $(x_1,0,x_3,x_1)$ and we have an eight cycle. 

$C_{2,8}:$ the initial conditions are $(x_1, x_1, 0, x_1)$, which is an eight cycle. 

$C_{2,9}:$ we get the controversial case $(x,0,y,z)$. 

$C_{2,10}:$ we have an eleven cycle by monotonicity of the initial conditions.

$\bullet$ Case $C_3$: We know the following terms by Proposition \ref{prop_diag}:  $z_5 = z_3 - x_1 \leq 0$, $z_6 =z_3 - z_2$, $z_7 = z_4 - z_3 \leq 0$, $z_8 = z_3 - z_2 - z_4\leq 0$, $z_9 = x_1 - z_2$, $z_{10} =x_1 - z_3$, $z_{11} = x_1 - z_4 + z_3 - z_2$ , $z_{12} = x_1$, $z_{13} = z_2 $, $z_{14} = z_3$ and $z_{15} = z_4 + z_2 - z_3$. As in the previous cases, we can only have the cycle in the positive terms, that is, $z_2, z_3, z_4, z_6, z_9, z_{10}, z_{11}$. Let us study the different possibilities. 

$C_{3,2}:$ the initial conditions are monotonic, so we have an eleven cycle. 

$C_{3,3}:$ the initial conditions are the controversial case $(x,y,0,z)$. 

$C_{3,4}:$ we have a controversial case of the form $(x,0,y,0)$. 

$C_{3,6}:$ the initial conditions are $(x_1,0,0,x_1)$ which is an eight cycle.

$C_{3,9}:$ we arise to the controversial case $(x,0,z,y)$. 

$C_{3,10}:$ we have monotonicity, so it is an $11$-cycle. 

$C_{3,11}:$ we get the controversial case $(x,x,z,y)$.

$\bullet$ Case $C_4$: By Proposition \ref{prop_diag}, the sequence evolves as follows: $z_5 = z_4 - x_1 \leq 0$, $z_6 =z_4 - z_2$, $z_7 = z_4 - z_3$, $z_8 = -z_3 \leq 0 $, $z_9 = x_1 - z_3$, $z_{10} = x_1 + z_2 - z_3 - z_4$, $z_{11} = x_1 - z_4$, $z_{12} = x_1$, $z_{13} = z_3 $, $z_{14} = z_4 + z_3 - z_2$ and $z_{15} = z_4$. Now, we must study when the positive terms, $z_2, z_3, z_4, z_6, z_7, z_9, z_{10},$ $z_{11}$ are equal to $x_1$ and we have the cycle. 

$C_{4,2}:$ yields to a controversial case of the form $(x,x,y,x)$. 

$C_{4,3}:$ $(z_1,z_2,z_3,z_4)=(x_1,x_1-x_4,0,x_1)$, that is an eight cycle.

$C_{4,4}:$ we arise to the controversial case $(x,0,z,y)$.

$C_{4,6}:$ the tuple $(z_1,z_2,z_3,z_4) = (x_1,0,0,x_1)$, which is an eight cycle. 
$C_{4,7}:$ the tuple $(z_1,z_2,z_3,z_4)=(x_1,x_4,0,x_1)$, which is an eight cycle. 
$C_{4,9}:$ we get a controversial case of the type $(x,z,0,y)$.

$C_{4,10}:$ we get the controversial case $(x,y,0,y)$. 

$C_{4,11}:$ we have monotonicity in the tuple, so it is an eleven cycle.

$\bullet$ Case $C_5$: We proceed analogously to the previous cases:
$z_5 = z_4 - x_1 \leq 0$, $z_6 =z_4 - z_2$, $z_7 = z_4 - z_3$, $z_8 = -z_2 \leq 0$, $z_9 = x_1 - z_2$, $z_{10} = x_1- z_4$, $z_{11} = x_1 - z_2 - z_4 + z_3$, $z_{12} = x_1$, $z_{13} = z_2 $, $z_{14} = z_4$ and $z_{15} = z_2 + z_4 - z_3$. Notice that the cycle can occur in the terms $z_2,z_3,z_4,z_6,z_7,z_9,z_{10}$ and $z_{11}$, since they are non-negative. Let us study each case. 

$C_{5,2}:$ we get the controversial case $(x,y,x,0)$.

$C_{5,3}:$ we get the controversial case $(x,x,0,y)$. 

$C_{5,4}:$ the initial conditions are of the form $(x,0,y,z)$. 

$C_{5,6}:$ $(z_1,z_2,z_3,z_4) = (x_1,0,x_1-x_2,x_1)$ and we have an eight cycle.

$C_{5,7}:$ $(z_1,z_2,z_3,z_4) = (x_1,0,0,x_1)$ is an eight cycle.

$C_{5,9}:$ we have a controversial case $(x,0,y,z)$.

$C_{5,10}:$ we achieve an eleven cycle due to monotonicity in the initial terms.

$C_{5,11}:$ yields to the controversial case $(x,x,0,y)$.

After that analysis, we collect the controversial cases in the following table (remember that $x \geq y \geq z \geq 0$):
\begin{table}[h]
\begin{tabular}{ccccc}
$(x,x,z,y)$ & $(x,0,0,y)$ & $(x,y,0,y)$  & $(x,y,0,z)$ & $(x,x,0,y)$ \\
$(x,0,y,0)$ & $(x,x,y,x)$  & $(x,0,z,y)$ & $(x,z,0,y)$ & $(x,0,y,z)$ 
\end{tabular}
\end{table}

Nevertheless, we can reduce some of the previous tuples into equivalent classes (see Definition \ref{Def_equivalence}).

\begin{proposition} \label{prop_controversial}
Let $x,y,z$ be real numbers such that $x \geq y \geq z \geq 0$. Then we have the following relations: $$(x,x,z,y) \sim (x,z,y,0); \ \ \  (x,x,y,x) \sim (x,y,x,0); \ \ \ (x,x,0,y) \sim (x,0,y,0).$$
\end{proposition}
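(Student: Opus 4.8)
The plan is to read each of the three asserted equivalences directly off Definition~\ref{Def_equivalence}: two $4$-tuples are equivalent exactly when one occurs as a window of four consecutive terms in the bi-infinite orbit generated by the other, so that both produce the same two-sided sequence up to a shift. Hence it suffices, for each pair, to exhibit the right-hand tuple as a shift of the left-hand one inside a single orbit. The decisive observation is that all three left-hand tuples share the shape $(x,x,\ast,\ast)$ with $x$ the maximal entry, which forces a collapse after exactly one forward step of Equation~(\ref{Eq:G4}).

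Concretely, I would compute only the fifth term. Writing $(x_1,x_2,x_3,x_4)$ for the left-hand tuple in each case, we always have $x_1=x_2=x$ and, because $x\geq y\geq z\geq 0$, the remaining entries satisfy $x_3,x_4\in[0,x]$; therefore $\max\{x_4,x_3,x_2,0\}=x_2=x$, and Equation~(\ref{Eq:G4}) gives $x_5=\max\{x_4,x_3,x_2,0\}-x_1=x-x=0$. Consequently the window $(x_2,x_3,x_4,x_5)$ equals $(x,x_3,x_4,0)$, and inserting the specific third and fourth coordinates yields exactly the right-hand tuple: $(x,z,y,0)$ when the data is $(x,x,z,y)$, $(x,y,x,0)$ when it is $(x,x,y,x)$, and $(x,0,y,0)$ when it is $(x,x,0,y)$. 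Since this target tuple is literally the shift by one position of the starting tuple along the same orbit $(x_n)_{n\in\mathbb{Z}}$, Definition~\ref{Def_equivalence} delivers the claimed equivalence in each of the three cases.

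There is essentially no obstacle beyond this single maximum evaluation; the only point to verify is the inequality chain $x\geq x_3,x_4\geq 0$ guaranteeing $\max\{x_4,x_3,x_2,0\}=x$, which is immediate from the standing hypothesis $x\geq y\geq z\geq 0$ in all three cases. The conceptual point worth stressing is that, unlike the worked example $(x,y,z,y)\sim(x,z,z,y)$ preceding the statement, where the matching window only reappears after a full block of iterations, here a single application of the recurrence already suffices, precisely because the repeated leading entry $x_2=x$ makes $x_5$ vanish at once.
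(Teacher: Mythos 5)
Your proof is correct and follows essentially the same route as the paper's: compute one forward step of Equation~(\ref{Eq:G4}), note that the repeated leading entry forces $x_5=\max\{x_4,x_3,x_2,0\}-x_1=x-x=0$, and conclude via Definition~\ref{Def_equivalence} that the shifted window $(x_2,x_3,x_4,0)$ is equivalent to the original tuple. Your observation that the single computation $x_5=0$ handles all three cases uniformly is exactly the paper's argument (it works one case and abbreviates the other two), so there is nothing to add.
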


\begin{proof}
We only have to compute each tuple under Equation (\ref{Eq:G4}). For instance, if $x_1=x$, $x_2=x$, $x_3=z$ and $x_4 =y$, then $x_5 = 0$ and  $(x_1,x_2,x_3,x_4) \sim (x_2,x_3,x_4,x_5)$, or equivalently, $(x,x,z,y) \sim (x,z,y,0)$. The rest of cases can be sum up as follows: $(x,x,y,x,0); (x,x,0,y,0).$
\end{proof}

Therefore, we only have to consider the following nine cases:
$$(x,0,0,y); \ \ (x,y,0,y); \ \ (x,0,y,0); \ \ (x,y,x,0); $$
$$(x,z,y,0); \ \ (x,y,0,z); \ \ (x,z,0,y); \ \ (x,0,y,z); \ \ (x,0,z,y). $$ 
Notice that the first four cases formed by two positive variables, $x,y$, are included in the cases with three non negative variables, $x,y,z$. For instance, $(x,0,0,y)$ is of the form $(x,z,0,y)$ with $z=0$; $(x,y,0,y)$ is of the form $(x,z,0,y)$ with $z=y$; $(x,0,y,0)$ is like the initial conditions $(x,0,y,z)$ with $z=0$; and $(x,y,x,0)$ behaves as $(x,z,y,0)$ with $x=y$. Therefore, our controversial cases are reduced to:
\begin{equation} \label{Cases_equiv}
     (x,y,0,z); \ \ (x,z,0,y); \ \ (x,0,y,z); \ \ (x,0,z,y); \ \ (x,z,y,0).
\end{equation}

Additionally, the initial conditions $(x,z,0,y)$ and $(x,0,y,z)$ generate the same sequence, since \[\textsf{x, z, 0, y,} \ y-x, y-z, y, 0, x, x-y+z, x-y, x, 0,\] \[y-z, y, y-x, y, z, 0, x, x-y, x-z, \ \textsf{x, 0, y, z}.\] Thus, we get $(x,z,0,y) \sim (x,0,y,z).$

Also, if we compute the sequence generated by Equation (\ref{Eq:G4}) from the initial conditions $(x,0,z,y)$, we get $x, 0, z, y, y-x, y, y-z, 0, x, x-y, x-y+z, x, 0, y, y-z.$ Now then, $(x,0,z,y) \sim (x,0,y,y-z)$, and as $y\geq y-z,$ the new tuple $(x,0,y,y-z)$ has the form $(x,0,y,z')$, with $x\geq y \geq z' \geq 0$. So, if we know the behaviour of the tuple $(x,0,y,z')$ under Equation (\ref{Eq:G4}), we will know the evolution of $(x,0,z,y)$ too. Moreover, something similar happens with the initial conditions $(x,y,0,z)$, since after $11$ iterations, it becomes into $(x,y-z,0,z)$; if $y-z \leq z$, then $(x,y,0,z) \sim (x,z',0,y')$; on the contrary, repeating the process we arrive to $(x,y-2z,0,z)$ after 11 iterations. It is easy to check by induction that if we have the tuple $(x,y-mz,0,z)$ with $m\geq 0$ and $y-mz > z$, after 11 iterations we arrive to $(x, y - (m+1)z,0,z)$; thus at some point the second term will be less than the fourth and we will get the previous equivalence. To sum up, we have the equivalence $(x,y,0,z) \sim (x,z',0,y')$ for some $x \geq y'\geq z'$ and if we know the behaviour of $(x,z',0,y')$ under Equation (\ref{Eq:G4}), we will know the evolution of $(x,y,0,z)$.

In conclusion, in view of (\ref{Cases_equiv}) and the previous considerations, it is only necessary to analyse $(x,y,0,z)$ and $(x,z,y,0)$, since from their behaviour we can deduce how all the controversial cases will evolve.

\subsection{Some specific initial conditions}\label{Sec_specific}  Now we are able to analyse in detail what occurs when we compute the initial conditions $(x,y,0,z)$ and $(x,z,y,0)$ under Equation (\ref{Eq:G4}).

\subsubsection{\textbf{Case} $\textsf{(x,y,0,z)}$} We begin by studying the evolution of $(x,y,0,z)$ under Equation~(\ref{Eq:G4}), with $x \geq y \geq z \geq 0$. Notice that if $z=0$, the initial conditions generate an 11-cycle due to monotonicity, see Proposition \ref{Prop_monotonic}. On the other hand, if $x=z\geq 0$, then the initial conditions can generate an 8-cycle or the equilibrium solution. Thus, we will assume $x \geq y \geq z > 0$ and $x > z$ to avoid trivial cases (periods $1, 8$ or $11$). 

\begin{lemma} \label{Lemma_xy0z}
Let us consider the tuple $(x,tx+y-sz,0,z)$, with $x\geq tx+y-sz \geq 0$, $t\geq 0$ and $s\geq 0$. Then,
\begin{itemize}
    \item[(a)] If $tx + y - sz > z$, then after 11 iterations the tuple becomes into $(x, tx + y - (s+1)z, 0, z)$.
    \item[(b)] If $tx + y - sz \leq z$, then after 32 iterations the tuple becomes into $(x, (t+1)x + y - (s+1)z, 0, z)$.
\end{itemize}
\end{lemma}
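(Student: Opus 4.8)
The plan is to treat this as a pure bookkeeping argument driven entirely by Table~\ref{Table_cases}, which records, for each case $C_i$, the exact tuple produced after the prescribed $10$ or $11$ iterations. Since that table merely collects the explicit formulas for $x_5,\dots,x_{15}$ established in Proposition~\ref{prop_diag}, it is legitimate to apply it here regardless of whether periodicity happens to close up inside a block: those formulas are straight consequences of the recurrence on a tuple obeying the relevant inequalities, and in this lemma I only track where a fixed tuple lands after a fixed number of steps. Thus the whole proof reduces to, at each stage, (i) deciding which case the current tuple occupies and (ii) reading off its image from the table.

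For part (a) I would set $w:=tx+y-sz$ and observe that the hypothesis $tx+y-sz>z$ gives $x\ge w\ge z\ge 0$, i.e.\ the tuple $\mathbf{u}=(x,w,0,z)$ satisfies $u_1\ge u_2\ge u_4\ge u_3$ and so lies in $C_1$. The $C_1$ row of Table~\ref{Table_cases} then sends $\mathbf{u}$ after $11$ iterations to $(x,\,u_2+u_3-u_4,\,u_3,\,u_4)=(x,\,w-z,\,0,\,z)=(x,\,tx+y-(s+1)z,\,0,\,z)$, exactly as claimed.

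For part (b) we instead have $0\le w\le z$, so the same tuple now satisfies $x\ge z\ge w\ge 0$, i.e.\ $u_1\ge u_4\ge u_2\ge u_3$, placing $\mathbf{u}\in C_4$. I would then chase the tuple through the three consecutive cases $C_4\to C_5\to C_2$ prescribed by Figure~\ref{Diagrama}, applying the corresponding rows of Table~\ref{Table_cases}: the $C_4$ row ($11$ iterations) sends $(x,w,0,z)$ to $(x,0,z-w,z)$; the $C_5$ row ($11$ iterations) sends $(x,0,z-w,z)$ to $(x,0,z,w)$; and the $C_2$ row ($10$ iterations) sends $(x,0,z,w)$ to $(x,x-z+w,0,z)$. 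At each step I must confirm the intermediate tuple genuinely belongs to the asserted case: $(x,0,z-w,z)$ satisfies $x\ge z\ge z-w\ge 0$, hence $C_5$; and $(x,0,z,w)$ satisfies $x\ge z\ge w$ with its $C_2$-defining inequality $x_3\ge x_2+x_4$ reading $z\ge 0+w$. Summing the blocks gives $11+11+10=32$ iterations, and since $x-z+w=(t+1)x+y-(s+1)z$, the final tuple is precisely the one stated.

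The computations themselves are routine substitutions; the only delicate points are the boundary configurations where coordinates coincide. The crucial one is the exit from $C_5$: the diagram branches to $C_2$ or $C_3$ according to the sign of $x_3-2x_2$, and here the vanishing second coordinate $x_2=0$ forces the $C_2$ branch, whose block has length $10$ rather than $11$; this is exactly what makes the total $32$ and not $33$. I would also verify the degenerate instances $w=0$ and $w=z$ directly, the sensitive one being $w=z$, where $(x,0,z,z)$ sits on the $C_2/C_3$ boundary $x_3=x_2+x_4$. Inspecting case (ii) of Proposition~\ref{prop_diag} shows that $x_{11}=x_1$ in either subcase at this boundary, so the $C_2$ row still governs and all formulas extend continuously to it. Establishing this boundary consistency is the main (though mild) obstacle; everything else is substitution into Table~\ref{Table_cases}.
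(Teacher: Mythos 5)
Your proof is correct, but it follows a genuinely different route from the paper's. The paper proves the lemma by brute force: starting from $(x,tx+y-sz,0,z)$ it writes out, directly from Equation~(\ref{Eq:G4}), all $11$ subsequent terms in case (a) and all $32$ in case (b), and reads off the final four entries. You instead factor the computation through Proposition~\ref{prop_diag}/Table~\ref{Table_cases}: part (a) is one application of the $C_1$ row, and part (b) is the chain $C_4 \to C_5 \to C_2$ with block lengths $11+11+10=32$. Two points make this reduction legitimate, and you correctly address both. First, although Proposition~\ref{prop_diag} is stated for periodic sequences, its case-by-case formulas are derived purely from the ordering inequalities on a non-negative $4$-tuple, never from periodicity, so they apply to the tuples of the lemma. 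Second, the intermediate tuples can land on case boundaries (the sensitive one being $(x,0,z,z)$ when $w=z$, which sits on $x_3=x_2+x_4$), and there one must check that the row being used still governs; your observation that the subcase formulas of Proposition~\ref{prop_diag}(ii) agree at that boundary settles this, as they must by determinism of the recurrence. Your intermediate checkpoints $(x,0,z-w,z)$ after $11$ steps and $(x,0,z,w)$ after $22$ steps match the paper's explicit term lists exactly. As for what each approach buys: the paper's computation is self-contained and needs no case bookkeeping, at the cost of an opaque $32$-term verification; yours is shorter, explains structurally why the length is $32$ rather than $33$ (the exit through $C_2$ is a $10$-block), and identifies the case-(b) block as precisely the route $R_2:\ C_4\to C_5\to C_2\to C_4$ of Subsection~\ref{Sec_diagram}, which makes the later route analysis feel less ad hoc.
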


\begin{proof}
Assume $tx + y - sz > z$ and compute $(x,tx+y-sz,0,z)$ under (\ref{Eq:G4}): $$x, tx + y - sz, 0, z, (t-1)x + y - sz, (s+1)z - tx - y, z, 0,$$ $$(s+1)z - (t-1)x - y, x, x-z, x, tx + y - (s+1)z, 0, z.$$

On the other hand, let us assume that $tx + y - sz \leq z$. Then, $$x, tx+y-sz, 0, z, z-x, -tx - y + (s+1)z, z, 0, x,$$ $$(1+t)x + y - (s+1)z, x-z, x, 0, -tx - y + (s+1)z, z, z-x, z,$$ $$ tx+ y - sz, 0, x, x-z, (1-t)x - y + sz, x, 0, z, tx + y - sz, z-x,$$ $$z, 0, -tx - y + (s+1)z, x, x-z, x, (1+t)x + y - (s+1)z, 0, z. $$
\end{proof}

\begin{remark} \label{Remark_xy0z}
Notice that the initial conditions, $(x,y,0,z)$, are of the type $(x,tx + y - sz, 0,z)$ with $t=0$ and $s=0$. Thus, its evolution under Equation (\ref{Eq:G4}) is perfectly described by Lemma \ref{Lemma_xy0z}.

Now, according to Lemma \ref{Lemma_xy0z}, we will analyse the possibility of achieving periodicity in the middle of the process, that is, if at some point of its evolution, it is possible to recover the initial conditions $(x,y,0,z)$. For that, it is necessary to check the block of iterations in cases (a) and (b) of Lemma \ref{Lemma_xy0z}.

Firstly, in the case (a), reasoning similarly to the study of controversial cases in Subsection \ref{Sec_controversial}, it is easily seen that periodicity cannot hold in the middle of the process.

However, in case (b), we could obtain periodicity in the twenty second iteration, which implies $$(1-t)x - y + sz = x; \ \ \ x = y; \ \ \ 0 = 0; \ \ \ z = z. $$

Then, the initial conditions are $(x,x,0,z)$. Moreover, $(1-t)x - x + sz = x$ must hold and we get $sz = (t+1)x$. Therefore, when analysing the period of the sequence, this case must be study apart. $\hfill\Box$
\end{remark}

Now, we will study the periodic character of the sequence generated by the initial conditions $(x,y,0,z)$.

\begin{lemma} \label{Lemma2}
Assume that the initial conditions $(x,y,0,z)$ generate a periodic sequence under Equation (\ref{Eq:G4}), with $x\geq y \geq z > 0$, $x>z$. Then $\frac{z}{x} \in \mathbb{Q}$.
\end{lemma}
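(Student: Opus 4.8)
The plan is to read off from Lemma~\ref{Lemma_xy0z} a one-dimensional dynamical system governing the second coordinate of the tuple and to recognise it as a rigid rotation of a circle. Write the generic tuple reached during the evolution as $(x, v, 0, z)$ with $v = tx + y - sz \in [0,x]$. According to the two cases of Lemma~\ref{Lemma_xy0z}, one macro-step sends $v \mapsto v - z$ when $v > z$ (after $11$ iterations, $s \mapsto s+1$) and $v \mapsto v + (x-z)$ when $v \leq z$ (after $32$ iterations, $t,s \mapsto t+1,s+1$). In both cases the image lies again in $[0,x]$, so the induced map $T$ is well defined on the whole interval, and the orbit $(x,v_k,0,z)$ of the initial datum $v_0 = y$ never leaves the family $(x,\cdot,0,z)$.

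The key observation is that $T$ is exactly the rotation $v \mapsto v - z \pmod{x}$ on the circle $\mathbb{R}/x\mathbb{Z}$: subtracting $z$ on $(z,x]$ produces no wraparound, whereas adding $x-z$ on $[0,z]$ is the same as subtracting $z$ modulo $x$. Consequently $v_k \equiv y - kz \pmod{x}$ for every $k \geq 0$, and since all the $v_k$ lie in $[0,x)$ (up to the harmless endpoint $x$), two macro-states coincide if and only if the corresponding indices satisfy $kz \equiv 0 \pmod{x}$.

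To finish, I would invoke periodicity. If $(x_n)$ is periodic then the tuple $(x,y,0,z)$ must recur, so the $T$-orbit of $y$ is finite; equivalently $v_k = y$, hence $v_k \equiv y \pmod{x}$, for some $k \geq 1$. This forces $kz = mx$ for positive integers $k,m$, whence $\frac{z}{x} = \frac{m}{k} \in \mathbb{Q}$. Phrased contrapositively, if $\frac{z}{x}$ were irrational the rotation by $z$ on $\mathbb{R}/x\mathbb{Z}$ would have only infinite orbits (the classical fact that irrational rotations are aperiodic), so $(x,y,0,z)$ could never recur and the sequence could not be periodic.

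The step that needs the most care is the passage from periodicity of the full sequence $(x_n)$ to recurrence of the reduced state $(x,\cdot,0,z)$: one must check that periodicity cannot be produced \emph{inside} one of the blocks of $11$ or $32$ iterations in a way that escapes the rotation picture. This is precisely the situation isolated in Remark~\ref{Remark_xy0z}, where mid-block periodicity forces initial conditions $(x,x,0,z)$ together with $sz = (t+1)x$; but that relation already yields $\frac{z}{x} = \frac{t+1}{s} \in \mathbb{Q}$ directly, so it is consistent with, and does not weaken, the conclusion. Once this case is absorbed, the only mechanism for periodicity is the recurrence of $(x,y,0,z)$, and the rotation argument applies uniformly whether the individual blocks have length $11$ or $32$, since both contribute the same increment $-z$ modulo $x$.
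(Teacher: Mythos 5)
Your proof is correct and follows essentially the same route as the paper: both arguments rest on Lemma~\ref{Lemma_xy0z}, tracking the second coordinate of the tuple until periodicity forces $tx + y - sz = y$ for some $t,s\geq 1$, i.e. $tx = sz$, whence $\frac{z}{x} = \frac{t}{s} \in \mathbb{Q}$. Your rotation-by-$z$ modulo $x$ picture, and the explicit appeal to Remark~\ref{Remark_xy0z} to rule out (or absorb) mid-block recurrence, are a more careful packaging of the same argument, whose mid-block case the paper's one-line proof leaves implicit.
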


\begin{proof}
Firstly, we divide each component by $x$, so we get $(1, \frac{y}{x}, 0, \frac{z}{x})$. If we apply Lemma \ref{Lemma_xy0z} to $(x,y,0,z)$, then for some $t, s \geq 1$ we will have periodicity if $tx+y-sz = y$ (in fact, there are an infinity of values $t,s$ holding the equality), which implies that $\frac{z}{x} = \frac{t}{s} \in \mathbb{Q}$.
\end{proof}

The following result shows us that the restriction $\frac{z}{x} \in \mathbb{Q}$ is not only necessary, but also sufficient. According to Lemma \ref{Lemma2} and Proposition \ref{Prop_scalar}, we can simplify the tuple $(1, \frac{y}{x}, 0, \frac{z}{x})$ as $(1, \frac{y}{x}, 0, \frac{q}{p})$, where $\gcd(p,q) = 1$. Finally, multiplying by $p$ we get $(p, \bar{y}, 0, q)$, where $\bar{y} = p \cdot \frac{y}{x}$. Notice that $\bar{y}$ does not need to be rational. 

\begin{proposition} \label{N_controversial1}
Let $p,q\in\mathbb{Z}$, $\bar{y} \in \mathbb{R}$. Assume $p\geq \bar{y} \geq q>0$ and $\gcd(p,q) = 1$. The initial conditions $(p, \bar{y}, 0, q)$ generate under Equation (\ref{Eq:G4}) a periodic sequence. Also, the period is given by $N=(p+q)\cdot 11 + q\cdot 10$.
\end{proposition}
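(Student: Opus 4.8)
The plan is to use Lemma~\ref{Lemma_xy0z} as the fundamental engine and to track how the pair of integer parameters $(t,s)$ evolves until the tuple returns to its initial value, summing the iteration counts along the way. Since we start from $(p,\bar y,0,q)$, which is of the type $(p, tp + \bar y - sq, 0, q)$ with $t=0$, $s=0$, each application of Lemma~\ref{Lemma_xy0z} either increases $s$ by $1$ at a cost of $11$ iterations (case (a), when the second coordinate stays above $q$), or increases both $t$ and $s$ by $1$ at a cost of $32$ iterations (case (b), when the second coordinate drops to $q$ or below). The sequence returns to its initial configuration exactly when the second coordinate $tp + \bar y - sq$ equals $\bar y$ again, i.e.\ when $tp = sq$; because $\gcd(p,q)=1$, the first return occurs at $t=q$, $s=p$.

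First I would make the dynamics of $(t,s)$ completely explicit. Starting at $(0,0)$, the coordinate $tp+\bar y - sq$ decreases by $q$ each time we are in case (a) and is reset upward by $p$ (net change $p-q>0$) precisely when a case (b) step fires. The number of case (b) steps equals the number of times $t$ increments, which is exactly $q$ (since we need $t$ to climb from $0$ to $q$); the number of case (a) steps is the number of $s$-only increments, which is the total number of $s$-increments, $p$, minus the $q$ steps where $s$ also increments in case (b), giving $p-q$ case-(a) steps. Therefore the total number of iterations before first return is
$$
N = (p-q)\cdot 11 + q\cdot 32 = 11p - 11q + 32q = 11p + 21q = (p+q)\cdot 11 + q\cdot 10,
$$
which is the claimed value. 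The second task is to confirm that $N$ is genuinely the \emph{period} and not merely a return time: one must check that no smaller return occurs, which follows from the minimality of $(t,s)=(q,p)$ as the first solution of $tp=sq$ with $t,s\geq 1$ under $\gcd(p,q)=1$, together with the verification (along the lines of Remark~\ref{Remark_xy0z}) that periodicity cannot be achieved strictly in the middle of any individual block of $11$ or $32$ iterations for the nontrivial regime $x>z>0$.

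The main obstacle I expect is the bookkeeping of the case-(b) count, and in particular justifying that every case-(b) block truly completes a descent-and-reset without an intermediate recurrence of the initial tuple. One has to argue that within a $32$-iteration block the second coordinate genuinely passes from a value $\le q$ back up to $(t+1)p+\bar y-(s+1)q$ without hitting $\bar y$ prematurely; this is where the special sub-case flagged in Remark~\ref{Remark_xy0z}, namely $(x,x,0,z)$ with $sz=(t+1)x$, must be excluded or separately accounted for. Assuming $p\geq \bar y\geq q>0$ with strict inequality $p>q$ keeps us away from that degenerate configuration, so the block structure is clean and the count $N=(p+q)\cdot 11 + q\cdot 10$ stands as the exact period.
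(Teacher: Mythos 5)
Your counting engine is the same as the paper's and, in the generic situation, it is correct: the paper likewise iterates Lemma~\ref{Lemma_xy0z}, counts $\alpha$ blocks of $32$ iterations and $\beta$ blocks of $11$, derives $\alpha p=(\alpha+\beta)q$, and concludes $\alpha=q$, $\beta=p-q$, so $N=11(p-q)+32q=11(p+q)+10q$. Two caveats, one minor and one serious. The minor one: coprimality alone does not show that the \emph{first} return is at $(t,s)=(q,p)$ rather than, say, $(2q,2p)$; you also need the paper's finiteness argument (the second coordinate takes at most $p+1$ values, of the form $\{\bar y\}+j$ with $j\in\{0,1,\dots,p\}$, before it must repeat, and a repeat forces a return since eventual periodicity implies periodicity), which caps the number of blocks by $p+1$ and hence forces the first solution $k=1$ of $(t,s)=(kq,kp)$. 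You should also record that Lemma~\ref{Lemma_xy0z} stays applicable at every step, i.e.\ that the second coordinate remains in $[0,p]$ throughout.

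The serious gap is in your last paragraph. The degenerate configuration flagged in Remark~\ref{Remark_xy0z} is $(x,x,0,z)$ together with $sz=(t+1)x$; it is characterized by $\bar y=p$, not by $p=q$. The hypothesis $p\ge\bar y\ge q>0$ with $p>q$ does \emph{not} keep you away from it: the tuple $(5,5,0,2)$ satisfies every hypothesis of the proposition and is exactly of that form. So your dismissal of mid-block recurrence rests on a false claim, and the case $\bar y=p$ must be handled. The paper devotes the entire second half of its proof to it: assuming the recurrence occurs inside a $32$-block, it deduces $sq=(t+1)p$, rules out $s>p$ by a counting argument to get $s=p$ and $t+1=q$, and verifies that the resulting count $32(q-1)+11(p-q+1)+21$ again equals $11(p+q)+10q$. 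Alternatively, you could close the gap by proving the mid-block recurrence never fires at all: it would require the second coordinate $v=tp+\bar y-sq$ to vanish at the start of a $32$-block, whereas the block dynamics ($v\mapsto v-q$ when $v>q$, and $v\mapsto v+p-q$ when $0<v\le q$, starting from $v=\bar y>0$ with $p>q$) keep $v$ strictly positive forever. Either way, a genuine argument is needed at the point where you currently have an incorrect one.
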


\begin{proof}
By Lemma \ref{Lemma_xy0z} (with $x=p$, $z = q$ and $t=0=s$), we know that the second term of the initial conditions is the only one that varies after 11 or 32 iterations. In fact, if we denote by $(z_1,z_2,z_3,z_4)$ the tuple after each block of 11 or 32 iterations (observe that $z_1=p$, $z_2 \geq 0$, $z_3=0$ and $z_4 = q$), then we claim that $z_2 = \{\bar{y}\} + j$ for some $j \in \{0,1,\ldots, p-1, p\}$, where $\{\cdot\}$ denotes the fractional part of a number. Indeed, in the first time, $z_2=\bar{y}=\{\bar{y}\} + [\bar{y}]$, thus the property is true for $j=[\bar{y}]\in \{0,1,\ldots, p\}$ (notice that $j=p$ is possible when $\{\bar{y}\} = 0$ and $p=\bar{y}$). Next, since $\bar{y} \geq q$, if $\bar{y} > q$, by Lemma \ref{Lemma_xy0z}, $\bar{y}$ goes to $z_2 = \bar{y} - q < p$. On the other hand, if $\bar{y} = q$, then $z_2 = p + \bar{y} - q = p$ after 32 iterations. In both cases, $z_2 = \{\bar{y}\} + j$ for appropriate $0\leq j \leq p$. By induction it is immediate to check that after each block of the 11 or 32 iterations stipulated in Lemma \ref{Lemma_xy0z}, we find that the corresponding $z_2$ has the form searched, which ends the claim.  

Therefore, as $z_2$ can only take a finite number of values, and we know by Lemma \ref{Lemma_xy0z} that eventual periodicity implies periodicity, we can assure that the value $z_2 = \bar{y}$ will be repeated and the sequence will be periodic. Now, we will determine the period of the sequence.

According to Remark \ref{Remark_xy0z} we need to distinguish the cases $p>\bar{y}$ and $p = \bar{y}$. In the first case periodicity holds after a finite amount of complete blocks of $11$ or $32$ iterations. On the other hand, when $p = \bar{y}$, periodicity holds in the middle of a block of $32$ iterations.

Firstly, we assume that $p>\bar{y}$. Realize that after applying $r$ times a block of 32 iterations and $m$ times a block of 11 iterations, by Lemma \ref{Lemma_xy0z}, the second term will be $z_2= \bar{y} + r\cdot p - (r+m) \cdot q$.  

Let us denote by $\alpha$ and $\beta$ the number of blocks of 32 and 11 iterations, respectively, needed to achieve periodicity. Thus, $\alpha\cdot p + \bar{y} - (\alpha + \beta) \cdot q = \bar{y}$, which implies $\alpha \cdot p = (\alpha + \beta) \cdot q$. Observe that gcd$(p,q)=1$ yields to $\alpha = \dot{q}$ and $\alpha + \beta = \dot{p}$. Nevertheless, as the integer part of the second term, $[z_2]$, varies in the set $\{0,1,\ldots,p-1,p\}$, then $\alpha + \beta$ (which represents the total amount of blocks of 11 and 32 iterations needed to achieve periodicity) must be less or equal to $p+1$. So, if $\alpha+\beta =\dot{p}$ and $\alpha + \beta \leq p+1$, it holds $\alpha + \beta = p$ (observe that $p \geq 2$). Then, $\alpha \cdot p = p \cdot q$ and it follows $\alpha = q$, since gcd$(p,q)=1$. 

Definitely, the sequence will be determined by $(p-q)$ blocks of $11$ iterations and $q$ blocks of $32$. Therefore, the period of the sequence is $N = (p-q)\cdot 11 + q \cdot 32$, which is equivalent to $$N = (p+q)\cdot 11 + q \cdot 10.$$

Secondly, assume that $p = \bar{y}$. By Remark \ref{Remark_xy0z}, this means that after applying $r=t$ times a block of 32 iterations and $m = s-t$ times a block of 11 iterations, periodicity will hold in the middle of the following block of 32 iterations (thus, we assume that we have not recover the initial conditions before those $s$ blocks); moreover, $sq = (t+1)p$, which implies $\frac{q}{p} = \frac{t+1}{s}$. As gcd$(q,p)=1$, $\frac{q}{p}$ is irreducible and we deduce that $t+1\geq q$, $s\geq p$.

If $s > p$, then $t+1 > q$ must hold.
This means that there are $s$ tuples, with $s = s'p, s'>1, s\geq 2p$, that are of the form $(p, \widetilde{t}p + \bar{y} - \widetilde{s}q, 0, q)$, but we know that every term $\widetilde{t}p + \bar{y} - \widetilde{s}q$ is of the type $\bar{y}+j$, with $j\in\{0,1,\ldots,p\}$.
 
As $s\geq 2p$ and $2p > p+1$ if $p\geq 2$ ($p=1$ yields to $(1,1,0,1)$ that is an $8$-cycle; also $p>q$), there exist $\widetilde{s}, \widetilde{t}$, with $\widetilde{s}\in[1,p], \widetilde{t}\leq \widetilde{s}$, such that $\bar{y} + \widetilde{t}p - \widetilde{s}q = \bar{y}$. Since $\widetilde{s}<s$ we obtain a contradiction, as we have supposed that periodicity has not been completed before applying $s$ blocks of iterations.

So, $s=p, \ t+1=q$, and consequently the period is
\begin{eqnarray*}
N &=& 32\cdot (q-1) + 11\cdot (p-q+1) + 21 = (2\cdot 11 + 10) \cdot q + 11\cdot (p-q) \\
&=& (p+q)\cdot 11 + q\cdot 10.
\end{eqnarray*}
\end{proof}

\begin{remark} \label{Remark_controv1}
Notice that the coefficients of the decomposition, that is, $a=q$ and $b= p + q$ verify the conditions $b \geq 2a + 1$ and $\mathrm{gcd}(a,b) = 1$. Moreover, every number of the form $\widetilde{N} = 10\cdot a + 11 \cdot b$ with $b \geq 2a + 1$ and $\mathrm{gcd}(a,b) = 1$ can be written as $\widetilde{N} = 10\cdot q + 11 \cdot (p+q)$ by taking $q=a$ and $p=b-a$, with $\gcd(p,q) = 1$ and $p\geq 2q+1$ as can be easily checked.
\end{remark}

\begin{corollary} \label{Cor_xy0z}
Given the initial conditions $(x,y,0,z)$ with $x \geq y \geq z > 0$ and $x>z$, they generate a periodic sequence under Equation (\ref{Eq:G4}) if and only if $\frac{z}{x}\in \mathbb{Q}$. In that case, the period is $N= 10\cdot a + 11\cdot b$, for some $a,b \in \mathbb{N}$ where $\gcd(a,b) = 1$, $b\geq 2a + 1$. 
\end{corollary}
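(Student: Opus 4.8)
The plan is to read this corollary as the packaging of the two preceding results, Lemma~\ref{Lemma2} and Proposition~\ref{N_controversial1}, with the scale-invariance of Proposition~\ref{Prop_scalar} serving as the bridge between them. The necessity of $z/x\in\mathbb Q$ is nothing but the statement of Lemma~\ref{Lemma2}, so no further work is needed in that direction; all the effort goes into the converse implication and into rewriting the period in the advertised form $10a+11b$.

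For sufficiency, I assume $z/x\in\mathbb Q$ and first normalize. By Proposition~\ref{Prop_scalar} I may multiply all initial data by any positive constant without changing the period, so I divide by $x$ to reach $(1,\,y/x,\,0,\,z/x)$ and then write $z/x=q/p$ in lowest terms, $\gcd(p,q)=1$ with $p,q\in\mathbb N$. Scaling once more by $p$ yields $(p,\bar y,0,q)$ with $\bar y:=p\,y/x$. The key checkpoint is that this tuple satisfies the hypotheses of Proposition~\ref{N_controversial1}: indeed $\gcd(p,q)=1$ by construction; $q>0$ because $z>0$ and $p>0$; and from $z\le y\le x$ I obtain $q/p=z/x\le y/x\le 1$, hence $q\le \bar y\le p$. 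Proposition~\ref{N_controversial1} then gives at once that the sequence is periodic with period $N=(p+q)\cdot 11+q\cdot 10$.

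It remains to recast $N$ in the claimed shape, which is exactly the content of Remark~\ref{Remark_controv1}: taking $a=q$ and $b=p+q$ gives $N=10a+11b$ with $\gcd(a,b)=\gcd(q,p+q)=\gcd(q,p)=1$. The one inequality that must be verified by hand is $b\ge 2a+1$, i.e.\ $p+q\ge 2q+1$, equivalently $p\ge q+1$; this is where the standing assumption $x>z$ enters, since $x>z$ forces $q/p=z/x<1$, so $p>q$, and, both being integers, $p\ge q+1$. There is no genuine obstacle here — the statement is truly a corollary — and the only points demanding care are the two boundary checks just isolated: that the rescaled middle term $\bar y$ lands in the admissible window $[q,p]$ so that Proposition~\ref{N_controversial1} may be invoked, and that the \emph{strict} inequality $x>z$ (as opposed to $x\ge z$) is precisely what upgrades $p\ge q$ to $p\ge q+1$ and thereby guarantees $b\ge 2a+1$.
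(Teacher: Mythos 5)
Your proposal is correct and follows essentially the same route the paper intends: necessity is exactly Lemma~\ref{Lemma2}, sufficiency comes from normalizing via Proposition~\ref{Prop_scalar} to the tuple $(p,\bar y,0,q)$ and invoking Proposition~\ref{N_controversial1}, and the form $N=10a+11b$ with $\gcd(a,b)=1$, $b\geq 2a+1$ is the content of Remark~\ref{Remark_controv1}. Your two explicit boundary checks (that $q\leq\bar y\leq p$, and that $x>z$ yields $p\geq q+1$, hence $b\geq 2a+1$) are precisely the details the paper leaves implicit, and both are verified correctly.
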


\subsubsection{\textbf{Case} $\textsf{(x,z,y,0)}$} \label{Sec_xzy0}  

As in the previous case, we assume $x\geq y \geq z \geq 0$. If $y=z$, the initial conditions provide an 11-cycle. If $z=0$, $(x,0,y,0)$ is equivalent to $(x,x-y,0,y)$ if $x-y\geq y$ (it is only necessary to compute a block of ten iterations to see it); and it is equivalent to $(x,0,-x+2y,y)$ if $x-y<y$ (it follows by computing a block of 21 iterations). So, in the first case, we arrive to a set of initial conditions having the structure of $(x,y',0,z')$; and in the second case, we get to $(x,0,z',y')$, with $x\geq y' \geq z' \geq 0$. Both initial conditions have been already analysed in the previous subsection. Therefore, from now on, we will assume that $$y>z>0.$$

On the other hand, observe that after ten iterations the tuple becomes into $(x,x-(y-z),z,y)$. Indeed: \[x, z, y, 0, y-x, y-z, -z, y-z, x-z, x-y, x, x - (y-z), z, y.\]

\begin{lemma} \label{prop_xzy1}
Let us consider the tuple $(x, tx - s(y-z), z, y)$ with $x \geq y > z > 0$, $x \geq tx - s(y-z)\geq 0$ and $t,s$ nonnegative integers. Then,
\begin{itemize}
    \item[(a)] If $tx - s(y-z) \geq y$, then after 11 iterations the tuple becomes into $(x, tx-(s+1)(y-z), z, y)$.
    \item[(b)]  If $tx - s(y-z) < y$ and $tx - (s-1)(y-z) \geq y$, then after 22 iterations the tuple becomes into $(x, z, y, tx-s(y-z))$.
\end{itemize}
\end{lemma}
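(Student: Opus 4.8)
The plan is to recognize each of the two tuples appearing in the hypotheses as belonging to one of the five Cases $C_i$ collected in Table~\ref{Table_cases}, and then to read off the eleven-iteration image directly from that table, these images being exactly the transitions established in Proposition~\ref{prop_diag}. Throughout I abbreviate $w:=tx-s(y-z)$, so the tuple under study is $(x,w,z,y)$ with $x\geq w\geq 0$ and $x\geq y>z>0$; in particular $x=\max\{x,w,z,y\}$, which matches the normalization $x_1=\max_j x_j$ underlying Proposition~\ref{prop_diag} and Table~\ref{Table_cases}.

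For part (a), the hypothesis $tx-s(y-z)\geq y$ reads $w\geq y$, giving the ordering $x\geq w\geq y>z$. Hence $(x,w,z,y)$ satisfies Case $C_1$ ($x_1\geq x_2\geq x_4\geq x_3$). The $C_1$ row of Table~\ref{Table_cases} then says that after eleven iterations the tuple becomes $(x,\,w+z-y,\,z,\,y)$, and substituting $w=tx-s(y-z)$ yields $w+z-y=tx-s(y-z)-(y-z)=tx-(s+1)(y-z)$, which is the claimed image. One checks $w+z-y\geq 0$ immediately from $w\geq y>z$, so the reported tuple is genuinely of the same shape.

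For part (b), the two inequalities translate into $z\leq w<y$: indeed $tx-(s-1)(y-z)\geq y$ is $w+(y-z)\geq y$, i.e.\ $w\geq z$, while the first inequality is $w<y$. Thus $(x,w,z,y)$ now satisfies Case $C_4$ ($x\geq y\geq w\geq z$), and the $C_4$ row of Table~\ref{Table_cases} sends it, after eleven iterations, to $(x,\,z,\,y+z-w,\,y)$. The key observation is that this intermediate tuple lies in Case $C_5$: since $z\leq w<y$ we have $y\geq y+z-w>z$, hence $x\geq y\geq y+z-w>z$, which is precisely the ordering $x_1\geq x_4\geq x_3\geq x_2$. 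Applying the $C_5$ row of Table~\ref{Table_cases} to $(x,z,y+z-w,y)$ produces, after eleven further iterations, $(x,\,z,\,y,\,z+y-(y+z-w))=(x,z,y,w)$, that is $(x,z,y,tx-s(y-z))$, after a total of twenty-two iterations. This $C_4\to C_5$ passage is exactly the arrow recorded in Figure~\ref{Diagrama}.

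The only genuine care needed, and the step I would treat most carefully, concerns the tie configurations (for instance $w=z$, where the tuple simultaneously matches more than one case description) together with the requirement that every intermediate term stay non-negative, since the formulas in Table~\ref{Table_cases} were derived in Proposition~\ref{prop_diag} under exactly those sign hypotheses. To remove any ambiguity I would, as a rigorous alternative, simply write out the full orbit term by term in the style of the proof of Lemma~\ref{Lemma_xy0z}: eleven explicit terms for (a) and twenty-two for (b), resolving each $\max$ by means of $z\leq w<y\leq x$ in case (b) (respectively $y\leq w$ in case (a)). This simultaneously confirms the sign conditions and exhibits the final tuple without any appeal to case membership.
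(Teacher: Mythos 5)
Your proof is correct, and it takes a genuinely different route from the paper's. The paper proves this lemma by brute force: it writes out the full orbit, eleven explicit terms for (a) and twenty-two for (b), resolving every maximum by hand. You instead recognize $(x,w,z,y)$, $w:=tx-s(y-z)$, as lying in $C_1$ (case (a)) or $C_4$ (case (b)), read the eleven-step image off Table~\ref{Table_cases}, and in case (b) chain a second application through $C_5$; the algebra checks out ($w+z-y=tx-(s+1)(y-z)$ in (a), and $y+z-(y+z-w)=w$ in (b), matching the paper's intermediate term $-tx+(s+1)y-(s-1)z$). Your worry about tie configurations is actually vacuous: since the recurrence is deterministic, whenever a tuple satisfies the hypotheses of two cases simultaneously (e.g.\ $w=z$, which fits both $C_4$ and $C_5$), the two table formulas must and do agree, so no ambiguity can arise. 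Two remarks on what each approach buys. Your reduction is shorter, reuses established machinery, and makes the structural content visible — part (b) is literally the $C_4\to C_5$ arrow of Figure~\ref{Diagrama}, which the paper's raw computation obscures. One caution: Proposition~\ref{prop_diag} is \emph{stated} for periodic sequences, whereas this lemma is applied precisely where periodicity is not yet known (it is an ingredient in proving it, cf.\ Proposition~\ref{N_controversial2}); so strictly you must cite the computations in the proof of Proposition~\ref{prop_diag} (equivalently, Table~\ref{Table_cases} read as algebraic identities valid for any non-negative tuple with the stated ordering), not the proposition itself — otherwise the argument would look circular. Conversely, the paper's explicit listing of all twenty-two terms is not wasted effort: Remark~\ref{Remark_xzy04} later inspects those intermediate terms one by one to rule out (or locate) periodicity in the middle of a block, and your table-based argument does not supply that data; your proposed fallback of writing the orbit out term by term would, but then you have reproduced the paper's proof.
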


\begin{proof}
First, assume $tx - s(y-z) \geq y$. Then by Equation (\ref{Eq:G4}):
$$x, tx - s(y-z), z, y, (t-1)x - s(y-z), (s+1)y - tx - sz, y-z, -z,$$ $$ (1-t)x + (s+1)(y-z), x-z, x-y, x, tx-(s+1)(y-z), z, y. $$
On the other hand, if $tx - s(y-z) < y$ and $tx - (s-1)(y-z) \geq y$, then: $$x, tx - s(y-z), z, y, y-x, (s+1)y - tx - sz, y-z, -z, x-z,$$ $$ (t+1)x + (s-1)z - (s+1)y, x-y, x, z, -tx+(s+1)y - (s-1)z, y, y-x,$$ $$y-z, tx-sy+(s-1)z, -z, x-z, x-y, (1-t)x+sy-sz, x,z,y,tx-s(y-z). $$
\end{proof}

\begin{lemma} \label{prop_xzy2}
Let us consider the tuple $(x,z,y,tx-s(y-z))$ with $x \geq y > z > 0$, $y> tx-s(y-z) \geq 0$ and $t,s$ nonnegative integers. Then, 
\begin{itemize}
    \item[(a)] If $tx-(s+1)(y-z) \geq 0$, then after 11 iterations the tuple becomes into $(x,z,y,tx-(s+1)(y-z))$.
    \item[(b)] If $tx-(s+1)(y-z) < 0$, then after 10 iterations the tuple becomes into $(x,(t+1)x-(s+1)(y-z),z,y)$.
\end{itemize}
\end{lemma}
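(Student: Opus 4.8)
The plan is to prove both parts by straight iteration of Equation~(\ref{Eq:G4}), in exactly the computational style of Lemma~\ref{Lemma_xy0z} and Lemma~\ref{prop_xzy1}: one displays the orbit of the given tuple and reads off the four consecutive entries sitting at the end of the stated block of iterations. Throughout I would abbreviate $w := tx - s(y-z)$ and $d := y - z$, so that the standing hypotheses become $x \geq y > z > 0$, $0 \leq w < y$ and $d > 0$, while the dichotomy between (a) and (b) is precisely the comparison $w \geq d$ versus $w < d$, since $tx - (s+1)(y-z) = w - d$.

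First I would run the iteration through the part common to both cases. Starting from $(x, z, y, w)$ one gets $x_5 = \max\{w, y, z, 0\} - x = y - x \leq 0$, using $y > w$, $y > z$ and $x \geq y$; then $x_6 = \max\{y-x, w, y, 0\} - z = y - z = d$. These two evaluations need only the standing hypotheses. The first place where the case split intervenes is $x_7 = \max\{d, y-x, w, 0\} - y$: in case (a) the maximum is $w$, giving $x_7 = w - y$, whereas in case (b) it is $d$, giving $x_7 = -z$. From this branch point I would carry the two computations independently.

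In case (a) the subsequent terms should come out as $x_8 = d - w \leq 0$, $x_9 = x - z$, $x_{10} = x - y$, $x_{11} = x + d - w$, and finally $x_{12} = x$, $x_{13} = z$, $x_{14} = y$, $x_{15} = w - d = tx-(s+1)(y-z)$, so that the tuple after $11$ iterations is $(x, z, y, tx-(s+1)(y-z))$, as asserted. In case (b) I expect $x_8 = d - w > 0$, $x_9 = x - z$, $x_{10} = x - y$, $x_{11} = x$, $x_{12} = (t+1)x - (s+1)(y-z)$, $x_{13} = z$, $x_{14} = y$, giving the tuple $(x, (t+1)x-(s+1)(y-z), z, y)$ after $10$ iterations. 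I would present each case as a single displayed line of fifteen (respectively fourteen) consecutive terms, as was done in the preceding lemmas.

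The argument carries no conceptual content; the whole task is bookkeeping, and the hard part will simply be verifying, at each step, that the entry I single out as the maximum really dominates, since a single wrong comparison corrupts the remainder of the orbit. The genuinely decisive comparison is the one at $x_7$, where $d$ and $w$ compete and the hypothesis $w \geq d$ or $w < d$ (equivalently the sign of $tx - (s+1)(y-z)$) selects the branch. Every other maximum is then forced by the standing inequalities together with the signs already computed; for instance the recurrent comparison $x - z \geq d - w$ reduces to $x - y + w \geq 0$, which holds because $x \geq y$ and $w \geq 0$. I would double-check these comparisons at the boundary value $w = d$, where both branches must agree, as a consistency test on the computation.
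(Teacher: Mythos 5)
Your proposal is correct and follows essentially the same route as the paper: a direct term-by-term iteration of Equation~(\ref{Eq:G4}), with the case split entering exactly at the seventh term, and every entry you list (in your $w,d$ notation) agrees with the paper's displayed orbits in both cases. The justifications of the intermediate maxima that you add (e.g.\ $x-z \geq d-w$ via $x-y+w\geq 0$) are left implicit in the paper but are the right ones.
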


\begin{proof}
Assume $tx-(s+1)(y-z) \geq 0$. Therefore, by Equation (\ref{Eq:G4}): $$x, z, y, tx - s(y-z), y-x, y-z, tx - (s+1)y+sz, -tx + (s+1)(y-z), x-z,$$ $$x-y, (1-t)x + (s+1)y - (s+1)z, x, z, y, tx - (s+1)(y-z). $$
On the contrary, if $tx-(s+1)(y-z) < 0$, then the tuple evolves as follows: $$x, z, y, tx-s(y-z), y-x, y-z, -z, -tx + (s+1)(y-z), x-z, x-y,$$ $$x, (t+1)x-(s+1)(y-z),z,y. $$
\end{proof}

\begin{remark} \label{Remark3}
Observe that if four consecutive terms verify the conditions of Lemma \ref{prop_xzy1}(b), after 22 iterations, the new terms will satisfy the conditions of Lemma \ref{prop_xzy2}. Furthermore, if four consecutive terms verify the conditions of Lemma \ref{prop_xzy2}(b), then, after 10 iterations, the new tuple will satisfy the conditions of Lemma \ref{prop_xzy1}. Therefore, Lemma \ref{prop_xzy1} and Lemma \ref{prop_xzy2} jointly describe how the tuple $(x, x - (y-z),z,y)$, which appears after iterating 10 times the initial conditions $(x,z,y,0)$, evolves under Equation (\ref{Eq:G4}). $\hfill\Box$
\end{remark}

\begin{remark} \label{Remark_xzy04}
Now according to Lemma \ref{prop_xzy1} and Lemma \ref{prop_xzy2} we will analyse the possibility of achieving periodicity in the middle of the process, that is, if at some point of its evolution, it is possible to recover the initial conditions $(x,x-(y-z),z,y)$ (observe that if the initial conditions $(x,x-(y-z),z,y)$ generate a periodic sequence, then, by Proposition \ref{Prop_eventually}, the same applies for the initial conditions $(x,z,y,0)$ and the period of the sequence is obviously the same). For that, it is necessary to check the blocks of iterations in cases (a) and (b) of Lemma \ref{prop_xzy1} and Lemma \ref{prop_xzy2}.

Firstly, in Lemma \ref{prop_xzy1}(b), we could obtain periodicity in the eleventh iteration, which implies
$$ x = x; \ \ \ \ z = x - (y-z); \ \ \ \ -tx+(s+1)x-(s-1)z = z; \ \ \ \ y = y. $$

This yields to $x=y$, so the initial conditions are $(x,z,z,x)$. Now, if we iterate this tuple under Equation (\ref{Eq:G4}), we get $x, z, z, x, 0, x-z, x-z$, so $(x,z,z,x) \sim (x,0,x-z,x-z)$. Observe that the tuple $(x,0,x-z,x-z)$ behaves as the initial conditions $(x,0,y',y')$ with $x>y'>0$, but this tuple is a particular case of the controversial one $(x,0,\widetilde{y},\widetilde{z})$ with $\widetilde{y}=y'=\widetilde{z}$. Moreover, in Subsection \ref{Sec_controversial}, we have seen that, due to the equivalence relations, this case is covered by the analysis of the initial conditions $(x,\hat{y},0,\hat{z})$ with $x\geq \hat{y} \geq \hat{z}\geq 0$ and $x > \hat{z}$.

On the other hand, it is easy to check, proceeding as in the study of the controversial cases (see Subsection \ref{Sec_controversial}), that periodicity cannot hold in the middle of the process of the block of 11 iterations of Lemma \ref{prop_xzy1}(a), nor in the middle of blocks of 11 and 10 iterations of Lemma \ref{prop_xzy2}. $\hfill\Box$
\end{remark}

By Lemma \ref{prop_xzy1}, Lemma \ref{prop_xzy2} and Remark \ref{Remark3}, we know how a tuple of the form $(x,z, y,0)$ with $x\geq y > z > 0$ evolves under Equation (\ref{Eq:G4}). Now, we will determine the period of the sequence. For that, we will distinguish two cases depending on the fact whether the initial terms are rational numbers or not.

\textbf{Case A:} If the initial conditions are rational, we can multiply them by the least common multiple of their denominators and the terms will be integers. Thus, we can assume without loss of generality, by Proposition \ref{Prop_scalar}, that $x, y, z \in \mathbb{N}$. 

Let us call $d = \gcd(y-z,x)$. After the first ten iterations, we have seen that the original tuple becomes into $(x, x -(y-z), z, y)$. If we divide each term by $d$, we get $\left( \frac{x}{d}, \frac{x-(y-z)}{d}, \frac{z}{d}, \frac{y}{d} \right)$, or equivalently $(p,q, z', y')$, where $y' = \frac{y}{d}$, $z' = \frac{z}{d}$ and $\gcd(p,q) = 1$. Notice that $p$ is the greatest term of the tuple and $p$ and $q$ are integer numbers.

\begin{proposition} \label{N_controversial2}
Under the previous considerations, the initial conditions $(p,q, z', y')$, with $\gcd(p,q) = 1$ and $q=p-(y'-z')$, generate under Equation (\ref{Eq:G4}) a periodic sequence. Furthermore, the period is given by $N=(2p-q)\cdot 11 + (p-q)\cdot 10$.
\end{proposition}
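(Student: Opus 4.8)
The plan is to imitate the proof of Proposition~\ref{N_controversial1}, but tracking the single entry that actually moves. Write $w:=y'-z'=p-q$, so that $\gcd(p,w)=\gcd(p,q)=1$. By Lemma~\ref{prop_xzy1}, Lemma~\ref{prop_xzy2} and Remark~\ref{Remark3}, every tuple produced from $(p,q,z',y')$ is again of one of the two canonical shapes, the \emph{Phase~1} shape $(p,V,z',y')$ or the \emph{Phase~2} shape $(p,z',y',V)$, where the moving entry is $V=tp-sw$ for suitable integers $t,s\geq 0$; as $z'$ and $y'$ never change, the whole orbit is encoded by the pair consisting of the current phase and the number $V\in[0,p)$. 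The four admissible transitions, read off the two lemmas, are: (i) from Phase~1 with $V\geq y'$, an $11$-block with $V\mapsto V-w$; (ii) from Phase~1 with $z'\leq V<y'$, a $22$-block crossing into Phase~2 leaving $V$ fixed; (iii) from Phase~2 with $V\geq w$, an $11$-block with $V\mapsto V-w$; (iv) from Phase~2 with $0\leq V<w$, a $10$-block wrapping into Phase~1 with $V\mapsto V+(p-w)$. Since $V$ lives in the finite set $\{0,1,\dots,p-1\}$, eventual periodicity is forced, and Proposition~\ref{Prop_eventually} turns it into periodicity; Remark~\ref{Remark_xzy04} excludes recovering the initial tuple in the interior of a block under the standing hypothesis $y'>z'$.

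To compute the period I would count the four kinds of blocks. Let $n_1,n_2,n_3,n_4$ be their multiplicities over one period. Transitions (i), (iii), (iv) each shift $V$ by $-w$ modulo $p$ (for (iv), $V\mapsto V+(p-w)\equiv V-w$), while the crossing (ii) fixes $V$. Hence, ignoring the residue-preserving crossings, the residue of $V$ runs through $q,\,q-w,\,q-2w,\dots\pmod p$; because $\gcd(w,p)=1$ these residues exhaust $\{0,\dots,p-1\}$ and first return to $q$ after exactly $p$ residue-changing steps, so
\[
n_1+n_3+n_4=p .
\]
Moreover crossings and wraps alternate along the orbit (one enters Phase~2 only through (ii) and leaves it only through (iv)), and since the orbit begins and ends in Phase~1 we get $n_2=n_4$.

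The last input is that the moving entry, viewed as a genuine number in $[0,p)$, returns to its starting value over one period. The $n_1+n_3$ decrements contribute $-w(n_1+n_3)$ and the $n_4$ wraps contribute $+(p-w)n_4$, so $w(n_1+n_3)=(p-w)n_4$. Substituting $n_1+n_3=p-n_4$ gives $wp=pn_4$, whence $n_4=w=p-q$, and therefore $n_2=p-q$ and $n_1+n_3=q$. Collecting the block lengths,
\[
N=11(n_1+n_3)+22n_2+10n_4=11(p-n_4)+32n_4=11p+21(p-q)=(2p-q)\cdot 11+(p-q)\cdot 10,
\]
which is the asserted formula.

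The delicate part will be the closure claim underlying the first paragraph: one has to check that the inequalities separating cases (a) and (b) in Lemmas~\ref{prop_xzy1} and~\ref{prop_xzy2} are satisfied at every step, so that the orbit never leaves the two canonical shapes and the four transitions above are genuinely exhaustive. Here the facts $0<w<y'$ (from $z'>0$), $q=p-w\geq z'$, and that a wrap lands in $[q,p)$ are what keep $V$ confined to $[0,p)$ and the phases consistent. The second point requiring care is exactness: that $N$ is the true (primitive) period rather than a multiple of a shorter one. This is precisely guaranteed by $\gcd(w,p)=1$, which prevents the residue of $V$ (hence the canonical tuple) from returning to its start in fewer than $p$ residue-changing steps, together with Remark~\ref{Remark_xzy04} ruling out interior repetitions.
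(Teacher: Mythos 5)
Your proof is, in its skeleton, the same as the paper's: the four block types come from Lemmas~\ref{prop_xzy1} and~\ref{prop_xzy2} via Remark~\ref{Remark3}, periodicity follows from finiteness of the state set plus Proposition~\ref{Prop_eventually}, your balance $n_2=n_4$ is the paper's $\beta=\delta$, your value-closure identity $w(n_1+n_3)=(p-w)n_4$ is exactly the paper's equation~(\ref{pepe}), and the final arithmetic is correct. The genuine gap is the step ``\ldots first return to $q$ after exactly $p$ residue-changing steps, so $n_1+n_3+n_4=p$.'' What the residue argument proves is only that $p$ \emph{divides} $n_1+n_3+n_4$. The state of the orbit is the pair (phase, $V$), not the residue of $V$ alone, and when the residue first returns to $q$ the orbit could a priori be sitting in Phase~2: the Phase-2 state with $V=q$ is a perfectly legitimate state whenever $q<y'$, and orbits do visit it (for $(p,q,z',y')=(5,2,1,4)$ it is the second state of the cycle). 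If that were the state reached after $p$ residue-changing blocks, the minimal cycle would not close there; it would contain $2p$ (or more) residue-changing blocks, and then your own equations give $n_4=2w$, $n_2=2w$, i.e.\ period $2N$. So, as written, your argument only shows that the period is a positive integer multiple of $N$. Note that your closing paragraph on ``exactness'' addresses the wrong direction: it rules out a \emph{shorter} period, while the danger left open is a \emph{longer} one.

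The paper closes precisely this hole with an ingredient you never invoke in the counting: in a cycle of minimal period all visited states are distinct, and the admissible state set has cardinality $2p$ (the paper's $\mathcal{V}\setminus\{0^{(2)},p^{(4)}\}$), so $n_1+n_2+n_3+n_4\le 2p$; since moreover $n_2=n_4\ge 1$ (the Phase-1 value cannot decrease by $w$ forever, so a crossing must occur), one gets $n_1+n_3+n_4< 2p$, which together with divisibility by $p$ forces $n_1+n_3+n_4=p$. Alternatively, your residue argument can be repaired directly: the Phase-2 state with value $q$ can only be entered by a residue-preserving crossing from the Phase-1 state with value $q$, because entering it by a Phase-2 decrement would require the previous value to be $q+w=p$, impossible as Phase~2 forces $V<y'\le p$, and wraps land in Phase~1; hence at the moment of the $p$-th residue-changing step the orbit must be at the initial state $(p,q,z',y')$, and the cycle closes there. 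One of these two arguments has to be added; without it the identification of $n_1+n_3+n_4$ with $p$, and hence the period formula, is unjustified.
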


\begin{proof}
Firstly, as $q=p-(y'-z')$, the initial conditions $(p,q,z',y')$ satisfy the conditions of Lemma \ref{prop_xzy1} with $x=p$, $z=z'$, $y=y'$ and $t=1=s$. Also, by Remark \ref{Remark3}, its evolution under Equation (\ref{Eq:G4}) will be described by Lemmas \ref{prop_xzy1} and \ref{prop_xzy2}. 

Let us denote by $b_0^{(2)} := q$, where the superscript $(2)$ represents that the term $q$ is placed in the second position in the initial tuple. Now, depending on the requisites over the initial conditions, we will apply Lemma \ref{prop_xzy1}(a) or (b). In any case, we obtain, after the corresponding block of 11 or 22 iterations, a new tuple $(p,b_1^{(2)}, z',y')$ or $(p,z',y', b_1^{(4)})$, with $b_1^{(2)} = p - 2(y'-z') = q - (y'-z')$ and $b_1^{(4)} = q$, where we use $(2)$ or $(4)$ according to the fact that the term which has varied is in the second or fourth position. So, following the process (by applying Lemma \ref{prop_xzy1} or \ref{prop_xzy2} and taking into account the only term that varies its value), we can define a sequence $b_j^{(i_j)}$, with $j\geq 1, i_j\in \{2,4\}$. If $i_j = 2$, the new tuple is $(p,b_j^{(2)}, z',y')$; otherwise, we obtain $(p,z',y', b_j^{(4)})$. 

Once the sequence $(b_j^{(i_j)})$ is defined, it must be highlighted that $b_j^{(i_j)} \in \{0^{(2)}, 0^{(4)},1^{(2)},1^{(4)},\ldots,p^{(2)},p^{(4)}\} =: \mathcal{V}$. This is a consequence from the fact that $y'-z' = \frac{y-z}{d}$ is an integer number and the conditions of Lemmas \ref{prop_xzy1} and \ref{prop_xzy2} (it is clear that $b_0^{(2)}$ verifies the claim and now, by induction, it is easily seen that any $b_j^{(i_j)}$ will hold the property too; for instance, if we are in Lemma \ref{prop_xzy1}(a), then $b_j^{(2)} = tp - (s+1)(y'-z')$ for some integers $t,s\geq 1$ and $b_j^{(2)} > p$ would imply $tp - s(y'-z') > p + y'-z' > p$ which contradicts the initial hypothesis of Lemma \ref{prop_xzy1}; the remaining cases are analogous).

Next, we claim that $b_j^{(i_j)} \notin\{0^{(2)}, p^{(4)}\}$. This means that we cannot assign the value $0$ in the second position of the corresponding tuple nor $p$ in the fourth position. Again, this is a direct consequence from the hypothesis of Lemmas \ref{prop_xzy1} and \ref{prop_xzy2}. For instance, if in Lemma \ref{prop_xzy2}(b) we would obtain the tuple $(p, (t+1)p - (s+1)(y'-z'), z', y') = (p,0,z',y')$, then $tp - s(y'-z') = -p + y'-z' < 0$, which contradicts that $tp - s(y'-z') \geq 0$ in the initial hypothesis; the remaining cases are similar.   

Therefore, $b_j^{(i_j)} \in \mathcal{V} \setminus \{0^{(2)}, p^{(4)}\}$. So, we can only have a finite number of tuples $(p, b_j^{(2)}, z', y')$ or $(p,z',y',b_j^{(4)})$. By Lemmas \ref{prop_xzy1} and \ref{prop_xzy2}, we do a circuit with these tuples, thus we can assure that some tuple returns to itself and by Lemma \ref{Prop_eventually} all the sequence is periodic. Moreover, as $\mathrm{Card}(\mathcal{V} \setminus \{0^{(2)}, p^{(4)}\}) = 2p$, where $\mathrm{Card}$ denotes the cardinality of a set, periodicity will be attained by applying Lemmas \ref{prop_xzy1} and \ref{prop_xzy2} at most $2p$ times. 

Now we will determine the period of the sequence. According to Remark \ref{Remark_xzy04}, we know that is only necessary to assume that periodicity holds after a complete sequence of blocks described in Lemmas \ref{prop_xzy1} and \ref{prop_xzy2}. Denote by: $\alpha$ the number of blocks of 11 iterations described by Lemma \ref{prop_xzy1}(a); $\beta$ the amount of blocks of 22 iterations described by Lemma \ref{prop_xzy1}(b); $\gamma$ the number of blocks of 11 iterations described by Lemma \ref{prop_xzy2}(a); and $\delta$ the amount of blocks of 10 iterations described by Lemma \ref{prop_xzy2}(b), needed to achieve periodicity. 

Firstly, we claim that $\beta = \delta$. Indeed, the block of 22 iterations described by Lemma \ref{prop_xzy1}(b) changes the position of the term $b_j^{(2)}$ from the second to the fourth, but it does not vary its value; in the other hand, the block of 10 iterations described by Lemma \ref{prop_xzy2}(b), changes again the position of $b_j^{(4)}$ from the fourth to the second, so necessarily we must have the same number of blocks of each type, since every change of position from the second to the fourth place must be balanced with another change from the fourth to the second. Furthermore, $\beta \geq 1$ because the initial tuple $(p,q,z',y')$ satisfies the hypothesis of Lemma \ref{prop_xzy1} and at some point we will have $p - (m+1)(y'-z') < y'$ for some positive integer $m$ and case (b) will be applied. 

Secondly, according to the concatenation of blocks given by Remark \ref{Remark3}, periodicity will hold when 
\begin{equation} \label{pepe}
    (\delta+1)p - (\alpha+\gamma+\delta + 1)(y'-z') = q = p-(y'-z').
\end{equation}
Indeed, observe that the coefficient of $p$ is increased by one unit only if we apply the block of ten iterations of Lemma \ref{prop_xzy2}(b); on the other hand, the coefficient of $y'-z'$ remains unchanged when we apply the block of twenty two iterations of Lemma \ref{prop_xzy1}(b), and it is increased by one otherwise.

By (\ref{pepe}), $\delta p = (\alpha + \gamma + \delta)(y'-z')$. Observe that gcd$(p,q)=1$ yields to gcd$(p,y'-z')=1$, since $q=p-(y'-z')$. So, $(y'-z') | \delta$ or $(p-q) | \delta$ and $\alpha + \gamma + \delta = \dot{p}$. Nevertheless, as $b_j^{(i_j)}$ varies in the set $\mathcal{V} \setminus \{0^{(2)}, p^{(4)}\}$ of cardinality $2p$, and $\alpha + \beta + \gamma + \delta$ represents the number of times that $b_j^{(i_j)}$ changes either its value or/and position, then $\alpha + \beta + \gamma + \delta \leq 2p$. Since $\beta \geq 1$, it follows that $\alpha + \gamma + \delta < 2p$; and jointly with $\alpha + \gamma + \delta = \dot{p}$, we get $\alpha + \gamma + \delta = p$. Then $\delta p = p \cdot (p-q)$ and thus $\delta = p-q$. 

Definitely, the sequence will be determined by $\alpha$ blocks of 11 iterations; $\beta = \delta$ blocks of 22 iterations; $\gamma$ blocks of 11 iterations; and $\delta$ blocks of 10 iterations. Thus, the period of the sequence is 
\begin{eqnarray*}
N &=& \alpha \cdot 11 + \delta \cdot 22 + \gamma \cdot 11 + \delta \cdot 10  = (\alpha + 2\delta + \gamma)\cdot 11 + \delta \cdot 10 \\
&=& (p + p-q)\cdot 11 + (p-q)\cdot 10 = (2p-q)\cdot 11 + (p-q)\cdot 10.
\end{eqnarray*}

\end{proof}

\textbf{Case B:} Assume that at least one of the initial conditions is not rational. Firstly, by Lemma \ref{prop_xzy1} and Lemma \ref{prop_xzy2}, we will have periodicity when $x - (y-z) = tx - s(y-z)$ holds for a certain $t, s \geq 0$. That relation yields to $\frac{x}{y-z} = \frac{s-1}{t-1} \in \mathbb{Q}.$ Therefore, $\frac{x}{y-z} \in \mathbb{Q}$ is a necessary condition to assure periodicity. Let $\frac{x}{y-z} = \frac{p}{m}$ with $\gcd(p,m) = 1$. Then, by Proposition \ref{Prop_scalar}, we can modify the initial conditions of the sequence as follows
\begin{eqnarray*}
(x, x - (y-z), z, y) &\longrightarrow& \left( \frac{x}{y-z}, \frac{x}{y-z} - 1, \frac{z}{y-z}, \frac{y}{y-z} \right) \\ &\longrightarrow& \left( \frac{p}{m}, \frac{p}{m} - 1, \frac{z}{y-z}, \frac{y}{y-z} \right) \longrightarrow (p, p-m, z', y').
\end{eqnarray*}

\noindent with $z' = m \cdot \frac{z}{y-z}$, $y' = m \cdot \frac{y}{y-z}$ and $\gcd(p,m) = 1$, which implies $\gcd(p, p-m) = 1$. Moreover, $y'-z' = m \cdot \frac{y}{y-z} -  m \cdot \frac{z}{y-z} = m \in \mathbb{N}$. Thus, if we denote by $q = p-m$, the initial conditions are $(p,q,z',y')$ with gcd$(p,q)=1$ and $y'-z'\in \mathbb{N}$; and the reasoning of Proposition \ref{N_controversial2} is valid for this case too, namely, the period of the sequence is $N=(2p-q)\cdot 11 + (p-q)\cdot 10$.

\begin{remark} \label{Remark_controv2}
Notice that the coefficients of the decomposition, that is, $a=p-q$ and $b=2p-q$ verify the conditions $b \geq 2a + 1$ and $\mathrm{gcd}(a,b) = 1$. Conversely, every number of the form $\widetilde{N} = 10\cdot a + 11 \cdot b$ with $a,b\in \mathbb{N}$, $b\geq 2a +1$ and $\gcd(a,b)=1$ can be written as $\widetilde{N} = 10\cdot (p-q) + 11 \cdot (2p-q)$. It only suffices to take $p = b-a$ and $q = b - 2a$.
\end{remark}

\begin{corollary} \label{Cor_xzy0}
Given the initial conditions $(x,z,y,0)$ with $x \geq y > z > 0$, they generate a periodic sequence under Equation (\ref{Eq:G4}) if and only if $\frac{x}{y-z} \in \mathbb{Q}$. In this case, the period of the sequence is $N = 10 \cdot a + 11 \cdot b$, for some $a,b \in \mathbb{N}$ with $\gcd(a,b)=1$ and $b \geq 2a + 1$.
\end{corollary}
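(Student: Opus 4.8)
The plan is to assemble the corollary from the case analysis already carried out for the tuple $(x,z,y,0)$ in this subsection, proving the two implications separately and then recasting the resulting period in the required additive form. Since the hypothesis is exactly $x\geq y>z>0$, I may start from the observation that after ten iterations $(x,z,y,0)$ becomes $(x,x-(y-z),z,y)$; by Proposition~\ref{Prop_eventually} periodicity of the former is equivalent to periodicity of the latter with the same period, so throughout I work with $(x,x-(y-z),z,y)$, whose evolution is completely governed by Lemmas~\ref{prop_xzy1} and~\ref{prop_xzy2} concatenated according to Remark~\ref{Remark3}.

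For the necessity of $\frac{x}{y-z}\in\mathbb{Q}$, I would exploit that every tuple occurring in this evolution has the form $(x,tx-s(y-z),z,y)$ or $(x,z,y,tx-s(y-z))$ with $t,s$ nonnegative integers, and that the starting tuple corresponds to $t=s=1$. A periodic orbit must return to the starting tuple, so the varying coordinate must again equal $x-(y-z)$, i.e. $t'x-s'(y-z)=x-(y-z)$ for parameters $(t',s')\neq(1,1)$ arising later in the orbit. This gives $(t'-1)x=(s'-1)(y-z)$; the case $t'=1$ would force $s'=1$ since $y-z>0$, so $t'\neq 1$ and $\frac{x}{y-z}=\frac{s'-1}{t'-1}\in\mathbb{Q}$, which is exactly the computation opening Case~B.

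For sufficiency together with the period formula, I would assume $\frac{x}{y-z}\in\mathbb{Q}$ and reduce to the normalized integer tuple handled by Proposition~\ref{N_controversial2}. If $x,y,z$ are all rational (Case~A), I clear denominators using Proposition~\ref{Prop_scalar} and then divide by $d=\gcd(y-z,x)$ to obtain $(p,q,z',y')$ with $q=p-(y'-z')$, $y'-z'\in\mathbb{N}$ and $\gcd(p,q)=1$. If instead some entry is irrational (Case~B), I scale by $(y-z)^{-1}$, write $\frac{x}{y-z}=\frac{p}{m}$ in lowest terms, and land on the same normalized form with $q=p-m$, $y'-z'=m\in\mathbb{N}$ and $\gcd(p,q)=1$. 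In both situations the counting argument of Proposition~\ref{N_controversial2}---finitely many admissible states in $\mathcal{V}\setminus\{0^{(2)},p^{(4)}\}$, the balance $\beta=\delta$, and the divisibility forcing $\alpha+\gamma+\delta=p$ and $\delta=p-q$---applies unchanged and delivers $N=(2p-q)\cdot 11+(p-q)\cdot 10$.

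Finally I would translate this into $N=10a+11b$ by setting $a=p-q$ and $b=2p-q$, as in Remark~\ref{Remark_controv2}: then $N=10a+11b$, while $b-2a=q\geq 1$ gives $b\geq 2a+1$, and $\gcd(a,b)=\gcd(p-q,2p-q)=\gcd(p-q,p)=\gcd(q,p)=1$. Since $a=p-q=y'-z'\geq 1$ and $b=2p-q>0$, both lie in $\mathbb{N}$, completing the statement. I expect no real obstacle, because all the analytic weight sits in Proposition~\ref{N_controversial2}; the only points requiring attention are checking that the irrational Case~B genuinely yields an \emph{integer} $y'-z'=m$ (so that the finiteness-of-states argument survives), and keeping the two normalizations straight so that the same period formula emerges from both.
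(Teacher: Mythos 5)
Your proposal is correct and follows essentially the same route as the paper: after ten iterations reduce to the tuple $(x,x-(y-z),z,y)$ governed by Lemmas~\ref{prop_xzy1} and~\ref{prop_xzy2}, extract the necessity of $\frac{x}{y-z}\in\mathbb{Q}$ from the block parametrization, obtain sufficiency and the period $(2p-q)\cdot 11+(p-q)\cdot 10$ via the rational/irrational normalizations (Cases A and B) feeding into Proposition~\ref{N_controversial2}, and rewrite with $a=p-q$, $b=2p-q$ exactly as in Remark~\ref{Remark_controv2}. The only step the paper makes explicit that you leave implicit is the exclusion of periodicity occurring in the middle of a block (Remark~\ref{Remark_xzy04}), but that exclusion is already incorporated into Proposition~\ref{N_controversial2}, which you invoke, so no gap results.
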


\subsection{Intersection between cases $C_i'$s} \label{Sec_intersection}

Firstly, remember the cases $C_i$, defined for the study of the evolution of a tuple under Equation (\ref{Eq:G4}):

\textbf{Case 1 (}$\mathbf{C_1}$\textbf{):} $x_1 \geq x_2 \geq x_4 \geq x_3$.

\textbf{Case 2 (}$\mathbf{C_2}$\textbf{):} $x_1 \geq x_3 \geq \max\{x_2, x_4\}$ with $x_3 \geq x_2 + x_4$.

\textbf{Case 3 (}$\mathbf{C_3}$\textbf{):} $x_1 \geq x_3 \geq \max\{x_2, x_4\}$ with $x_3 \leq x_2 + x_4$. 

\textbf{Case 4 (}$\mathbf{C_4}$\textbf{):} $x_1 \geq x_4 \geq x_2 \geq x_3$.

\textbf{Case 5 (}$\mathbf{C_5}$\textbf{):} $x_1 \geq x_4 \geq x_3 \geq x_2$.

As the different inequalities are not strict, it may be possible that a tuple $(x_1, x_2, x_3, x_4)$ verify the conditions of two different cases. Notice that this happens when at least one of the terms are repeated or when $x_3 = x_2 + x_4$, which means that the initial conditions satisfy the cases $C_2$ and $C_3$ simultaneously. However, if this happens, the tuple will be of the form $(x,z,y,y-z)$, which is equivalent to the controversial case $(x,x,z,y)$. Indeed, if we compute the tuple under Equation (\ref{Eq:G4}), $ x, z, y, y-z, y-x, y-z, -z, 0, x-z,$ $x-y, x, x, z, y. $ So, $(x,z,y,y-z) \sim (x,x,z,y)$ which, in turn, is equivalent to $(x,z,y,0)$ by Proposition \ref{prop_controversial} and, hence, this case has been already analysed in Subsection \ref{Sec_xzy0}.   

Now we will study the intersections of cases $C_i'$s depending on the number of terms that are repeated. In what follows we will assume $x > y > z \geq 0$.
\begin{itemize}
    \item \textbf{The four terms are equal:}
        \subitem $\circ$ $(x,x,x,x):$ it generates an $11$-cycle (see Proposition \ref{Prop_monotonic}).
    \item \textbf{Three terms are equal:}
        \subitem $\circ$ $(x,x,x,y)$ and $(x,y,y,y)$: they are $11$-cycles due to the monotonicity of the terms.
        \subitem $\circ$ $(x,x,y,x)$: it is a controversial case. See Sections \ref{Sec_controversial} and \ref{Sec_specific}.
        \subitem $\circ$ $(x,y,x,x)$: under applying Equation (\ref{Eq:G4}), we obtain $x, y, x,$ $x,$ $0, x-y$. Then $(x,y,x,x) \sim (x,x,0,x-y)$, which behaves as the controversial case $(x,x,0,y')$. 
    \item \textbf{Two terms are equal:}
        \subitem $\circ$ $(x,x,y,z)$, $(x,y,y,z)$, $(x,y,z,z)$ and $(x,x,y,y)$: they are $11$-cycles due to the monotonicity of the terms. 
        \subitem $\circ$ $(x,y,x,z)$, $(x,x,z,y)$ and $(x,z,y,x)$: they are controversial cases, already analyzed in Section \ref{Sec_controversial}.
        \subitem $\circ$ $(x,y,z,x)$: we iterate the tuple under Equation (\ref{Eq:G4}) - $x, y, z, x,$ $0, x-y, x-z$. Then $(x,y,z,x) \sim (x,0,x-y,x-z)$, which behaves as the controversial case $(x,0,z',y')$.
        \subitem $\circ$ $(x,z,x,y)$: by iterating the initial conditions, $x, z, x, y,$ $0,$ $x-z$. So $(x,z,x,y) \sim (x,y,0,x-z)$. If $y\geq x-z$, then $(x,y,0,x-z)$ behaves as the controversial case $(x,y',0,z')$; on the contrary, if $y < x-z$, then $(x,y,0,x-z)$ behaves as the controversial case $(x,z',0,y')$.
        \subitem $\circ$ $(x,y,x,y)$: the sequence evolves as $x, y, x, y,$ $0, x-y$. Then, $(x,y,x,y) \sim (x,y,0,x-y)$. If $y\geq x-y$, then $(x,y,0,x-y)$ behaves as the controversial case $(x,y',0,z')$; on the other hand, if $y< x-y$, then it behaves as $(x,z',0,y')$, which is a controversial case too.
        \subitem $\circ$ $(x,y,y,x)$: we iterate the tuple, $x, y, y, x,$ $0, x-y, x-y$. Then $(x,y,y,x) \sim (x,0,x-y,x-y)$, which is like the controversial case $(x,0,y',y')$.
        \subitem $\circ$ $(x,y,z,y)$, $(x,z,z,y)$, $(x,z,y,y)$ and $(x,z,y,z)$. These cases have not been considered previously, so they must be analyzed.
\end{itemize}

Therefore, we only have to study the tuples $(x,y,z,y)$, $(x,z,z,y)$, $(x,z,y,y)$ and $(x,z,y,z)$. Nevertheless, we state that the four tuples are equivalent.

\begin{proposition}
Let $x,y,z$ be real numbers such that $x \geq y \geq z \geq 0$. Then we have the following relations of equivalence: $$(x,y,z,y) \sim (x,z,z,y) \sim (x,z,y,y) \sim (x,z,y,z).$$
\end{proposition}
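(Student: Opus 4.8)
The plan is to use the definition of the equivalence relation $\sim$ exactly as in the proof of Proposition~\ref{prop_controversial}: to show that two tuples are equivalent it suffices to iterate one of them forward under Equation~(\ref{Eq:G4}) and to exhibit the other as a window of four consecutive terms $(x_{j+1},x_{j+2},x_{j+3},x_{j+4})$ in the resulting orbit, since then both tuples generate the same bi-infinite sequence up to a shift. Because $\sim$ is transitive, I would split the asserted chain into the three links $(x,y,z,y)\sim(x,z,z,y)$, $(x,z,z,y)\sim(x,z,y,y)$ and $(x,z,y,y)\sim(x,z,y,z)$, verify each one separately, and then concatenate them.

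For the first link I would start from $(x,y,z,y)$ and compute $x_5,\dots,x_{15}$, using $x\ge y\ge z\ge 0$ to resolve each maximum. The orbit reads
\[
x,\,y,\,z,\,y,\,y-x,\,0,\,y-z,\,-z,\,x-z,\,x-z,\,x-y,\,x,\,z,\,z,\,y,
\]
so that the window at positions $12$--$15$ is exactly $(x,z,z,y)$, giving $(x,y,z,y)\sim(x,z,z,y)$. The remaining two links exhibit the identical ``shift by eleven'' behaviour: iterating $(x,z,z,y)$ returns $(x,z,y,y)$ eleven steps later, and iterating $(x,z,y,y)$ returns $(x,z,y,z)$ eleven steps later. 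In each case I would record the eleven intermediate terms and read off the target tuple in the window $(x_{12},x_{13},x_{14},x_{15})$, precisely as above.

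The only delicate point, and the sole place where care is required, is the repeated evaluation of the maxima: at every step one must invoke the appropriate consequence of $x\ge y\ge z\ge 0$ — for instance $y-x\le 0$, $y-z\ge 0$, $x-z\ge x-y\ge 0$ and $x-z\ge y-z$ — to determine which argument attains $\max\{x_{n+3},x_{n+2},x_{n+1},0\}$. Since the hypothesis furnishes a total ordering of $x$, $y$, $z$ and $0$, every maximum is unambiguous and no genuine case split arises, so the three computations are entirely routine and the transitivity of $\sim$ closes the argument. I anticipate no conceptual obstacle here; the task amounts purely to the bookkeeping of the three eleven-step iterations.
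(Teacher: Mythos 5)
Your proposal is correct and takes essentially the same approach as the paper: the paper computes the single forward orbit of $(x,y,z,y)$ under Equation~(\ref{Eq:G4}) and reads off the windows $(x,z,z,y)$, $(x,z,y,y)$ and $(x,z,y,z)$ at positions $12$--$15$, $23$--$26$ and $34$--$37$, which, since the equation is autonomous, is exactly your three eleven-step links concatenated. All three of your claimed iterations check out (in particular the two you did not write down explicitly), so nothing is missing.
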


\begin{proof}
We just have to compute the terms under Equation (\ref{Eq:G4}).
$$ \begin{matrix} \textsf{x, y, z, y,} & y-x, & 0, & y-z, & -z, & x-z, & x-z, & x-y,& \\\textsf{ x, z, z, y,} & y-x, & y-z, & y-z, & -z, & x-z, & x-y, & x-y, & \\ \textsf{x, z, y, y,} & y-x, & y-z, & 0, & -z, & x-z, & x-y, & x-z,  & \textsf{x, z, y, z}. \end{matrix} $$
\end{proof}

Furthermore, we claim that the initial conditions $(x,y,z,y)$ are a particular case of the controversial $(x,z,y,0)$. Indeed, by the simplifications made in Subsection \ref{Sec_xzy0}, $(x,z,y,0) \sim (x, x - (y-z), z, y)$. Then, if $x+z = 2y$, we get the particular case $(x,y,z,y)$. Thus, we can apply Corollary \ref{Cor_xzy0}. 

Summarizing, if we have initial conditions holding more than one case $C_i$, $i=1, \ldots, 5$, that generate a periodic sequence, we can assure that its period, $N$, is either $1,8,11$, or follows the pattern $N = 10\cdot a + 11\cdot b$, with $\gcd(a,b) = 1$ and $b \geq 2a +1$. 

\subsection{Condition U} \label{Sec_diagram}

Finally, we only have to discuss the initial conditions that generate under Equation~(\ref{Eq:G4}) a periodic sequence that follows Diagram \ref{Diagrama} unambiguously, that is, the initial tuple $(x_1,x_2,x_3,x_4)$ and each tuple after a block of ten or eleven iterations verify one and only one case $C_i$.  So, if the tuple $(x_1,x_2,x_3,x_4)$ only admits the conditions of one case $C_i$, while applying Diagram \ref{Diagrama}, it will exist a unique well-defined cycle. Therefore, in the rest of the subsection we will assume the following condition:

\begin{quote} 
    \textbf{Condition U:} A tuple of initial conditions $(x_1,x_2,x_3,x_4)$ only verifies one case $C_i$ and have exactly a unique movement according to Diagram \ref{Diagrama}.
\end{quote}

So, by Condition U we understand that it does not exist ambiguity when we go from one case $C_i$ to another $C_j$. However, if at some point of the circuit we could have the possibility of choosing more than one case $C_i$, then we would face one of the intersection cases studied before in Subsection \ref{Sec_intersection}.  

To illustrate Condition U, let us consider the initial conditions $x_1 = 8,$ $x_2 = 2$, $x_3 = 1$ and $x_4 = 5$. Observe that $(8,2,1,5)$ verifies the conditions of case $C_4$. If we compute this tuple under Equation $(\ref{Eq:G4})$, we get a $43$-cycle:
{\small{\begin{eqnarray*}
& \textbf{8}, \ \ \textbf{2}, \ \ \textbf{1}, \ \ \textbf{5}, \ \ -3, \ \ 3, \ \ 4, \ \ -1, \ \ 7, \ \ 4, \ \ 3, \\
& \mathbf{8}, \ \ \mathbf{1}, \ \ \mathbf{4}, \ \ \mathbf{5}, \ \ -3, \ \ 4, \ \ 1, \ \ -1, \ \ 7, \ \ 3, \ \ 6, \\
& \mathbf{8}, \ \ \mathbf{1}, \ \ \mathbf{5}, \ \ \mathbf{2}, \ \ -3, \ \ 4, \ \ -1, \ \ 2, \ \ 7, \ \ 3,   \\
& \mathbf{8}, \ \ \mathbf{6}, \ \ \mathbf{1}, \ \ \mathbf{5}, \ \ -2, \ \ -1, \ \ 4, \ \ -1, \ \ 6, \ \ 7, \ \ 3, \ \ \mathbf{8}, \ \ \mathbf{2}, \ \ \mathbf{1}, \ \ \mathbf{5}.  
\end{eqnarray*}}}
As it can easily be seen, after the corresponding block of 10 or 11 iterations, the new tuple satisfy one and only one case. Indeed, $(8,2,1,5)$ is in $C_4$; $(8,1,4,5)$ verifies the conditions of $C_5$; $(8,1,5,2)$ satisfies the case $C_2$; and $(8,6,1,5)$ is in $C_1$. 

Now a natural question arises, can we have a cycle that always stays in the same case? As we can appreciate in Figure \ref{Diagrama}, Cases $C_1$ and $C_3$ are the only cases where after 11 iterations we can stay in the same case. The answer is only positive for $11$-cycles or the equilibrium point.

\begin{proposition} \label{Prop_loop}
Let $(x_n)$ be a periodic sequence of period $p \geq 12$. Assume that the initial conditions satisfy $C_1$ or $C_3$. Then, there exists an $m>1$ such that after $11\cdot m$ iterations, the new tuple $(x_{1+11m},x_{2+11m},x_{3+11m},x_{4+11m})$ do not satisfy the same inequalities as the initial terms in $C_1$ or $C_3$.
\end{proposition}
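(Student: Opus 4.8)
The plan is to track, in each of the two self-looping cases, exactly which coordinate of the tuple $(x_1,x_2,x_3,x_4)$ varies under one block of $11$ iterations, and to show that this coordinate moves strictly monotonically by a fixed step, so that the defining inequalities of the case are eventually violated. Throughout, $x_1=\max\{x_j:1\leq j\leq p\}$ is preserved (the evolution in Table \ref{Table_cases} returns $x_{12}=x_1$), so it suffices to follow the other three coordinates.

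First consider $C_1$. By Table \ref{Table_cases}, a tuple with $x_1\geq x_2\geq x_4\geq x_3$ becomes $(x_1,\,x_2+x_3-x_4,\,x_3,\,x_4)$ after $11$ iterations, so the third and fourth coordinates are frozen and only the second changes, from $x_2$ to $x_2-(x_4-x_3)$. The decisive point is that $x_4>x_3$: if instead $x_4=x_3$, then $x_1\geq x_2\geq x_3=x_4$ would be four consecutive monotonic terms, forcing an $11$-cycle by Proposition \ref{Prop_monotonic} and contradicting $p\geq 12$. Hence, as long as the orbit remains in $C_1$, after $m$ blocks the second coordinate equals $x_2-m(x_4-x_3)$, a strictly decreasing arithmetic progression. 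Remaining in $C_1$ requires this coordinate to stay $\geq x_4>0$, which a sequence decreasing by the fixed positive step $x_4-x_3$ cannot do indefinitely; thus after finitely many blocks the tuple violates the $C_1$ inequalities (passing to $C_4$, cf. Figure \ref{Diagrama}), which provides the required $m$.

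The case $C_3$ is handled in the same way. Here Table \ref{Table_cases} sends a tuple with $x_1\geq x_3\geq\max\{x_2,x_4\}$ and $x_3\leq x_2+x_4$ to $(x_1,\,x_2,\,x_3,\,x_2+x_4-x_3)$, so now $x_2,x_3$ are frozen and only the fourth coordinate moves, from $x_4$ to $x_4-(x_3-x_2)$. Again one checks $x_3>x_2$: the equality $x_3=x_2$ would make $x_1\geq x_2=x_3\geq x_4$ four consecutive monotonic terms and force an $11$-cycle by Proposition \ref{Prop_monotonic}, against $p\geq 12$. Therefore the fourth coordinate strictly decreases by the fixed positive step $x_3-x_2$, while staying in $C_3$ keeps it above a fixed positive bound (the transition $C_3\to C_2$ in Figure \ref{Diagrama} occurs precisely when it falls below that bound). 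As before this cannot persist, so the orbit exits $C_3$ after finitely many blocks, yielding $m$.

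The only genuinely delicate step — and the sole place the hypothesis $p\geq 12$ enters — is establishing the strictness of the step, i.e.\ excluding $x_4=x_3$ in $C_1$ and $x_2=x_3$ in $C_3$; both are ruled out because the corresponding equality produces four consecutive monotonic terms and hence, by Proposition \ref{Prop_monotonic}, an $11$-cycle. Granting strict monotonicity, the conclusion reduces to the elementary observation that an arithmetic progression with nonzero common difference cannot remain in a fixed bounded interval, so it must leave the admissible region of the case after a finite number of blocks.
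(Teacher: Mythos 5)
Your proof is correct and takes essentially the same approach as the paper: both arguments rest on the observation that under repeated blocks of $11$ iterations only one coordinate moves, by the fixed step $x_4-x_3$ (case $C_1$) or $x_3-x_2$ (case $C_3$), combined with Proposition~\ref{Prop_monotonic} to dispose of the degenerate situation in which that step vanishes. The only difference is in how the contradiction is closed — the paper supposes the orbit stays in the case forever and lets periodicity force the step to equal zero (hence monotonic initial data, hence an $11$-cycle, contradicting $p\geq 12$), whereas you first exclude the zero step and then let the strictly decreasing arithmetic progression escape the admissible region; you also write out case $C_3$ explicitly, which the paper leaves to the reader.
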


\begin{proof}
We will prove the case where the initial conditions satisfy $C_1$. Case $C_3$ is analogous and it will be left for the reader.

Let us assume the opposite, i.e., the tuple $(x_{1+11j},x_{2+11j},x_{3+11j},x_{4+11j})$ satisfy $C_1$ for every $j \in \mathbb{N}\cup \{0\};$ in particular, $x_1\geq x_2\geq x_4\geq x_3$.  By induction, it is easy to check that after $m$ blocks of 11 iterations we obtain the tuple $(x_1,x_2+m\cdot x_3-m\cdot x_4,x_3,x_4)$. If we force periodicity, the equality $x_2 + m\cdot x_3 - m\cdot x_4 = x_2$ must hold, but that relation yields to $x_3 = x_4$, what means that the initial conditions are monotonic. Therefore we have an 11-cycle if $x_1>0$ or the equilibrium point if $x_1=0$.
\end{proof}

From now on, we will consider Figure~\ref{Diagrama} as an oriented graph, $G=(V,U)$, where $V=\{C_1,C_2,C_3,C_4,C_5\}$ is a finite set and $U \subset V \times V$. The elements of $V$ are the vertices of the oriented graph $G$ and each element $(C_i, C_j) \in U$ will be called an arrow from $C_i$ to $C_j$. Thus, the elements of $U$ are the arrows of $G$. A path that always visits the same vertex is called a loop (our graph $G$ only admits two loops, one in $C_1$, and another in $C_3$). A route is a circuit that visits each vertex once, except the possibility of having a loop. We denote them by $R_i$. Also, in view of Diagram \ref{Diagrama}, we assume without loss of generality that the initial conditions satisfy case $C_4$, and then periodicity will hold when the corresponding tuple verifies again the conditions of the same case. Our graph only admits the following routes (see Diagram~\ref{Diagrama}):

$R_1: C_4 \longrightarrow C_5 \longrightarrow C_2 \longrightarrow C_1 \rightarrow \ldots \rightarrow C_1 \longrightarrow C_4.$ 

$R_2: C_4 \longrightarrow C_5 \longrightarrow C_2 \longrightarrow C_4.$ 

$R_3: C_4 \longrightarrow C_5 \longrightarrow C_3 \rightarrow \ldots \rightarrow C_3 \longrightarrow C_2 \longrightarrow C_1 \rightarrow \ldots \rightarrow C_1 \longrightarrow C_4.$

$R_4: C_4 \longrightarrow C_5 \longrightarrow C_3 \rightarrow \ldots \rightarrow C_3 \longrightarrow C_2 \longrightarrow C_4. $

The length of each route will be the number of iterations needed to go from the initial conditions until the final step. In our case, $$|R_1| = 43 + 11\cdot m,\ \ |R_2| = 32, \ \ |R_3| = 54 + 11\cdot (m+n), \ \  |R_4| = 43 + 11\cdot n,$$ where $m\geq 0$ and $n\geq 0$ are the times that the vertices $C_1$ and $C_3$ are repeated in the loops. A cycle in $G$ is a trajectory that starts and ends in the same vertex. Notice that a cycle is a finite concatenation of the routes $R_i$. We assume, without loss of generality, that \textit{we start and end in $C_4$.}

In this sense, the evolution of a tuple $(x_1,x_2,x_3,x_4)$, with $x_1 \geq x_4 \geq x_2 \geq x_3$, under each route, will be: 
\begin{eqnarray*}
R_1: (x_1,x_2,x_3,x_4) &\Longrightarrow& (x_1, x_1 + x_2 - (m+2)\cdot (x_4 - x_3), x_3, x_4); \\
R_2: (x_1,x_2,x_3,x_4) &\Longrightarrow& (x_1, x_1 + x_2 - (x_4 - x_3), x_3, x_4); \\
R_3: (x_1,x_2,x_3,x_4) &\Longrightarrow& (x_1, x_1 + x_2 - (m+n+3)\cdot (x_4 - x_3), x_3, x_4); \\
R_4: (x_1,x_2,x_3,x_4) &\Longrightarrow& (x_1, x_1 + x_2 - (n+2)\cdot (x_4 - x_3), x_3, x_4).
\end{eqnarray*}

We will see in detail the evolution of $R_1$ so that the reader can perfectly understand the behaviour of the routes. The other cases are analogous and will be omitted. For instance, let $(x_1, x_2, x_3, x_4)$ be a tuple of positive terms verifying the conditions of $C_4$, that is, $x_1 \geq x_4 \geq x_2 \geq x_3$. We consider Table \ref{Table_cases} to develop what follows:
\begin{eqnarray*}
C_4: (x_1, x_2, x_3, x_4) &\xrightarrow{11}& (x_1, x_3, x_4 + x_3 - x_2, x_4) \in C_5 \\
&\xrightarrow{11}& (x_1, x_3, x_4, x_2) \in C_2 \\
&\xrightarrow{10}& (x_1, x_1 + x_2 + x_3 - x_4 , x_3, x_4) \in C_1 \\
&\xrightarrow{11}& (x_1, x_1 + x_2 + 2(x_3 - x_4), x_3, x_4) \in C_1 \\
&\xrightarrow{11}& \ldots \\
&\xrightarrow{11}& (x_1, x_1 + x_2 + (m+1)(x_3-x_4), x_3, x_4) \in C_1 \\
&\xrightarrow{11}& (x_1, x_1 + x_2 + (m+2)(x_3-x_4), x_3, x_4) \in C_4,
\end{eqnarray*}
\noindent where $m$ denotes the loops that occur in $C_1$. Notice that $x_1 + x_2 + jx_3 \geq (j+1)x_4$ for $j=1,\ldots,m$ and $x_1 + x_2 + (m+1)x_3 \leq (m+2)x_4$.

\subsubsection{Evolution by concatenation of routes}

Now, we will analyse the behaviour of a periodic sequence $(x_n)$, whose initial conditions follow a cycle which is a concatenation of the routes $R_i$. Thus, if we denote by $A_i$ the number of times that each route $R_i$ appears, $i=1,2,3,4$, we can  precise the evolution of the initial tuple $(x_1,x_2,x_3,x_4)$ along the cycle. Notice that when we have described the routes $R_i$, the second term is the only one that changes at the end of each route. So, to simplify the notation, we will only write the evolution of the second term. The study of this fact, which is based on induction, is mechanical and will be omitted.

Firstly, we start analysing the evolution after $A_1$ routes of $R_1$. Here, the term $x_2$ becomes into $A_1\cdot x_1 + x_2 - (m_1+ \ldots + m_{A_1} + 2\cdot A_1)\cdot(x_4 -  x_3)$, being $m_i \ (i= 1, \dots, A_1)$ the number of loops in $C_1$ that occurs in each route.

Secondly, the evolution after $A_2$ routes of $R_2$ transforms $x_2$ into $A_2 \cdot x_1 + x_2 - A_2\cdot(x_4 - x_3).$ 

Next, we focus on the case of $A_3$ routes of the type $R_3$. Let us denote $\tilde{m}_i \ (i= 1, \dots, A_3)$ and $\tilde{n}_j \ (j= 1, \dots, A_3)$  as the numbers of loops in $C_1$ and $C_3$, respectively. Then, $x_2$ will evolve to $A_3\cdot x_1 + x_2 - (\tilde{m}_1+\ldots + \tilde{m}_{A_3} + \tilde{n}_1 + \ldots + \tilde{n}_{A_3} + 3\cdot A_3)\cdot(x_4 - x_3)$. 

Finally, we study the case of $A_4$ routes of the type $R_4$. In what follows, $n_i \ (i= 1, \dots, A_4)$ denotes the number of loops in $C_3$ that occurs in each route. Then, $x_2$ becomes into $A_4 \cdot x_1 + x_2 - (n_1+ \ldots + n_{A_4} + 2\cdot A_4)\cdot (x_4 - x_3).$

Now, after studying each case in particular, we are able to express the general case. Observe that the order of the routes does not affect in the determination of the period. For instance, the final effect of $R_1, R_1, R_2, R_2$ is the same as $R_1, R_2, R_1, R_2$.

\begin{corollary} \label{Cor_routes}
If the periodic sequence $(x_n)$ follows $A_1$ routes $R_1$, $A_2$ routes $R_2$, $A_3$ routes $R_3$ and $A_4$ routes $R_4$, the second term will end as: $$(A_1 + A_2 + A_3 + A_4)\cdot x_1 + x_2 - (H + 2A_1 + 2A_4 + A_2 + 3\cdot A_3)\cdot (x_4 - x_3),$$\noindent where $H = m_1+ \ldots + m_{A_1} + n_1+ \ldots + n_{A_4} + \tilde{m}_1+\ldots + \tilde{m}_{A_3} + \tilde{n}_1 + \ldots + \tilde{n}_{A_3}$.
\end{corollary}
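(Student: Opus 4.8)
The plan is to treat each route as a transformation of the four-tuple and to exploit the fact that it has a particularly simple shape: it fixes the first, third and fourth coordinates and merely translates the second one. Indeed, the displayed evolution formulas for $R_1,R_2,R_3,R_4$ preceding the statement show that a single route sends $(x_1,x_2,x_3,x_4)$ to $(x_1,\,x_1+x_2-c\cdot(x_4-x_3),\,x_3,\,x_4)$, where the coefficient $c$ depends only on the route type and on the number of loops performed inside it: $c=m+2$ for $R_1$, $c=1$ for $R_2$, $c=\tilde m+\tilde n+3$ for $R_3$, and $c=n+2$ for $R_4$. Since $x_1$, $x_3$ and $x_4$ are never altered along the whole cycle, each route acts on the (scalar) second coordinate as the translation $x_2\mapsto x_2+\bigl(x_1-c\,(x_4-x_3)\bigr)$ by a constant amount. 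First I would record this observation explicitly, as it reduces the composition of routes to a composition of translations of $\mathbb{R}$.

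The key point is then immediate: translations of $\mathbb{R}$ commute, and a composition of translations is the translation by the sum of the individual shifts. This is precisely the order-independence noted just before the statement (the effect of $R_1,R_1,R_2,R_2$ equals that of $R_1,R_2,R_1,R_2$), and it means that a cycle consisting of $A_1$ routes $R_1$, $A_2$ routes $R_2$, $A_3$ routes $R_3$ and $A_4$ routes $R_4$, taken in \emph{any} order, shifts the second coordinate by the sum of the four partial shifts already computed route-type by route-type in the preceding paragraphs. Thus I would simply add those four contributions.

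Carrying out the bookkeeping, the $x_1$-parts contribute $(A_1+A_2+A_3+A_4)\,x_1$, while the coefficients of $-(x_4-x_3)$ contribute
\[
(m_1+\cdots+m_{A_1}+2A_1)+A_2+(\tilde m_1+\cdots+\tilde m_{A_3}+\tilde n_1+\cdots+\tilde n_{A_3}+3A_3)+(n_1+\cdots+n_{A_4}+2A_4).
\]
Grouping the loop counts into $H=m_1+\cdots+m_{A_1}+n_1+\cdots+n_{A_4}+\tilde m_1+\cdots+\tilde m_{A_3}+\tilde n_1+\cdots+\tilde n_{A_3}$ and the remaining integers into $2A_1+A_2+3A_3+2A_4$, the net shift equals $(A_1+A_2+A_3+A_4)\,x_1-(H+2A_1+A_2+3A_3+2A_4)\,(x_4-x_3)$, so adding the initial value $x_2$ yields exactly the stated expression.

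I expect no genuine obstacle here: the substance of the argument was already carried out in the per-route computations, and the corollary is essentially an assembly step. The only point requiring care is the justification that each route leaves $x_1$, $x_3$ and $x_4$ invariant, so that the induced maps are honest translations of a single real variable and hence commute, together with tracking the loop counts $m_i$, $n_i$, $\tilde m_i$, $\tilde n_i$ correctly in the summation. Since order-independence has already been observed, I would keep this final step short and purely computational.
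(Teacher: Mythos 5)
Your proposal is correct and follows essentially the same path as the paper: the paper also assembles the corollary from the displayed per-route formulas (each route fixes $x_1,x_3,x_4$ and adds $x_1-\delta_i\,(x_4-x_3)$ to the second coordinate), computes the cumulative effect of each route type by a mechanical induction, and invokes order-independence to sum the four contributions. Your framing via commuting translations of $\mathbb{R}$ is just a cleaner way of justifying the order-independence that the paper asserts by example; the bookkeeping and the resulting formula coincide.
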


Therefore, if a periodic sequence follows the pattern of routes of Corollary~\ref{Cor_routes}, then the second term of the tuple must equal $x_2$ and we get the following condition
\begin{equation} \label{x1_rutas}
x_1 = \frac{H + 2A_1 + 2A_4 + A_2 + 3A_3}{(A_1 + A_2 + A_3 + A_4)}\cdot (x_4 - x_3).
\end{equation}

So if we consider the length of the routes, since the initial conditions verify Condition U, then the period of the sequence will be determined by $N$, where $N$ is defined by the minimal concatenation of routes needed to achieve periodicity ($A_j$ routes $R_j$, $j=1,2,3,4$):
\begin{eqnarray*}
 N &=& A_1\cdot |R_1| + A_2\cdot |R_2| + A_3\cdot |R_3| + A_4\cdot |R_4| \\
 &=& (A_1 + A_2 + A_3 + A_4) \cdot 10 + (3A_1 + 2 A_2 + 4A_3 + 3 A_4 + H)\cdot 11.
\end{eqnarray*}

We will call $A = A_1 + A_2 + A_3 + A_4$ and $B = 3A_1 + 2 A_2 + 4A_3 + 3 A_4 + H$. In this sense, $N = 10\cdot A + 11 \cdot B$ with $A,B \in \mathbb{N}$.

\subsubsection{The periods of sequences satisfying Condition U}

Now we will take into account the behaviour of the different routes $R_i$ to analyse a particular case of the initial conditions, $(x_1,x_2,x_3,x_4)$, verifying the inequalities of $C_4$. This study will allow us to determine a necessary condition for the periods of Equation (\ref{Eq:G4}).

Beforehand, we will homogenize the notation used to describe the routes $R_i$. Let us denote $$\delta_1 = m+2, \ \ \delta_2 = 1, \ \ \delta_3 = m+n+3, \ \ \delta_4 = n+2,$$ \noindent where $m$ and $n$ represented the number of loops that take place in $C_1$ and $C_3$ respectively. In this sense, the second term of the initial conditions will change by a route $R_i$ as 
\begin{equation} \label{tildex2}
    \tilde{x}_2 = x_1 + x_2 - \delta_i(x_4-x_3). 
\end{equation}
Moreover, it is easy to check that the length of the route $R_i$ will be $32 + 11\cdot (\delta_i-1)$, $i=1,2,3,4.$

\begin{proposition} \label{prop_gcd}
Let $p, q$ be natural numbers with $q \geq 2p + 1$ and $\mathrm{gcd}(p,q)=1$. Let $(x_1, x_2, x_3, x_4)$ be a tuple of real numbers verifying Condition U such that $x_1 = \frac{q-p}{p}(x_4-x_3)$ and $x_1 \geq x_4 \geq x_2 \geq x_3$ (case $C_4$). Then these initial conditions determine under Equation (\ref{Eq:G4}) a periodic sequence described by $p$ routes $R_i$. Furthermore, its period is $N = 10\cdot p + 11\cdot q$.
\end{proposition}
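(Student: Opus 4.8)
The plan is to reduce the whole orbit to a one–dimensional rotation. By Condition U the orbit that starts in $C_4$ is an unambiguous concatenation of routes $R_{i_1},R_{i_2},\dots$, and each route returns the tuple to $C_4$ leaving $x_1,x_3,x_4$ untouched and updating only the second coordinate via (\ref{tildex2}). I would therefore set $d=x_4-x_3$ and $\theta=x_1/d=\frac{q-p}{p}$ (note $d>0$, since $x_1=\theta d>0$), and track the orbit by the single scalar $v=(x_2-x_3)/d\in[0,1]$, which is legitimate precisely because $x_1,x_3,x_4$ are frozen along the motion.

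Next I would identify the induced map on $v$ as the rigid rotation $v\mapsto\{v+\theta\}$. Indeed, (\ref{tildex2}) sends $v$ to $v+\theta-\delta_{i_j}$; since the image tuple again lies in $C_4$, its second coordinate lies in $[x_3,x_4]$, so the new value of $v$ lies in $[0,1]$, and $\delta_{i_j}$ is the unique integer achieving this. Condition U guarantees that we never land on a boundary $v\in\frac1p\mathbb Z$ (which would produce an intersection case of Subsection~\ref{Sec_intersection}), so there is no ambiguity and the dynamics genuinely realizes $v_{k+1}=\{v_k+\theta\}$.

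The key step is the order of this rotation. Because $\gcd(q-p,p)=\gcd(q,p)=1$, the points $v_0,v_0+\theta,\dots,v_0+(p-1)\theta$ are pairwise distinct modulo $1$ while $v_0+p\theta\equiv v_0$. Hence $v$, and with it the full tuple $(x_1,x_2,x_3,x_4)$, first returns to its initial value after exactly $p$ routes; this establishes periodicity and that the sequence is described by $p$ routes. (Eventual periodicity would already suffice by Proposition~\ref{Prop_eventually}, but here the return is exact.)

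Finally I would read off the period. Over the closing block of $p$ routes the net real increment of $v$ vanishes, so $p\theta=\sum_{j=1}^p\delta_{i_j}$, i.e.\ $\sum_{j=1}^p\delta_{i_j}=q-p$. Since $|R_{i_j}|=32+11(\delta_{i_j}-1)$,
\[
N=\sum_{j=1}^p\bigl(32+11(\delta_{i_j}-1)\bigr)=21p+11\sum_{j=1}^p\delta_{i_j}=21p+11(q-p)=10p+11q.
\]
I expect the main obstacle to be the rigorous justification that the dynamics realizes the rotation: one must argue, using Condition U together with the tabulated route effects, that at each stage Equation~(\ref{Eq:G4}) actually takes the unique route returning $v$ to $[0,1]$. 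The hypothesis $q\ge 2p+1$ is exactly what makes this consistent, forcing $\theta>1$ so that each route subtracts at least one full copy of $d$ (hence every $\delta_{i_j}\ge1$) and, on average over the $p$ routes, $\sum\delta_{i_j}=q-p>p$, which is compatible with the admissible route indices $\delta_i$.
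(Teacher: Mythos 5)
Your proposal is correct and is essentially the paper's own argument in streamlined form: both proofs track only the normalized second coordinate $v=(x_2-x_3)/(x_4-x_3)$, show that each route acts on it as the rotation $v\mapsto\{v+\tfrac{q-p}{p}\}$, use $\gcd(q-p,p)=1$ to conclude that exactly $p$ routes are needed for the first return, and obtain $N=21p+11\sum_j\delta^j=10p+11q$ from the identity $\sum_j\delta^j=q-p$. The only difference is bookkeeping: you read the constraint $v'\in[0,1]$ directly off the fact that every route ends in $C_4$ (with Condition U excluding the boundary values $v'\in\{0,1\}$, which would be intersection tuples), whereas the paper reaches the same conclusion through the backward-iteration inequalities (\ref{Desigualdad1})--(\ref{Desigualdad3}), the resulting two-candidate update (\ref{Neq_x2}), and the counting argument $\alpha=r$.
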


\begin{proof}

Firstly, observe that after $A$ routes, with $A \in \mathbb{N}$, the second term $x_2$ will become $Ax_1+x_2 - \sum_{j=1}^A\delta^j(x_4 - x_3)$, where $\delta^j$ denotes the $\delta_i$ associated to the $j$-th route of the cycle, with $j = 1, \ldots, A$.  Then periodicity will take place if $Ax_1 = \sum_{j=1}^A \delta^j(x_4-x_3)$. We use $x_1 = \frac{q-p}{p}(x_4-x_3)$ by hypothesis and that $x_4 > x_3$ (in the contrary, $x_4 = x_2 = x_3$ and we will have an $11$-cycle). In this sense, we obtain $A \cdot \frac{q-p}{p} = \sum_{j=1}^A \delta^j.$ However, $\sum_{j=1}^A \delta^j$ is a natural number, while $\frac{q-p}{p}$ is not, since $\mathrm{gcd}(p,q) = 1$. Thus, if the initial conditions generate a periodic sequence, then $A =\dot{p}$. Indeed, we will see that $A = p$. But, previously, we need to guarantee that, in fact, the sequence is periodic.

Next, we focus again on the evolution of the initial conditions under the routes $R_i$. Notice that, independently of the route, the conditions of Case $C_4$ force the second term to be less or equal to the fourth, that is, $$x_1 + x_2 - \delta_i(x_4-x_3) \leq x_4.$$

Equivalently, we substitute $x_1 = \frac{q-p}{p}(x_4-x_3)$, add and subtract the third term $x_3$: $\frac{q-p}{p}(x_4-x_3) + x_2 - (\delta_i+1)(x_4-x_3) - x_3 \leq 0.$ Now we divide by $x_4 - x_3$ (remember $x_4 > x_3$) and denote $\mu = \frac{x_2-x_3}{x_4-x_3} \in [0,1]$. Hence, for every route $R_i$, we have 
\begin{equation} \label{Desigualdad1}
    \frac{q-p}{p} + \mu - 1 \leq \delta_i.
\end{equation}

Now we will analyse each route in order to find an upper bound for the integer $\delta_i$, $i=1,\ldots,4$. Firstly, we will consider the routes $R_1$ and $R_3$. In both cases, if we do 11 iterations backwards from the final tuple $(x_1, x_1 + x_2 - \delta_i(x_4-x_3), x_3, x_4)$, we will get $(x_1, x_1 + x_2 - (\delta_i-1)(x_4-x_3), x_3, x_4)$ that verifies the case $C_1$ (take into account that the last step in routes $R_1$ and $R_3$ is $C_1 \rightarrow C_4$, and that $(x_n)$ verifies Condition U). Thus, the inequalities that define $C_1$ must hold. In particular, the second term must be greater or equal to the fourth, which implies $$x_1 + x_2 - (\delta_i-1)(x_4-x_3) \geq x_4.$$

We proceed analogously to Inequality (\ref{Desigualdad1}) and obtain
$$\frac{q-p}{p}(x_4-x_3) + x_2 - (\delta_i-1)(x_4-x_3)-x_4 \geq 0,$$
$$\frac{q-p}{p}(x_4-x_3) - \delta_i(x_4-x_3) + x_2 - x_3 \geq 0. $$

We divide by $x_4-x_3$ and for the routes $R_1$ and $R_3$ we have 
\begin{equation} \label{Desigualdad2}
    \delta_i \leq \frac{q-p}{p} + \mu.
\end{equation}

Also, notice that Inequality (\ref{Desigualdad2}) is valid for $R_2$, because in that case $\delta_i = \delta_2 = 1$ and $\frac{q-p}{p} \geq 1$, since $q \geq 2p+1$. Finally, we consider the route $R_4$. If we do 10 iterations backwards (now, the last step in $R_4$ is $C_2 \rightarrow C_4$) from the final tuple $(x_1, x_1 + x_2 - \delta_i(x_4-x_3), x_3, x_4)$, we will obtain the terms $(x_1, x_3, x_4, x_2 - (\delta_i-1)(x_4-x_3))$ which verify the conditions of $C_2$. We do again 11 iterations backwards to get the tuple $(x_1, x_3, x_4, x_2 - (\delta_i-2)(x_4-x_3))$ that verify $C_3$. In this sense, the inequalities that define $C_3$ force the sum of the second and fourth term to be greater than or equal to the third. So, $x_2 - (\delta_i -2)(x_4 - x_3) + x_3 \geq x_4,$ from where we can deduce $$x_2 - (\delta_i -1)(x_4 - x_3) \geq 0 \ \ \ \text{and} \ \ \ x_2 + x_4 - x_3 \geq \delta_i(x_4-x_3).$$

However, as $x_1 \geq x_2$, we get $x_1 + x_4 - x_3 \geq \delta_i(x_4-x_3)$ and, therefore, $$\frac{q-p}{p}(x_4-x_3) + x_4 - x_3 \geq \delta_i (x_4 - x_3).$$

So, we achieve
\begin{equation} \label{Desigualdad3}
    \frac{q-p}{p} + 1 \geq \delta_i.
\end{equation}

To sum up, for every route $R_i$, we can synthesise Inequalities (\ref{Desigualdad1}), (\ref{Desigualdad2}) and (\ref{Desigualdad3}) as
$$\frac{q-p}{p} + \mu - 1 \leq \delta_i \leq \frac{q-p}{p} + 1.$$

Since $\delta_i$ is a natural number, it only admits two possible values, that are $\delta_i = \left[\frac{q-p}{p}\right] \ \ \ \text{or} \ \ \ \delta_i = \left[\frac{q-p}{p}\right] + 1,$ where $[\cdot]$ represents the integer part of a number. Thus, after a route $R_i$, taking into account (\ref{tildex2}) and the condition $x_1 = \frac{q-p}{p} (x_4 - x_3)$, the second term $x_2$ will become either in 
\begin{equation} \label{new_x2}
\tilde{x}_2 = x_2 + \left( \frac{q-p}{p} - \left[\frac{q-p}{p} \right] \right)(x_4-x_3),
\end{equation}

or either in
\begin{equation} \label{new_x2_bis}
\tilde{x}_2 = x_2 + \left( \frac{q-p}{p} - \left[\frac{q-p}{p} \right] - 1 \right)(x_4-x_3).
\end{equation}

It should be emphasized that $\frac{q-p}{p} - \left[\frac{q-p}{p} \right] = \left\{ \frac{q-p}{p} \right\}$, where $\{\cdot\}$ denotes the fractional part of a number. Hence $\left\{ \frac{q-p}{p} \right\} = \frac{r}{p}$, where $r$ is the rest of the division of $q-p$ by $p$. So, we can reduce expressions (\ref{new_x2}) and (\ref{new_x2_bis}) into \begin{equation} \label{newlabel}
    \tilde{x}_2 = x_2 + \frac{r}{p}(x_4-x_3) \ \ \ \text{or} \ \ \ \tilde{x}_2 = x_2 - \left(1 - \frac{r}{p} \right)(x_4 - x_3).
\end{equation}

But, after a route, which is the factor that determines if the second term is defined by one expression or the other? The key is that the new tuple $(x_1, \tilde{x}_2, x_3, x_4)$ must verify the conditions of $C_4$. In particular, $x_4 \geq \tilde{x}_2 \geq x_3$. Let us see that these inequalities are always verified by one and only one of the two possible options, i.e., is not possible to fulfill at the same time the inequalities $x_4 \geq \tilde{x}_2 \geq x_3$ by the two values $\tilde{x}_2$ appearing in (\ref{newlabel}).
\begin{itemize}
    \item If $x_4\geq x_2 + \frac{r}{p}(x_4-x_3)$, then $x_3 \geq x_2 - \left(1-\frac{r}{p}\right)(x_4-x_3)$. So the second option in (\ref{newlabel}) does not verify the conditions characterising $C_4$. Notice that $x_2 + \frac{r}{p}(x_4-x_3) \geq x_3$ trivially. 
    \item If $x_2 + \frac{r}{p}(x_4-x_3) > x_4$ and, thus, the first expression in (\ref{newlabel}) does not hold conditions in $C_4$, then we have $x_2 + \frac{r}{p}(x_4-x_3) > x_4 - x_3 + x_3;$ and $x_2 - \left(1-\frac{r}{p}\right)(x_4 - x_3) > x_3$. So the second option satisfies the conditions of $C_4$, since $x_4 \geq x_2 - \left(1-\frac{r}{p}\right)(x_4 - x_3)$ trivially. 
\end{itemize}

Definitely, after one route $R_i$, we find 
\begin{equation} \label{Neq_x2}
    \tilde{x}_2 = \left\{ \begin{matrix} x_2 + \frac{r}{p}\cdot d & \text{if} \ x_2 + \frac{r}{p}\cdot d \leq x_4, \\ x_2 + \frac{r}{p}\cdot d - d &  \text{otherwise,} \end{matrix} \right. 
\end{equation}

\noindent where $d = x_4 - x_3$. So, after $p$ routes, the second term $\tilde{x}_2$ will be $x_2 + p\cdot \frac{r}{p} \cdot d - \alpha \cdot d,$ with $\alpha \in \mathbb{Z}$ and $0\leq \alpha \leq p.$ Notice that $\alpha$ represents the number of times that $\tilde{x}_2$ takes the second value of (\ref{Neq_x2}) after one route. Furthermore, it must be hold that
$$x_3 \leq x_2 + r\cdot d - \alpha \cdot d \leq x_4,$$
$$\alpha \cdot d \leq x_2 - x_3 + r \cdot d \leq (\alpha + 1) \cdot d.$$

Dividing by $d$,
$$\alpha \leq \frac{x_2-x_3}{d} + r \leq \alpha + 1, $$

\noindent with $0\leq \frac{x_2-x_3}{d} \leq 1$. Therefore, as $\alpha$ and $r$ are integers, we deduce that $\alpha = r$, except when $x_2 = x_3$, where apart from $\alpha = r$, it can also be hold $\alpha = r-1$. However, if $\alpha = r-1$, then $x_2=x_3$ and the initial conditions would verify simultaneously cases $C_4$ and $C_5$ in contradiction with Condition U. Thus, $\alpha = r$.

If $\alpha = r$, after $p$ routes, the second term will verify $x_2 + p \cdot \frac{r}{p} \cdot d - rd = x_2.$ Thus, after $p$ routes, the initial terms $(x_1,x_2,x_3,x_4)$ with $x_1 \geq x_4 \geq x_2 \geq x_3$, $x_1 = \frac{q-p}{p}(x_4-x_3)$ where $q \geq 2p+1$ and $\mathrm{gcd}(p,q) = 1$ generate a periodic sequence. Moreover, as we have seen that the number of routes needed to achieve periodicity must be $A = \dot{p}$, we conclude $A = p$.
   
Finally, we will see the period of the sequence. For that, remember that the length of each route $R_i$ is $32 + 11 \cdot (\delta_i-1)$. Therefore, as we achieve periodicity after $p$ routes $R_i$, we obtain that the period is 
$$N = \sum_{j=1}^p(32+11\cdot(\delta^j-1)) = 21p + 11 \sum_{j=1}^p \delta^j,$$

\noindent where $\delta^j$ denoted the $\delta_i$ associated to the $j$-th route of the cycle. This sum is easy to compute, since after $A = p$ routes, the second term will be $p\cdot x_1 + x_2 - \sum_{j=1}^p\delta^j(x_4-x_3)$ and if we equal to $x_2$ (we have periodicity), $p\cdot x_1 = \sum_{j=1}^p\delta^j(x_4-x_3).$ So, $$p\cdot \frac{q-p}{p}(x_4-x_3) = \sum_{j=1}^p\delta^j(x_4-x_3) \ \ \ \text{and} \ \ \ \sum_{j=1}^p\delta^j = q - p.$$

Now we substitute in the previous expression of the period and we obtain \vspace{1mm} \newline $N = 21\cdot p + 11 \sum_{j=1}^p \delta^j = 21\cdot p + 11\cdot (q-p) = 10\cdot p + 11\cdot q.$
\end{proof}

This description allows us to give a necessary condition for a certain number to be in the set of periods when Condition U is satisfied and the period is attained by a concatenation of routes. 

\begin{proposition} \label{nec}
Let $N$ be a natural number. If $N$ is the prime period of a periodic sequence described by routes $R_i$ and whose initial conditions verify Condition U, then $N$ admits a decomposition $N = 10\cdot a + 11\cdot b$ with $a, b \in \mathbb{N}$ such that $\gcd(a,b) = 1$ and $b \geq 2a + 1$.
\end{proposition}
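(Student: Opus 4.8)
The plan is to read off the closing identity from Corollary~\ref{Cor_routes}, recognise the resulting ratio as an instance governed by Proposition~\ref{prop_gcd}, and use the standing hypothesis $N\geq 12$ to discard the one degenerate configuration. Assume, as permitted, that the cycle starts in $C_4$ and is realised by $A_1,A_2,A_3,A_4$ routes of types $R_1,\dots,R_4$; put $A=A_1+A_2+A_3+A_4$. Recall that a single route $R_i$ fixes $x_1,x_3,x_4$ and sends $x_2$ to $x_1+x_2-\delta_i(x_4-x_3)$, so after the whole cycle the second term is $Ax_1+x_2-K(x_4-x_3)$ with $K=\sum_{j}\delta^{j}=H+2A_1+A_2+3A_3+2A_4$. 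Since $x_4>x_3$ (otherwise the orbit is an $11$-cycle or the equilibrium), closing the cycle forces $\frac{x_1}{x_4-x_3}=\frac{K}{A}$. Comparing $K$ with $B=3A_1+2A_2+4A_3+3A_4+H$ gives $B=A+K$, whence $N=10A+11B=21A+11K$.

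First I would reduce the ratio to lowest terms, $\frac{K}{A}=\frac{k}{a}$ with $\gcd(a,k)=1$. The heart of the argument, and the main obstacle, is the strict inequality $K>A$, i.e. $k\geq a+1$, for this is precisely what will yield $b\geq 2a+1$. Because $\delta_2=1$ while $\delta_1,\delta_3,\delta_4\geq 2$, we always have $K\geq A$, with equality exactly when every route is $R_2$. I would exclude this degenerate case by hand: if all routes are $R_2$ then $x_1=x_4-x_3$, which together with the $C_4$ inequalities $x_1\geq x_4\geq x_2\geq x_3\geq 0$ forces $x_3=0$ and $x_1=x_4$. The tuple $(x_1,x_2,0,x_1)$ then evolves, through the steps $C_4\to C_5\to C_2$, into $(x_1,0,x_1,x_2)$, which by Proposition~\ref{P:periodo8} is an $8$-cycle; this contradicts the standing assumption that the prime period satisfies $N\geq 12$. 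Hence $K>A$ and $k\geq a+1$.

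It remains to identify $N$. Set $p=a$ and $q=a+k$; then $\gcd(p,q)=\gcd(a,k)=1$, $q\geq 2a+1$, and $x_1=\frac{k}{a}(x_4-x_3)=\frac{q-p}{p}(x_4-x_3)$, so our tuple satisfies verbatim the hypotheses of Proposition~\ref{prop_gcd} (it lies in $C_4$, obeys Condition~U, and has the prescribed ratio). That proposition therefore applies directly and gives period $N=10p+11q=10a+11q$. Taking $b=q$ furnishes the decomposition $N=10\cdot a+11\cdot b$ with $\gcd(a,b)=\gcd(a,a+k)=1$ and $b=a+k\geq 2a+1$, which is exactly the claim. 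The only subtlety to watch is that Proposition~\ref{prop_gcd} yields the period for every admissible $x_2$, so no separate ``period depends only on the ratio'' step is needed once the ratio and the conditions $k\geq a+1$, $\gcd(a,k)=1$ have been secured.
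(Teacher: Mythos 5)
Your proof is correct and rests on the same two pillars as the paper's own argument: the closing identity coming from Corollary~\ref{Cor_routes} (equivalently, (\ref{x1_rutas})) together with the exclusion of an all-$R_2$ cycle (which collapses to an $8$-cycle or the equilibrium, contradicting $N\geq 12$), and Proposition~\ref{prop_gcd}. The only difference is organizational: you reduce the ratio $K/A$ to lowest terms $k/a$ and invoke Proposition~\ref{prop_gcd} \emph{directly} to identify the prime period as $10a+11(a+k)$, whereas the paper takes the coefficients from the raw route counts, $a=A_1+A_2+A_3+A_4$ and $b=3A_1+2A_2+4A_3+3A_4+H$, and then establishes $\gcd(a,b)=1$ \emph{by contradiction} with the minimality of the prime period — the same ingredients, packaged contrapositively rather than positively.
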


\begin{proof}

Let $(x_n)$ be a periodic sequence of Equation (\ref{Eq:G4}) described by the routes $R_i$ where $x_1 = \max\{x_n: n\geq 1\}$. Let $N$ be its prime period. By the previous analysis (see Corollary \ref{Cor_routes} and (\ref{x1_rutas})), $N$ will admit a decomposition of the form $N = 10\cdot a + 11\cdot b$, with $a,b \in \mathbb{N}$. Moreover, the coefficients $a$ and $b$ are equal to $A_1 + A_2 + A_3 + A_4$ and $3A_1 + 2 A_2 + 4A_3 + 3 A_4 + H$, respectively, where $A_i \ (i=1,\ldots,4)$ and $H$ are the ones described in (\ref{x1_rutas}). To finish we need to prove that $\gcd(a,b) = 1$ and $b \geq 2a + 1$.

Notice that $b = 2a + A_1 + A_4 + 2A_3 + H$. We claim that $A_1 + A_4 + 2A_3 + H \geq 1$. As $A_i \geq 0$ for $i=1,\ldots,4$, and $H \geq 0$, if $A_1 + A_4 + 2A_3 + H = 0$, it means that the cycle is formed by $A_2$ routes of $R_2$, but that is not possible. Indeed, due to the description that has been made about the routes, it is easy to check by induction that the initial conditions will become $(x_1, A_2 \cdot x_1 + x_2 + A_2 \cdot (x_3-x_4), x_3, x_4).$ Then, to have periodicity, the second term must be $x_2$, which yields to $x_1 + x_3 = x_4$. As $x_1$ is the maximum and the initial tuple is nonnegative, then $x_3 = 0$ and $x_1=x_4$. Notice that it implies the equilibrium or an 8-cycle. Thus, $A_1 + A_4 + 2A_3 + H \geq 1$ which yields to $b \geq 2a + 1$.

On the other hand, given such an $N$, suppose that $\gcd(a,b)=d > 1$. Then there exist $p, q\in \mathbb{N}$ such that $a = d \cdot p$ and $b = d \cdot q$. As the sequence $(x_n)$ follows the routes $R_i$, by Equation (\ref{x1_rutas}) we know that $x_1 = \frac{b-a}{a}(x_4 - x_3)$, which means that the initial conditions that generate the sequence are $\left ( \frac{b-a}{a} (x_4 - x_3), x_2, x_3, x_4 \right )$. However, the fraction $\frac{b-a}{a}$ reduces to $\frac{q-p}{p}$, since $\frac{b-a}{a} = \frac{d\cdot q - d \cdot p}{d\cdot p} = \frac{q-p}{p}$. So, initial conditions becomes into $\left ( \frac{q-p}{p}(x_4 - x_3), x_2, x_3, x_4 \right ),$ with $x_4 \geq x_2 \geq x_3 \geq 0$. Now, $\mathrm{gcd}(p,q) = 1$ and $q\geq 2p+1$ (since $b\geq 2a+1$), so by Proposition \ref{prop_gcd}, the initial conditions $(\frac{q-p}{p}(x_4 - x_3), x_2, x_3, x_4)$ generate a periodic sequence whose period is $N_1 = 10\cdot p + 11\cdot q$. Thus, $N_1 < N$, since $d\neq 1$ and $d\cdot N_1 = 10\cdot p \cdot d + 11 \cdot q \cdot d = 10 \cdot a + 11 \cdot b = N$. Therefore, we have that $(x_n)$ is periodic with period $N_1 < N$, but we had supposed that $N$ was the prime period, so we have derived a contradiction. Then, $\mathrm{gcd}(a,b) = 1$ is a necessary condition for $N \in \mathrm{Per}(F_4)$.
\end{proof}

\subsection{Proof of the Main Theorem} \label{Sec_main}

Notice that once we have described the routes which determine Diagram \ref{Diagrama}, the controversial cases analysed in Subsection \ref{Sec_specific} and the intersection between cases $C_i$, we are able to describe precisely the set of period $\mathrm{Per}(F_4)$.

\begin{theorem} \label{Th_PerF4}
    Consider Equation (\ref{Eq:G4}) and let $\mathrm{Per}(F_4)$ be its set of periods. Then $$\mathrm{Per}(F_4) = \left\{1,8,11\right\}\bigcup \left\{10\cdot a + 11\cdot b \ | \gcd(a,b)=1, a\geq 1, b\geq 2a+1 \right\}.$$
\end{theorem}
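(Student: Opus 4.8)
The plan is to prove the two inclusions separately, assembling the machinery developed in the preceding subsections. Throughout I normalize a periodic solution so that $x_1 = \max\{x_n : n\geq 1\}$, which is legitimate by Propositions~\ref{P:maxNoNeg} and~\ref{P:Positivos}, and I freely use the scaling invariance of Proposition~\ref{Prop_scalar} together with the fact that eventual periodicity is genuine periodicity (Proposition~\ref{Prop_eventually}).

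For the inclusion $\supseteq$ (every listed number is actually realized as a prime period), the values $1$, $8$ and $11$ are produced by Propositions~\ref{Prop_equi}, \ref{P:periodo8} and~\ref{Prop_monotonic}, respectively. For a general $N = 10a + 11b$ with $\gcd(a,b)=1$, $a\geq 1$ and $b\geq 2a+1$, I would apply Proposition~\ref{N_controversial1} with the identification $q=a$ and $p=b-a$. Then $p > q > 0$ and $\gcd(p,q)=\gcd(b-a,a)=\gcd(b,a)=1$, so for any admissible $\bar{y}\in[q,p]$ the initial conditions $(p,\bar{y},0,q)$ generate a periodic sequence of period $(p+q)\cdot 11 + q\cdot 10 = 11b + 10a = N$. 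Hence $N\in\mathrm{Per}(F_4)$, and the inclusion $\supseteq$ follows.

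For the inclusion $\subseteq$ (every prime period lies in the set), let $(x_n)$ be periodic of prime period $N$. If $N\leq 11$, Proposition~\ref{P:1,8,11} gives at once $N\in\{1,8,11\}$. If $N\geq 12$, the solution is governed by Proposition~\ref{prop_diag}: after normalization the maximum recurs every ten or eleven iterations, and successive tuples traverse the five cases $C_1,\ldots,C_5$ along the oriented graph of Figure~\ref{Diagrama}. Periodicity must then be recovered in one of three mutually exhaustive ways. First, it may occur \emph{in the interior} of a transition block; Subsection~\ref{Sec_controversial} reduces every such configuration, through the equivalence $\sim$ of Proposition~\ref{prop_controversial} and the subsequent simplifications, to a controversial tuple $(x,y,0,z)$ or $(x,z,y,0)$, whose periods are exactly $10a+11b$ with $\gcd(a,b)=1$ and $b\geq 2a+1$ by Corollaries~\ref{Cor_xy0z} and~\ref{Cor_xzy0}. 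Second, the tuple may simultaneously satisfy the inequalities of more than one case; Subsection~\ref{Sec_intersection} enumerates all such intersection configurations and shows each collapses either to a period in $\{1,8,11\}$ or again to a controversial tuple with period of the same form. Third, the tuple satisfies Condition~U and describes a single unambiguous cycle built from the routes $R_1,\ldots,R_4$, whence Proposition~\ref{nec} yields $N = 10a+11b$ with $\gcd(a,b)=1$ and $b\geq 2a+1$. In every situation $N$ lies in the right-hand side, so the inclusion $\subseteq$ holds and, combined with the previous paragraph, proves the asserted equality.

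The main obstacle is justifying that this trichotomy is genuinely exhaustive for solutions of period at least twelve: one must guarantee that, as the normalized tuple moves through Figure~\ref{Diagrama}, periodicity can be regained \emph{only} in the interior of a transition (the controversial and intersection cases) or \emph{only} after a complete cycle of routes under Condition~U, with no escape route left unaccounted for. This is precisely what the block computations of Lemmas~\ref{Lemma_xy0z}, \ref{prop_xzy1} and~\ref{prop_xzy2}, together with Remarks~\ref{Remark_xy0z} and~\ref{Remark_xzy04}, secure by excluding mid-block periodicity outside the controversial tuples, while Proposition~\ref{Prop_loop} forbids a solution from looping forever inside $C_1$ or $C_3$. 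The delicate point is to weld these separate analyses into the clean dichotomy "controversial/intersection versus Condition~U" and to check that the identical period formula $N=10a+11b$ with $\gcd(a,b)=1$ and $b\geq 2a+1$ emerges in each branch; once this is in place, the theorem follows by merely combining the two inclusions.
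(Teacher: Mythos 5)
Your proposal is correct and follows essentially the same route as the paper's own proof: both inclusions are assembled from Proposition~\ref{P:1,8,11} for periods up to eleven, the trichotomy of controversial cases (Corollaries~\ref{Cor_xy0z}, \ref{Cor_xzy0}), intersection cases (Subsection~\ref{Sec_intersection}) and Condition~U (Proposition~\ref{nec}) for larger periods, and the realization of every admissible $N=10a+11b$ via Proposition~\ref{N_controversial1} with $q=a$, $p=b-a$, which is exactly the identification made in Remark~\ref{Remark_controv1}. Your closing paragraph on the exhaustiveness of the trichotomy correctly pinpoints the one place where the paper's argument also rests on the accumulated block computations rather than on a single self-contained statement.
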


\begin{proof}

Firstly, by Proposition \ref{P:1,8,11}, we get Per$(F_4) \cap [1,11] = \{1,8,11\}$. 
Secondly, the detailed description of Per$(F_4)$ for periods $N>11$, has been developed by the deep analysis of the controversial cases, the intersection between cases and the initial conditions satisfying Condition U. See Subsections \ref{Sec_controversial}, \ref{Sec_intersection} and \ref{Sec_diagram}, respectively.

For the controversial cases, the study of the periodic sequences generated by the initial conditions $(x,y,0,z)$ and $(x,z,y,0)$ with $x\geq y \geq z > 0$ is established by Corollaries \ref{Cor_xy0z} and \ref{Cor_xzy0}, respectively. On the other hand, the intersection between cases is developed in Subsection \ref{Sec_intersection} and we have found that it can be reduced to the casuistic of controversial cases. Finally, Proposition \ref{nec} deals with the periods of the periodic sequences generated by initial conditions holding Condition U.

Next, by Proposition \ref{prop_gcd} or Remarks \ref{Remark_controv1} and \ref{Remark_controv2}, jointly with Proposition \ref{P:1,8,11}, we conclude that Per$(F_4)$ is exactly \vspace{3mm} \newline $\left\{1,8,11\right\}\bigcup \left\{10\cdot a + 11\cdot b \ | \gcd(a,b)=1, a\geq 1, b\geq 2a+1 \right\}.$
\end{proof}

Observe that apart from describing exactly the set $\mathrm{Per}(F_4)$, we have developed an easy way to construct cycles of any period in $\mathrm{Per}(F_4)$, since once we have the decomposition of the period $N = 10\cdot a + 11\cdot b$ with $\gcd(a,b)=1$ and $b\geq 2a+1$, Proposition \ref{prop_gcd} or Propositions \ref{N_controversial1} and 
\ref{N_controversial2} give us information in order to settle the suitable initial conditions. 

\section{Evaluating the maximum of $\mathbb N\setminus\mathrm{Per}(F_4)$} \label{S:Maximum}
Once we have obtained the characterization of the set of periods as suitable combinations of multiples of $10$ and $11$,  a question that arises naturally is whether there exists or not a number $M$ such that every number greater than $M$ belongs to $\mathrm{Per}(F_4)$. We answer in affirmative and find such a number. To do it, previously we need some results on the prime numbers appearing in $\mathrm{Per}(F_4)$ as well as their multiples. Roughly speaking, we prove that almost all primes belong to $\mathrm{Per}(F_4)$ as well as their associate multiples. 

Beforehand, it is interesting to mention that if we had not had the restriction $b \geq 2a+1$, the answer would follow directly by the Diophantic Frobenius Problem. In concrete, the problem -also named coin problem in the literature- consists in finding the greatest number that can not be expressed as a linear combination with positive coefficients of a set $(a_1,\ldots,a_n)$ of natural numbers with $\gcd(a_1,\ldots,a_n) = 1$. In the particular case of two natural numbers, that greatest number is given by $a_1a_2 - a_1 - a_2$ (for more information, the reader is referred to \cite{Alfonsin}). In our case, $a_1=10$ and $a_2 = 11$, the greatest number that cannot be expressed as a linear combination of $10$ and $11$ is $89$. But as we have highlighted, the conditions $b \geq 2a+1$ and $\gcd(a,b)=1$ complicate the problem.

Let us denote the set of prime numbers by $\mathcal{P}$. In the following list, we present the first elements $p$ of $\mathcal{P}$, until $401$, and we encircle those admitting a decomposition $p=10\cdot a+11\cdot b$ with $\gcd(a,b)=1$ and $b\geq 2a+1$:

{\small{\noindent $2,3,5,7$, \circled{$11$}, $13,17,19,23,29,31,37,41,$ \circled{$43$}, $47, 53, 59, 61, 67, 71, 73, 79, 83,$ $89,$ \circled{$97$}, 
$101, 103,$ \circled{$107$}, \circled{$109$}, $113, 127,$ \circled{$131$}, $137,$ \circled{$139$}, $149,$ \circled{$151$}, $157,$ \circled{$163$}, $167$, \circled{$173$}, $179, 181, 191$, \circled{$193$}, \circled{$197$}, 
$199, 211, 223,$ \circled{$227$}, \circled{$229$}, $233,$ \circled{$239$}, \circled{$241$}, \circled{$251$}, \circled{$257$}, \circled{$263$}, 
\circled{$269$}, \circled{$271$}, $277,$ \circled{$281$}, \circled{$283$}, \circled{$293$}, \circled{$307$}, \circled{$311$}, \circled{$313$}, \circled{$317$}, \circled{$331$}, \circled{$337$}, \circled{$347$}, 
\circled{$349$}, \circled{$353$}, \circled{$359$}, \circled{$367$}, \circled{$373$}, \circled{$379$}, \circled{$383$}, 
\circled{$389$}, \circled{$397$}, \circled{$401$}.}}

For instance, $397\in\mathrm{Per}(F_4)$ since $397=297+100=11\cdot 27 + 10\cdot 10$, whereas $277\notin\mathrm{Per}(F_4)$ as the decompositions $277=187+90=11\cdot 17 + 10\cdot 9$ and $277=77+200=11\cdot 7 + 10\cdot 20$ are not allowed, because $b< 2a+1$ in both cases. We observe that we have encircled all the primes greater than $277$. In fact, this observation can be confirmed by Proposition \ref{P:losprimos}. Due to length reasons, the proof of the following results will be omitted. The interested reader can found them in \cite{LiNi}. 
\begin{proposition}\label{P:losprimos}
If $p\in\mathcal{P}$, with $p\geq 281$,  then $p\in\mathrm{Per}(F_4).$
\end{proposition}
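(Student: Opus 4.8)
The plan is to invoke Theorem~\ref{Th_PerF4}: since a prime $p \ge 281$ is none of $1$, $8$, $11$, the membership $p \in \mathrm{Per}(F_4)$ is equivalent to exhibiting a single decomposition $p = 10a + 11b$ with $a \ge 1$, $\gcd(a,b)=1$ and $b \ge 2a+1$. The first observation I would make — and the one that does most of the work — is that for a prime the coprimality condition comes for free. Indeed, if $p = 10a + 11b$ with $1 \le a$, then $d := \gcd(a,b)$ divides $10a + 11b = p$, so $d \in \{1,p\}$; but $d \mid a$ forces $d \le a$, and (as will be checked below) $a < p$, ruling out $d = p$. Hence every admissible-size decomposition of a prime is automatically coprime, and I am left only with an existence problem for $a$ subject to a congruence and an inequality.

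Next I would translate the two remaining constraints into arithmetic conditions on $a$ alone. Writing $b = (p - 10a)/11$, the requirement that $b$ be a positive integer is the congruence $10a \equiv p \pmod{11}$, i.e.\ $a \equiv -p \pmod{11}$, while the inequality $b \ge 2a+1$ rearranges, via $p = 10a + 11b \ge 10a + 11(2a+1)$, to $a \le (p-11)/32$. So the whole statement reduces to the following: there is an integer $a$ with $1 \le a \le (p-11)/32$ lying in the residue class $-p \pmod{11}$. Because $p \ge 281 > 11$ is prime, $11 \nmid p$, so the least positive representative $a_0$ of $-p \pmod{11}$ satisfies $1 \le a_0 \le 10$; in particular $a_0 < p$, which retroactively justifies the coprimality step.

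Finally I would settle the existence question by a size comparison. The candidate $a = a_0$ works as soon as $32 a_0 + 11 \le p$, and since $a_0 \le 10$ this certainly holds whenever $p \ge 32\cdot 10 + 11 = 331$. For the finitely many primes in the window $281 \le p \le 330$, namely $281, 283, 293, 307, 311, 313, 317$, I would simply record the least residues $a_0$ (respectively $5,3,4,1,8,6,2$) and verify $32 a_0 + 11 \le p$ by hand, which holds in each case. The only delicate point is precisely this boundary analysis: the minimal representative $a_0$ can be as large as $10$, so the argument is uniform only past $p = 331$ and the threshold $281$ in the statement must be matched against the small-prime checks. The sharpness of $281$ is explained by the very same computation, since the preceding prime $277$ has $a_0 = 9$ while $(277-11)/32 < 9$, so no admissible $a$ exists and $277 \notin \mathrm{Per}(F_4)$.
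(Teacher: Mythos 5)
Your proposal is correct, but a direct comparison with the paper is not possible here: the paper explicitly omits the proof of Proposition~\ref{P:losprimos} and defers it to the companion preprint \cite{LiNi}, so your argument is in effect supplying what the paper leaves out. Your route is the natural one given Theorem~\ref{Th_PerF4}, and every step checks out: since $p\geq 281$ excludes $\{1,8,11\}$, membership reduces to finding $p=10a+11b$ with $a\geq 1$, $b\geq 2a+1$, $\gcd(a,b)=1$; coprimality is indeed automatic because $d=\gcd(a,b)$ divides $p$ while $d\leq a<p$; the integrality of $b=(p-10a)/11$ is exactly the congruence $a\equiv -p \pmod{11}$ (using $10\equiv -1 \pmod{11}$), and $b\geq 2a+1$ is exactly $a\leq (p-11)/32$. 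Since $11\nmid p$, the least positive residue $a_0\in\{1,\dots,10\}$ exists, and $32a_0+11\leq p$ holds uniformly for $p\geq 331$; I verified your residues $a_0=5,3,4,1,8,6,2$ for the seven primes $281,283,293,307,311,313,317$ in the window, and each satisfies $32a_0+11\leq p$ (giving, e.g., $281=10\cdot 5+11\cdot 21$ with $\gcd(5,21)=1$ and $21\geq 11$). Your closing remark on $277$ (residue $a_0=9$, bound $(277-11)/32<9$) is also consistent with the paper's own observation that $277\notin\mathrm{Per}(F_4)$, and it explains why the threshold $281$ is sharp. Two small presentational points: the equivalence (rather than mere sufficiency) of the decomposition criterion is only needed for that sharpness remark, and your coprimality argument should note that the case $b=0$ cannot occur (it would force $10\mid p$), though this is immediate since you always have $b\geq 2a+1\geq 3$.
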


We collect some information about the formation of elements of $\mathrm{Per}(F_4)$ once we know that a prime number $p$ belongs to the set of periods. 
\begin{proposition}\label{P:formacionP}
Let $p\in\mathcal{P}\cap \mathrm{Per}(F_4)$, with $p\geq 43.$ Assume that $p=10a+11b$, with $\gcd(a,b)=1$, $b\geq 2a+1$, $a\geq 1$.  Then:
\begin{itemize}
\item[(a)] $pq\in\mathrm{Per}(F_4)$ for all $q\geq 1$ with $\gcd(p,q)=1$ and $aq\geq 12, (b-2a)q\geq 33$. In particular,  $pq\in\mathrm{Per}(F_4)$ for all $q\geq 33$ with $\gcd(p,q)=1$. 
\item[(b)] $p^kq\in\mathrm{Per}(F_4)$ for all $k\geq 2$ and for all $q\geq 1$ with $\gcd(p,q)=1$. 
\end{itemize}
\end{proposition}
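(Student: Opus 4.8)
The plan is to reduce the whole statement to the arithmetic characterization of $\mathrm{Per}(F_4)$ supplied by Theorem~\ref{Th_PerF4}: for $N>11$ one has $N\in\mathrm{Per}(F_4)$ exactly when $N=10A+11B$ for some integers $A\ge 1$, $B\ge 2A+1$ with $\gcd(A,B)=1$. Thus, writing $p=10a+11b$, both parts become a purely number-theoretic task: exhibit one admissible decomposition of $N=pq$ (resp. $N=p^kq$). The naive choice $pq=10(aq)+11(bq)$ satisfies the two inequalities but fails coprimality, since $\gcd(aq,bq)=q$; so the heart of the argument is to perturb it without destroying the inequalities.

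First I would record the one-parameter family of decompositions: because $10\cdot 11-11\cdot 10=0$, for every integer $t$ the pair $A=aq+11t$, $B=bq-10t$ still satisfies $10A+11B=pq$. The key step is then a lemma describing how the common factor moves along this family. Since $\gcd(A,B)\mid 10A+11B=pq$, any prime dividing $\gcd(A,B)$ is either $p$ or a divisor of $q$. A short case check (handling $2,5,11$ separately) shows that for a prime $r\mid q$ one has $r\mid\gcd(A,B)$ iff $r\mid t$, whereas $p\mid\gcd(A,B)$ iff $p\mid A$, i.e. iff $t$ lies in one forbidden residue class modulo $p$. Hence $\gcd(A,B)=1$ holds precisely when $\gcd(t,q)=1$ and $t$ avoids that single class. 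The admissibility inequalities translate into a window for $t$: the condition $A\ge 1$ reads $t\ge (1-aq)/11$, and $B\ge 2A+1$ reads $t\le (q(b-2a)-1)/32$.

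With this in hand, part (a) follows by testing the two shifts $t=1$ and $t=-1$, both of which are automatically coprime to $q$. The only thing left to check is the mod-$p$ condition; since $p\ge 43$ is odd, $1\not\equiv -1\pmod p$, so $t=1$ and $t=-1$ cannot both fall in the forbidden class, and at least one of them gives $\gcd(A,B)=1$. The hypothesis $(b-2a)q\ge 33$ is exactly what renders $t=1$ admissible (it yields $B\ge 2A+1$), while $aq\ge 12$ is exactly what renders $t=-1$ admissible (it yields $A=aq-11\ge 1$); since we cannot know in advance which of the two is needed, having both thresholds available is what makes the constants $12$ and $33$ appear. The ``in particular'' clause is then immediate, as $q\ge 33$ together with $a\ge 1$ and $b-2a\ge 1$ forces both size conditions. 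For part (b) I would run the same machinery on the base decomposition $p^kq=10(ap^{k-1}q)+11(bp^{k-1}q)$ with $A=ap^{k-1}q+11t$, $B=bp^{k-1}q-10t$; the gcd lemma now reads $\gcd(A,B)=1\iff\gcd(t,pq)=1$. Here the single shift $t=1$ already works for every $q\ge 1$: it is automatically coprime to $pq$, and because $k\ge 2$ gives $p^{k-1}\ge p\ge 43$ while $b-2a\ge 1$, the quantities $(b-2a)p^{k-1}q\ge 43$ and $ap^{k-1}q\ge 43$ secure $B\ge 2A+1$ and $A\ge 1$ with no size hypothesis on $q$ — which is precisely the asymmetry between (a) and (b).

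The main obstacle I expect is the gcd lemma together with its consequence in part (a): a single shift may land in the forbidden class modulo $p$, so one must have the fallback $t=-1$ available, and the delicate point is verifying that the two explicit thresholds $12$ and $33$ are exactly the admissibility bounds for $t=-1$ and $t=+1$ respectively. Everything else is routine verification that the chosen $(A,B)$ satisfies all three requirements of Theorem~\ref{Th_PerF4}.
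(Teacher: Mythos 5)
Your proposal is correct, but a caveat on the comparison: this paper does not actually contain a proof of Proposition~\ref{P:formacionP} --- it explicitly defers it (``Due to length reasons, the proof of the following results will be omitted'') to the reference \cite{LiNi}, so there is no in-paper argument to measure you against. Judged on its own, your argument is sound and complete in outline. The reduction to Theorem~\ref{Th_PerF4} is the right frame; the one-parameter family $A=aq+11t$, $B=bq-10t$ preserves $10A+11B=pq$; the gcd lemma is correct (for a prime $r\mid q$ one checks $A\equiv 11t$, $B\equiv -10t \pmod r$, and the exceptional residues $2,5,11$ still give $r\mid\gcd(A,B)\iff r\mid t$; for $r=p$, $p\mid A$ forces $p\mid 11B$ hence $p\mid B$ since $p\neq 11$, and $p\mid A$ pins $t$ to the single class $t\equiv -aq\cdot 11^{-1}\pmod p$). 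Your window computation reproduces the hypotheses exactly: $t=1$ is admissible iff $(b-2a)q\geq 33$, $t=-1$ is admissible iff $aq\geq 12$, and since $p\geq 43$ is odd, $1\not\equiv -1\pmod p$, so one of the two shifts escapes the forbidden class --- this is precisely why both numerical hypotheses are needed, and the ``in particular'' clause and part (b) (where $p\mid ap^{k-1}q$ collapses the forbidden class to $0\bmod p$, so $t=1$ always works) follow as you say. It is worth noting that your perturbation family is visibly the same device the paper uses in its Section~\ref{S:Maximum} computations (e.g.\ the decompositions $n=10\cdot(9+11r)+11\cdot(3j-10r)$ in the class $\mathcal{C}_9$), so your route is almost certainly the intended one in \cite{LiNi}, with the virtue of isolating the gcd lemma cleanly.
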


In the above results we have excluded $p=11 \in \mathrm{Per}(F_4)$. Next, we obtain the periods of $\mathrm{Per}(F_4)$ of the form $11^kq$, $k\geq 1$, and $\gcd(11,q)=1$. 
\begin{proposition}\label{P:para11}
Consider $11$, $a=0, b=1$. It holds:
\begin{itemize}
\item[(a)] $11^k q\in\mathrm{Per}(F_4)$ for all $k\geq 3$ and for all $q\geq 1$ with $\gcd(11,q)=1$.
\item[(b)] $11^2q\in\mathrm{Per}(F_4)$ for all $q\geq 3$ with $\gcd(11,q)=1$. Moreover, $11^2$ and $11^2\cdot 2$ do not belong to $\mathrm{Per}(F_4).$
\item[(c)] $11q\in\mathrm{Per}(F_4)$ whenever $\gcd(11,q)=1$ and 
$$q\in\{1\}\cup\left(\{q: q\geq 33\} \setminus\{43, 54, 76, 120\}\right).$$
\end{itemize}
\end{proposition}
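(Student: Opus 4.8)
The plan is to reduce everything to the arithmetic characterization of Theorem~\ref{Th_PerF4}: for $N>11$ one has $N\in\mathrm{Per}(F_4)$ if and only if $N=10a+11b$ with $\gcd(a,b)=1$, $a\geq 1$ and $b\geq 2a+1$. Since $N=11^kq$ with $k\geq 1$ is divisible by $11$, reducing $N=10a+11b$ modulo $11$ forces $10a\equiv 0$, hence $11\mid a$; writing $a=11c$ with $c\geq 1$ gives $b=11^{k-1}q-10c$. The inequality $b\geq 2a+1$ then collapses to the single bound $c\leq (11^{k-1}q-1)/32$, while $\gcd(a,b)=1$ unwinds, after distinguishing $k\geq 2$ from $k=1$, into: for $k\geq 2$, the requirements $11\nmid c$ and $\gcd(c,q)=1$; for $k=1$, the requirements $c\not\equiv -q\pmod{11}$ and $\gcd(c,q)=1$. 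Thus in each case membership of $N$ in $\mathrm{Per}(F_4)$ is equivalent to the existence of an integer $c\geq 1$ lying in a prescribed interval and meeting one coprimality and one congruence constraint, and the whole proposition becomes a search for such a $c$.

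For part (a), $k\geq 3$, I would simply take $c=1$: then $11\nmid c$, $\gcd(c,q)=1$, and the bound holds because $11^{k-1}q\geq 11^{2}=121\geq 33$. This yields the decomposition $a=11$, $b=11^{k-1}q-10$, valid for every admissible $q$. For part (b), $k=2$, the same choice $c=1$ works precisely when $11q\geq 33$, i.e.\ $q\geq 3$, giving $a=11$, $b=11q-10$; for $q=1$ and $q=2$ the admissible interval $[1,(11q-1)/32]$ has right endpoint strictly below $1$, hence is empty, so no valid $c$ exists and $11^{2},\,2\cdot 11^{2}\notin\mathrm{Per}(F_4)$. Both parts are therefore essentially immediate once the reduction is in place.

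The substantive part is (c), $k=1$, where $N=11q$ and one seeks $c$ with $1\leq c\leq (q-1)/32$, $\gcd(c,q)=1$ and $c\not\equiv -q\pmod{11}$. Here $q=1$ is the exceptional base value $N=11\in\{1,8,11\}$. For $q\geq 33$ the interval is nonempty, and if $q\not\equiv 10\pmod{11}$ the choice $c=1$ already works, its only obstruction being $1\equiv -q$, that is $q\equiv 10\pmod{11}$. When $q\equiv 10\pmod{11}$ the residue $c=1$ is forbidden, and the key remark is that $c=11$ is always coprime to $q$ (because $\gcd(11,q)=1$) and satisfies $c\not\equiv -q\pmod{11}$ (since $11\equiv 0$ while $-q\not\equiv 0$); moreover $c=11$ lies in the interval as soon as $q\geq 353$. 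Hence a valid $c$ exists for all $q\geq 353$, and it remains to treat the finitely many residues $q\equiv 10\pmod{11}$ with $33\leq q\leq 352$ by direct inspection, looking for the least admissible $c\geq 2$ (roughly, the smallest prime not dividing $q$ that is not congruent to $1$ modulo $11$) inside $[2,\lfloor(q-1)/32\rfloor]$. This finite check succeeds for every such $q$ except $q=43,54,76,120$, which is exactly why these four values are removed.

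The main obstacle is this finite verification: the bound $c\leq (q-1)/32$ is tight for small $q$, so when $q$ carries several small prime factors the candidates get squeezed out. Concretely, for $q=43$ (prime, interval $[1,1]$) and $q=54=2\cdot 3^{3}$ (interval $[1,1]$) only the forbidden $c=1$ remains; for $q=76=2^{2}\cdot 19$ (interval $[1,2]$) the surviving candidate $c=2$ fails coprimality; and for $q=120=2^{3}\cdot 3\cdot 5$ (interval $[1,3]$) every $c\in\{2,3\}$ shares a factor with $q$. The delicate point is to confirm that precisely these four values fail while every other $q\equiv 10\pmod{11}$ below $353$ admits a valid small $c$; this is the step that requires careful case analysis rather than a single closed formula.
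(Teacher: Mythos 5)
Your proposal is correct, but note first that the paper itself contains no proof to compare against: Proposition~\ref{P:para11} (like Propositions~\ref{P:losprimos} and~\ref{P:formacionP}) is explicitly deferred to \cite{LiNi}, so the only benchmark is the technique the authors display for the classes $\mathcal{C}_m$ in Section~\ref{S:Maximum}. Your argument is essentially that technique made systematic. Reducing $N=10a+11b$ modulo $11$ to force $a=11c$, so that $b=11^{k-1}q-10c$, is exactly the paper's re-decomposition trick $(a,b)\mapsto(a+11,b-10)$ visible in~(\ref{Eq:paraCaso9}), and Theorem~\ref{Th_PerF4} (legitimately usable here, since the proposition sits downstream of the main theorem and is purely arithmetic) converts membership into the existence of an admissible $c$. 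I checked the reduction in detail and it is sound: $b\geq 2a+1$ becomes $32c\leq 11^{k-1}q-1$, and since $\gcd(c,11^{k-1}q-10c)=\gcd(c,11^{k-1}q)$, the condition $\gcd(11c,b)=1$ unwinds to $11\nmid c$ and $\gcd(c,q)=1$ for $k\geq 2$, and to $c\not\equiv -q\pmod{11}$ and $\gcd(c,q)=1$ for $k=1$, exactly as you state. Parts (a) and (b) then follow from the choice $c=1$, and the emptiness of the interval $[1,(11q-1)/32]$ for $q=1,2$ gives the two negative statements for $11^2$ and $2\cdot 11^2$. For (c) your split is the right one: $c=1$ works when $q\geq 33$ and $q\not\equiv 10\pmod{11}$; $c=11$ works when $q\equiv 10\pmod{11}$ and $q\geq 353$; and the remaining $q\equiv 10\pmod{11}$ with $33\leq q\leq 352$ form a finite list. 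The only soft spot is that you assert this finite check rather than exhibit it; for the record it does close: for every such $q$ other than $43,54,76,120$, the smallest prime $c$ not dividing $q$ lies in $\{2,3,5\}$, satisfies $c\leq\lfloor(q-1)/32\rfloor$, and is automatically $\not\equiv 1\equiv -q\pmod{11}$ (the only multiple of $2\cdot 3\cdot 5$ in the list is the exception $q=120$), while your diagnoses of why $43$, $54$ (interval $[1,1]$), $76$ ($c=2$ fails coprimality) and $120$ ($c=2,3$ fail coprimality) are excluded are exactly right and match the announced exceptional set.
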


\begin{corollary}\label{C:cota11} It holds that $1320 = \max\{11n: n\geq 1, 11n\notin\mathrm{Per}(F_4)\} $.
\end{corollary}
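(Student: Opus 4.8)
The plan is to decompose every multiple of $11$ according to its $11$-adic valuation and then read off membership in $\mathrm{Per}(F_4)$ directly from Proposition~\ref{P:para11}. First I would write an arbitrary positive multiple of $11$ as $11n = 11^{k} q$ with $k \geq 1$ and $\gcd(11,q)=1$; in this normal form exactly one of the three parts (a), (b), (c) of Proposition~\ref{P:para11} governs the situation, according to whether $k \geq 3$, $k=2$, or $k=1$. Set $S := \{11n : n \geq 1,\ 11n \notin \mathrm{Per}(F_4)\}$; the goal is to prove $\max S = 1320$, which splits into showing that $1320$ is an upper bound for $S$ and that $1320 \in S$.

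For the upper bound I would assume $11n > 1320$ and verify $11n \in \mathrm{Per}(F_4)$ in each case. If $k \geq 3$, part (a) gives membership immediately. If $k = 2$, then $11n = 11^{2} q$ with $q > 1320/121 > 10$, hence in particular $q \geq 3$, and part (b) yields $11^{2}q \in \mathrm{Per}(F_4)$. If $k = 1$, then $11n = 11q$ with $q > 120$; thus $q \geq 33$ and $q \notin \{43, 54, 76, 120\}$, so part (c) gives $11q \in \mathrm{Per}(F_4)$. Consequently no element of $S$ exceeds $1320$.

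To show that the bound is attained, I note that $1320 = 11 \cdot 120$ with $\gcd(11,120)=1$, so here $k = 1$ and part (c) applies; since $120$ is one of the four exceptional values $\{43, 54, 76, 120\}$ explicitly excluded there, we conclude $1320 \notin \mathrm{Per}(F_4)$, i.e.\ $1320 \in S$. Combining the two steps gives $\max S = 1320$. For completeness one could even list $S$ in full: part (b) contributes $121$ and $242$, while part (c) contributes the numbers $11q$ with $q \in \{2,\dots,32\} \setminus \{11,22\}$ together with $q \in \{43, 54, 76, 120\}$, the largest of which is indeed $1320$.

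Since the argument is merely a careful case split driven by Proposition~\ref{P:para11}, there is no serious obstacle. The only point demanding attention is the bookkeeping of the coprimality condition $\gcd(11,q)=1$ and of the threshold $q \geq 33$ in part (c): one must check that every multiple of $11$ strictly larger than $1320$ reduces, after extracting the power of $11$, to a value of $q$ lying in the admissible region of whichever part of the proposition is invoked. This is routine, but it is exactly where the exceptional multiples $121$, $242$ and the four values $\{43,54,76,120\}$ must be confirmed to lie below $1320$.
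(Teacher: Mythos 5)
Your proof is correct and takes essentially the same route the paper intends: the corollary is read off directly from Proposition~\ref{P:para11} by splitting a multiple of $11$ as $11^k q$ with $\gcd(11,q)=1$ and checking that every case above $1320$ falls in the admissible region, with $1320 = 11\cdot 120$ the largest excluded value. The only caveat worth noting is that part (c) is phrased as a sufficient condition (``whenever''), so the attainment step $1320\notin\mathrm{Per}(F_4)$ strictly requires reading the exceptional set $\{43,54,76,120\}$ as genuine exclusions --- which is clearly the paper's intent (compare part (b), where the exceptions are stated explicitly), and which can also be verified independently from Theorem~\ref{Th_PerF4}, since $1320=10a+11b$ with $b\geq 2a+1$ forces $a\in\{11,22,33\}$, each of which violates $\gcd(a,b)=1$.
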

Once we have described the main properties concerning prime periods as well as a detailed study on which multiples of $11$ belong to $\mathrm{Per}(F_4),$ 
we are now interested in proving that the set $\mathcal{NP}:=\mathbb N\setminus \mathrm{Per}(F_4)$ is bounded. In fact our objective is to calculate the maximum of 
$\mathcal{NP}$.

The strategy consists in dividing the set of natural numbers, not multiple of $11$, in ten different classes, $\mathcal{C}_m:=\left\{10m + 11k, \, k\geq 0\right\}$, where we fix the value $m\in\{1,2,\ldots,10\}$. For each class $\mathcal{C}_m$, we will show that $\mathcal{NP}\cap\mathcal{C}_m$ is bounded, and from the inspection of each subset $\mathcal{NP}\cap\mathcal{C}_m$ we will deduce the maximum of $\mathcal{NP}.$ To develop the study of those classes, we use the following basic fact:
\begin{quote}\label{q:Fact}
Given a natural number $N$, $N\neq \dot{11}$, there exists a unique $m\in\{1,2,\ldots,10\}$ such that $N-10\cdot m=\dot{11}.$
\end{quote} 

As the previous part, the full development of every case $\mathcal{C}_m$ will be omitted. Observe that case $\mathcal{C}_1$ is direct since $n\in\mathcal{C}_1$ can be written as $n=10+11b$, where $a=1$ and $\gcd(b,1)=1$. To assure that $n$ is a period, it is necessary to require $b\geq 3$. Therefore, $n\in\mathrm{Per}(F_4)$ for all $n\geq 43$. Additionally, $n=10, 21, 32$ belong to $\mathcal{NP}$ and $N_1:=\max\left\{ \mathcal{NP}\cap\mathcal{C}_1\right\}=32.$ As an example of the technique used for the rest of cases, we will only present case $\mathcal{C}_9$. The reader interested in a full description of is referred to \cite{LiNi}.

\subsection{The class $\mathcal{C}_9$} To try to guess the maximum value in $\mathcal{NP}\cap \mathcal{C}_9 = \mathcal{NP}\cap \left\{90 + 11k, \, k\geq 0\right\}$, we take $a=9$, since $90 = 10\cdot a$. To get the necessary condition $k=b\geq 2a+1$, we must consider $b\geq 19$. 
Therefore, the first values of $\mathcal{C}_9$, for $b=0,1,\ldots,18$ are elements of $\mathcal{NP}$ given by $ n = 90 + 11b,$ with $b\in\{0,1,\ldots, 18\}$. So, we start our analysis with $b\geq 19$.

$\bullet$ If  $b \neq \dot{3}$, then directly $n=10\cdot 9 + 11\cdot b\in\mathrm{Per}(F_4)$ for all $b\geq 19$. 

$\bullet$ Assume that $b=3j$, with $j\geq 7$ and consider $n=10\cdot 9 + 11\cdot b = 10\cdot 9 + 11 \cdot (3j) = 10\cdot 31 + 11\cdot (3j-20).$

-- If, additionally, $3j-20$ is not a multiple of $31$, then $n$ is a period of $F_4$ if $3j-20\geq 63$ or $j\geq 28$ (notice that it must be $3j-20 \geq 2b'+1$ with $b'=31$). So, we must analyze the exceptions $7\leq j\leq 27$. For these values, we find that $321,$ $354,$ $387,$ $420,$ $453,$ $486,$ $519,$ $552,$ $585,$ $618,$ $684,$ $750,$ $816,$ $882,$ $915,$ $948$ belong to $\mathcal{NP}$, and on the other hand $\{651,$ $717,$ $783,$ $849,$ $981\}\subset \mathrm{Per}(F_4)$ with associate pairs $[a',b']$ given respectively by $[20,47],[20,53],[20,59],[20,71],$ $[31,64].$
 
-- If $3j-20=\dot{31}$, $j\geq 7$, then $j=17+31u,\, u\geq 0.$ In this point, we consider the general formulation of the decompositions in $\mathcal{C}_9$, 
  \begin{equation}\label{Eq:paraCaso9}
n=10\cdot (9+11r) +11\cdot (3j-10r), \, r\geq 0.
\end{equation} 

The sequence $\left\{x^{(9)}_r:r\geq 0\right\}=\left\{9+11r:r\geq 0\right\}=\left\{9,20,31,42,53,\ldots\right\}$ includes  prime numbers which are periods of $F_4$, for instance  $x^{(9)}_{8}=97$. In this way, if in the decompositions (\ref{Eq:paraCaso9}) we are not able to find a value $r=1,\ldots, 8$ such that $\gcd(9+11r,3j-10r)=1$, at least we know that $n$ can be divided by $3$, $31$,   as well as $97$, so $n=97\cdot t$, with $t\geq 93$. Then, Proposition~\ref{P:formacionP} gives $n\in\mathrm{Per}(F_4)$. For $r=8$ it is necessary that in (\ref{Eq:paraCaso9}) either $3j-10\cdot 8\geq 2\cdot (97)+1$ if $\gcd(97,3j-10\cdot 8)=1$ or $3j- 10\cdot 8\geq 1$, otherwise; in both cases, it suffices to take $j\geq 92$. Taking into account that $j=17+31u,$ $u\geq 0$, the inequality $j\geq 92$ is true when $u\geq 3$. In this case, the numbers $n$ will be in $\mathrm{Per}(F_4)$. For $u=0,1,2$, we have $n=651, 1674, 2697$, being $651$ and $2697$ in $\mathrm{Per}(F_4)$. Finally, $1674$ provides us the maximum of $\mathcal{NP}\cap\mathcal{C}_9$ (notice that $1674=10\cdot 9+11\cdot 144 = 10\cdot 20+11\cdot 134 =10\cdot 31+11\cdot 124 =10\cdot 42+11\cdot 114=10\cdot 53+11\cdot 104 =\cdots,$ and realize that $104<2\cdot 53 +1$. 
This concludes the part for a multiple of $3$, $b=3j$, and ends the casuistic. We have found that $$N_9:=\max\left\{\mathcal{NP}\cap \mathcal{C}_9\right\}= 1674.$$ 

\subsection{The final bound. A table of periods of $\mathrm{\mathbf{Per}}\mathbf{(F_4)}$} 
By collecting all the study developed for the sets $\mathcal{C}_m, 1\leq m\leq 10$ (see \cite{LiNi}), if we denote by $N_m$ the maximum value in 
$\mathcal{NP}\cap \mathcal{C}_m$, $1\leq m\leq 10$, we find:

\begin{tabular}{ccccc}
$N_1= 32,$ & $N_2=1560,$ & $N_3=1350,$ & $N_4=1140,$ &  $N_5=1260,$\\
$N_6=918,$ & $N_7=840,$ & $N_8=1026,$ & $N_9=1674,$ & $N_{10}=1332.$
\end{tabular}

On the other hand, the maximum value of $\mathcal{NP}$ being a multiple of $11$ is $N_{11}=1320$ as Proposition~\ref{P:para11} and Corollary~\ref{C:cota11} show. Therefore, $$M=\max\{\mathcal{NP}\}=\max\{N_m: 1\leq m \leq 11 \}=\boxed{1674}.$$ Now, with the help of a mathematical software and a few of patient we obtain all the periods in $\mathrm{Per}(F_4)$, which we gather in the following table:

\begin{center}
\resizebox{11cm}{!} {
\begin{tabular}{c|c}
\hline Intervals & Periods in $\mathrm{Per}(F_4)$ \\ \hline
 $n\in[1,100]$ & $1,8,11,43,54, 65, 75, 76, 87, 97, 98$ \\ \hline 
 $n\in[101,200]$ & $107, 109, 118, 119, 120, 131, 139, 140, 141, 142, 151, 153,$  \\ 
   $ $   & $161, 163, 164, 171, 173, 175, 182, 183, 184, 185, 186, 193, 197$  \\
 \hline
$n\in[201,300]$ &  $203, 204, 205, 206, 207, 208, 217, 219, 226, 227, 229, 230, 235, 237, 239,$\\
$ $ & $241, 246, 247, 248, 249, 250, 251, 252, 257, 259, 263, 267, 268, 269, 271,$ \\
$ $ & $ 272, 273, 274, 279, 281, 283, 285, 289, 290, 292, 293, 295, 296, 299$\\
\hline
$n\in[301,400]$ & $303, 305, 307, 311, 312, 313, 314, 315, 316, 317, 318, 323, 329, 331, 332, $\\
 $ $ & $333, 334, 335, 336, 337, 338, 339, 340, 343, 345, 347, 349, 351, 353, 355, $\\
 $ $ & $356, 358, 359, 361, 362, 363, 365, 367, 369, 371, 373, 374, 376, 377, 379, $\\
 $ $ & $381, 382, 383, 384, 385, 389, 391, 395, 396, 397, 398, 399, 400$\\
\hline 
$n\in[401,500]$ & $[401,500] \setminus \{408, 410, 412, 414, 416, 420, 423, 426, 430, $\\
 $ $ & $432, 434, 435, 436, 452, 453, 454, 455, 456, 458, 473, $\\
 $ $ & $474, 476, 478, 480, 485, 486, 490, 492, 496, 498, 500 \}$\\
\hline 
$n\in[501,600]$ & $[501,600] \setminus\{518,519,520,522,525,532,540,542,544,546,$ \\
 & $552,558,562,564,584,585,586,590,594,595,600 \}$ \\
\hline 
$n\in[601,700]$ & $[601,700]\setminus\{606,608,609,612,618,624,$ \\ & $628,650,672,678,684,686,690,700\}$\\
\hline 
$n\in[701,800]$ & $[701,800]\setminus\{702, 705, 710, 738, 744, 750, 756\}$\\
\hline 
$n\in[801,900]$ & $[801,900]\setminus\{804, 810, 820, 826, 836, 840, 870, 876, 882\}$\\
\hline 
$n\in[901,1000]$ & $[901,1000]\setminus\{915, 918, 920, 930, 936, 942, 948, 980, 988\}$\\
\hline 
$n\in[1001,1100]$ & $[1001,1100]\setminus\{1002, 1008, 1020, 1026, 1030, 1068, 1074\}$\\
\hline 
$n\in[1101,1300]$ & $[1101,1300]\setminus\{1134, 1140, 1200,1260, 1266, 1274, 1288\}$\\
\hline 
$n\in[1301,1700]$ & $[1301,1700]\setminus\{1320, 1332, 1350,1560, 1674\}$\\
\hline 
$n > 1674$ & All the values\\ \hline
\end{tabular}
}
\end{center}

\section{Further developments. Open questions} \label{S:Further}

Once we have studied in detail the periodic character of the solutions of Equation~(\ref{Eq:G4}), a problem that is still open is to describe $\mathrm{Per}(F_k)$ for $k\geq 5$. We believe that the techniques developed in the present paper could be helpful for a deeply analysis of the problem. As a first step, it would be instructive to look at the case $k=5$ and to try, at least, to prove if the set $\mathbb N \setminus \mathrm{Per}(F_k)$ is bounded. In this sense, we have realized computer simulations that indicate that the set of periods it is also determined by a linear combination of certain numbers. For instance, if we consider Equation (\ref{Eq:Gk}) for $k=5$, the period of every periodic sequence that we have generated is a linear combination of $13$ and $14$, that is, $N=13\cdot a + 14 \cdot b$ with $a, b \in \mathbb{N}$ verifying $\gcd(a,b)=1$. Moreover, for $k=6$, the periods that have appeared are of the form $N=16\cdot a + 17 \cdot b$ with $a, b \in \mathbb{N}$ verifying $\gcd(a,b)=1$. Realize that in both cases, as it happens for $k=4$, the numbers that determine the linear combinations are $3k-1$ and $3k-2$. Therefore, such simulations, jointly with Propositions \ref{Prop_equi} and \ref{Remark_Per2}, motivate us to state the following conjecture:
\begin{conjecture}
Consider Equation (\ref{Eq:Gk}) and let $\mathrm{Per}(F_k)$ be its set of periods. Then, $$\mathrm{Per}(F_k) \subset  \{1, \frac{1}{2} (3-(-1)^k), 2k, 3k-1\} \cup \{ (3k-2)\cdot a + (3k-1) \cdot b \ | \gcd(a,b) = 1 \}.$$ 
\end{conjecture}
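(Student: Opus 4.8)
The plan is to run, for general $k$, the same strategy that settled the case $k=4$, isolating the single structural fact whose proof is the true obstacle. Throughout I normalize a $p$-periodic solution so that $x_1=\max\{x_j:1\le j\le p\}\ge 0$, which is legitimate by Proposition~\ref{P:maxNoNeg}, and I discard the trivial orbit $\mathbf{\overline{0}}$. The four exceptional values are already accounted for: $1$ is the equilibrium (Proposition~\ref{Prop_equi}), the value $\tfrac12(3-(-1)^k)$ equals $2$ precisely when $k$ is odd and occurs exactly then (Proposition~\ref{Remark_Per2}(b)), $2k$ is realized for every $k\ge 4$ (Proposition~\ref{Remark_Per2}(a)), and $3k-1$ is Golomb's monotonic period. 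So the real content is to show that every remaining period is of the form $(3k-2)a+(3k-1)b$ with $\gcd(a,b)=1$, and, since the claim is only an inclusion, I need only the necessary direction (I do not have to prove realizability, nor any $b\ge 2a+1$ restriction).

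The first and decisive step is to prove the general analogue of Proposition~\ref{prop_diag}: if $x_1=\max_j x_j>0$, then the maximum recurs after exactly $3k-2$ or $3k-1$ iterations, that is $x_{3k-1}=x_1$ or $x_{3k}=x_1$, with the subsequent $k-1$ terms nonnegative so that the argument can be iterated from the new leading tuple. Granting this \emph{return lemma}, the orbit of the leading tuple advances in discrete blocks, each of length $3k-2$ or $3k-1$; concatenating the blocks that make up one full period gives immediately $p=(3k-2)a+(3k-1)b$ for some integers $a,b\ge 0$, where $a$ and $b$ count the short and long blocks. This is exactly the general shape of the route and Condition~U bookkeeping carried out for $k=4$.

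Two refinements then finish the inclusion. First, periodicity may be attained in the interior of a block rather than at a block boundary; these are the \emph{controversial} and \emph{intersection} configurations of the $k=4$ analysis, and one must check that each such premature return either collapses to one of the exceptional orbits listed above or, after an equivalence reduction in the spirit of Definition~\ref{Def_equivalence}, again yields a genuine block decomposition $(3k-2)a+(3k-1)b$. Second, to obtain $\gcd(a,b)=1$ I would argue as in Proposition~\ref{nec}: if $N$ is the \emph{prime} period and $d=\gcd(a,b)>1$, then writing $a=da'$, $b=db'$ and rescaling the defining linear relation among $x_1,\dots,x_k$ (the analogue of $x_1=\tfrac{q-p}{p}(x_4-x_3)$ in Proposition~\ref{prop_gcd}) produces a genuinely periodic solution of period $(3k-2)a'+(3k-1)b'=N/d<N$, contradicting minimality. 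Eventual periodicity is automatic by Proposition~\ref{Prop_eventually}, so no separate convergence argument is needed.

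The hard part is unquestionably the return lemma. For $k=4$ it rested on the exhaustive five-case partition $C_1,\dots,C_5$ of the orderings of $x_2,x_3,x_4$, each traced by hand for $10$ or $11$ steps. For general $k$ the number of relative orderings of $x_2,\dots,x_k$ grows like $(k-1)!$, so a direct enumeration is hopeless and a conceptual replacement is required. I would look for an invariant or potential controlling how the nonnegative front of length $k$ and the induced nonpositive block propagate under $F_k$, aiming to show that the maximum value always reappears after a total of $3k-2$ or $3k-1$ steps and never any other length. Proving that only these two block lengths can occur, uniformly in $k$, is precisely the step that is still open and is why the statement is posed as a conjecture; once the return lemma is in place, the remaining combinatorial and number-theoretic steps above are expected to transcribe from the $k=4$ treatment with only notational changes.
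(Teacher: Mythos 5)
You have not produced a proof, and in fact you say so yourself: your entire plan is conditional on the \emph{return lemma} --- that for a periodic orbit with $x_1=\max_j x_j>0$ the maximum recurs after exactly $3k-2$ or $3k-1$ iterations with a nonnegative leading $k$-tuple --- which is precisely the general analogue of Proposition~\ref{prop_diag}, and you concede in your final paragraph that this step is open. That concession is correct, and it is exactly why the paper states this result as a conjecture rather than a theorem: the paper offers no proof either, supporting the statement only by the exceptional periods already established (Propositions~\ref{Prop_equi} and~\ref{Remark_Per2}, plus Golomb's monotonic period $3k-1$) and by computer simulations for $k=5$ and $k=6$, for which the observed periods have the form $13a+14b$ and $16a+17b$ respectively with $\gcd(a,b)=1$. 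Since the block decomposition $p=(3k-2)a+(3k-1)b$, and everything after it, hangs on the return lemma, the gap sits at the first substantive step of your argument; no invariant or potential controlling the propagation of the nonnegative front is actually constructed, and as you note the five-case enumeration of Proposition~\ref{prop_diag} has no direct analogue once the $(k-1)!$ orderings of $x_2,\dots,x_k$ must be handled.

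Even granting the return lemma, two of your ``refinements'' are understated. First, the interior-return analysis (your controversial and intersection configurations) is not a checkable afterthought: for $k=4$ it occupies most of Section~\ref{S:PerF4}, requiring the equivalence reductions of Definition~\ref{Def_equivalence} and an exhaustive sweep of every position $C_{i,j}$ at which periodicity could close inside a block; a priori an interior return for general $k$ could yield a period outside the claimed set, and nothing in your sketch excludes this. Second, the coprimality step does not transcribe automatically: Proposition~\ref{nec} rests on the explicit relation $x_1=\frac{b-a}{a}(x_4-x_3)$ extracted from the concrete route diagram and on the realizability statement of Proposition~\ref{prop_gcd}, whose proof depends on the precise loop structure ($\delta_i$ bounds, the dichotomy $\delta_i\in\{[\frac{q-p}{p}],[\frac{q-p}{p}]+1\}$, and the count $\alpha=r$); for general $k$ one would first need the full analogue of the route graph, which again presupposes the return lemma. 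You are right on the small points --- only the inclusion (necessary direction) is needed, and the paper itself doubts that a $b\ge 2a+1$-type restriction persists for $k>4$ --- but these do not close the gap: what you have written is a sound research programme that isolates the correct obstacle, not a proof of the statement.
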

Observe that we do not consider the equality since we hardly believe that an extra restriction as $b\geq 2a +1$ for $k=4$ must be considered. 

Moreover, a complete description of the elements of $\mathrm{Per}(F_k)$ is related to a coin problem with additional restrictions on the coefficients of the linear combinations, namely, $\gcd(a,b)= 1$ and certain linear inequalities on $a$ and $b$. As far as we know, this modified coin problem has not been previously studied in the literature. Therefore, some general interesting questions for algebrists would be: to study the boundedness character of the complementary of semigroups generated by two coprimes numbers, $p$ and $q$, whose elements are originated by combinations $a\cdot p+b\cdot q$ being $a, b$ natural integers satisfying $\gcd(a, b) = 1$, as well as to find in the bounded case a formula for the biggest positive integer not representable with the assumptions of being coprimes $a, b$; to extend the previous problem for $p, q$ not being necessarily coprime; to analyse the problem of adding extra conditions to $a, b$ apart from the fact of being coprime, as for example, linear inequalities.

Furthermore, the problem of studying the periodic structure of Equation~(\ref{Eq:Gk}) when we change the constant $0$ in the maximum function for a general positive number is open too. To this respect, in \cite{Barb} the cases $1,-1$ are considered for $k=2,3$.

Also, (\ref{Eq:Gk}) can be transformed into $z_{n+k} = \frac{\max\{z_{n+k-1}, z_{n+k-2}, \ldots, z_{n+1}, 1 \}}{z_n}$ under the change of variables $x_n = \ln (z_n)$. Related to this equation, Grove and Ladas proposed the following conjecture in \cite[Conjecture 2.2]{GroveL05}.

\begin{conjecture}
Assume $A \in (0,\infty)$. Show that no positive non-equilibrium solution of the equation $x_{n+1} = \frac{\max\{A, x_n, x_{n-1}\} }{x_{n-2}}$, has a limit. Extend and generalize.
\end{conjecture}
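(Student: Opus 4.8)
The plan is to transfer the problem, via the logarithmic substitution already exploited in Section~\ref{S:Further}, to the additive recursion of this paper, and then to control the limit through the equilibrium structure. Writing the equation of the conjecture as $x_{n+1}x_{n-2}=\max\{A,x_n,x_{n-1}\}$, a positive equilibrium $\bar x$ must satisfy $\bar x^{2}=\max\{A,\bar x\}$, whose unique positive solution is $\bar x=1$ when $A\leq 1$ and $\bar x=\sqrt{A}$ when $A>1$. If a positive solution converged to a limit $L$, then $L\in(0,\infty)$ (the value $L=0$ is impossible, since the numerator stays $\geq A>0$ while the denominator would tend to $0$), and passing to the limit gives $L^{2}=\max\{A,L\}$, so $L=\bar x$. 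Hence it suffices to prove that convergence to $\bar x$ forces the solution to be identically $\bar x$. Setting $w_n=\ln x_n$ turns the recursion into the max-plus equation $w_{n+1}=\max\{w_n,w_{n-1},c\}-w_{n-2}$ with $c=\ln A$, whose equilibrium is $\bar w=0$ for $c\leq 0$ and $\bar w=c/2$ for $c>0$; this is Equation~(\ref{Eq:Gk}) with $k=3$ perturbed by the constant $c$ inside the maximum.

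For $A=1$ (so $c=0$) the equation is exactly Equation~(\ref{Eq:Gk}) with $k=3$, which is globally periodic with $\mathrm{Per}(F_3)=\{1,2,8\}$; a convergent periodic sequence is constant, so $w_n\equiv 0$ and $x_n\equiv 1$. For $A>1$ (so $c>0$) I would linearise near the equilibrium: because $\bar w=c/2<c$, convergence $w_n\to c/2$ provides an $N$ with $w_n<c$ for $n\geq N$, whence $\max\{w_n,w_{n-1},c\}=c$ and the tail satisfies the purely linear relation $w_{n+1}+w_{n-2}=c$. The associated characteristic roots are sixth roots of unity, so every such solution is $c/2$ plus a sequence of period dividing $6$; convergence forces that periodic part to vanish, giving $w_n=c/2$ for all large $n$, and the invertibility of the map (via $w_{n-2}=\max\{w_n,w_{n-1},c\}-w_{n+1}$) propagates $w_n=c/2$ to every $n$. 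This disposes of the cases $A\geq 1$.

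The remaining and genuinely difficult case is $A<1$ (so $c<0$), where $\bar w=0$. As $w_n\to 0$, eventually $w_n>c$, the floor becomes inactive, and the tail obeys the floor-free, positively homogeneous recursion $w_{n+1}=\max\{w_n,w_{n-1}\}-w_{n-2}$. This branch does not collapse to a single linear relation and, as direct iteration shows, it is not globally periodic, so the argument used for $A=1$ is unavailable. My proposed route is to use that the state map $(a,b,d)\mapsto(b,d,\max\{b,d\}-a)$ is a homeomorphism of $\mathbb{R}^{3}$, homogeneous of degree one and fixing the origin: I would look for a continuous, degree-one homogeneous first integral that is positive off the origin and constant along orbits, so that $w_n\to 0$ would force it to vanish and hence $w\equiv 0$; absent an explicit invariant, the alternative is a symbolic analysis of the finitely many linear branches of the piecewise-linear map near $\bar w=0$, showing that no admissible branch word sustains a nonconstant orbit whose state tends to $0$. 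I expect this case to be the main obstacle, and I note that it is precisely the problem, raised in Section~\ref{S:Further}, of replacing the constant $0$ in the maximum by a nonzero one, for which the methods of the present paper do not transfer directly.
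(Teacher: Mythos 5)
This statement is not a result proved in the paper at all: it is Grove and Ladas's conjecture, quoted verbatim in Section~\ref{S:Further} as an open problem, and the paper offers no proof of it --- only the remark that the oscillatory character of solutions and an invariant function (see \cite{Barb}) might be useful, together with the observation that replacing the constant $0$ in the maximum of Equation~(\ref{Eq:Gk}) by another constant is itself open. So your proposal cannot be compared with a proof in the paper; it must stand on its own, and it does not yet. What you do prove is correct and cleanly executed: the equilibrium computation ($\bar x=1$ for $A\le 1$, $\bar x=\sqrt A$ for $A>1$), the exclusion of the limit $L=0$, the logarithmic conjugacy to $w_{n+1}=\max\{w_n,w_{n-1},c\}-w_{n-2}$ with $c=\ln A$, the case $A=1$ via global periodicity of $F_3$ (a convergent periodic sequence is constant), and the case $A>1$, where $\bar w=c/2<c$ forces the tail to satisfy the linear relation $w_{n+1}+w_{n-2}=c$, hence $w_{n+6}=w_n$, hence constancy of the tail, which the invertibility of the recursion then propagates backwards. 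These two cases are genuinely settled.

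The genuine gap is the case $0<A<1$ (i.e.\ $c<0$), which you yourself flag as unresolved: there you only \emph{propose} to find a homogeneous first integral, or to perform a symbolic analysis of the piecewise-linear branches, and carry out neither. Since the conjecture asks for the conclusion for every $A\in(0,\infty)$, the proposal is not a proof. Two remarks on that case. First, your parenthetical claim that the floor-free recursion $w_{n+1}=\max\{w_n,w_{n-1}\}-w_{n-2}$ ``is not globally periodic'' is unsubstantiated and, as stated, misleading: direct iteration produces periodic orbits of different periods (for instance, initial data $(0,1,t)$ with $0<t<1$ yields period $8$, while $(1,2,3)$ yields period $26$), so there is certainly no single universal period, but whether \emph{every} orbit is periodic is exactly the question you would need to settle rather than dismiss --- if global periodicity did hold, your own $A=1$ argument (convergent $+$ periodic $\Rightarrow$ constant, then backward propagation) would immediately close this case. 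Second, by homogeneity you may normalize $c<0$ to $c=-1$ (replace $w_n$ by $w_n/|c|$), which is precisely the case the paper says is treated in \cite{Barb} for $k=3$; that reference, or the methods of \cite{CsLa2001}, is the natural route toward either global periodicity of the tail equation or an invariant excluding nonconstant convergent orbits. Until that is done, you have proved the conjecture only for $A\ge 1$.
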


We believe that the oscillatory character of the solutions and the existence of an invariant function (consult \cite{Barb}) will be useful tools for extending and generalizing the conjecture.

\section*{Acknowledgements}

We sincerely thank Prof. Pedro A. Garc\'{i}a S\'{a}nchez from University of Granada, Spain, and Prof. Christopher O'Neill from San Diego Statal University, CA, USA, for useful conversations on the modified coin problem. 

This paper has been partially supported by Grant MTM2017-84079-P (AEI/FEDER,UE), Ministerio de Ciencias, Innovaci\'{o}n y Universidades, Spain.

\end{document}